\tikzstyle{int}=[draw, fill=blue!20, minimum size=2em]
\tikzstyle{dot}=[circle, draw, fill=blue!20, minimum size=2em]
\tikzstyle{init} = [pin edge={to-,thin,black}]
\newcommand{\Hyper}{{\rm Hypergeometric}}
\newcommand{\stleq}{\stackrel{\rm s.t.}{\leq}}
\newcommand{\PSD}{\sfS}
\newcommand{\eg}{e.g.\xspace}
\newcommand{\ie}{i.e.\xspace}
\newcommand{\iid}{i.i.d.\xspace}
\newcommand{\ones}{\mathbf 1}
\newcommand{\reals}{{\mathbb{R}}}
\newcommand{\integers}{{\mathbb{Z}}}
\newcommand{\naturals}{{\mathbb{N}}}
\newcommand{\supp}{{\rm supp}}
\newcommand{\eexp}{{\rm e}}
\newcommand{\diff}{{\rm d}}
\newcommand{\rank}{\mathop{\sf rank}}
\newcommand{\sr}{\mathop{\sf sr}}
\newcommand{\Tr}{\mathop{\sf Tr}}
\newcommand{\Expect}{\mathbb{E}}
\newcommand{\expect}[1]{\mathbb{E}\left[ #1 \right]}
\newcommand{\eexpect}[1]{\mathbb{E}[ #1 ]}
\newcommand{\expects}[2]{\mathbb{E}_{#2}\left[ #1 \right]}
\newcommand{\Prob}{\mathbb{P}}
\newcommand{\prob}[1]{{ \mathbb{P}\left\{ #1 \right\} }}
\newcommand{\eqdistr}{{\stackrel{\rm (d)}{=}}}
\newcommand{\iiddistr}{{\stackrel{\text{\iid}}{\sim}}}
\newcommand{\Bern}{\text{Bern}}
\newcommand{\Binomial}{\text{Binom}}
\newcommand{\Binom}{\text{Binom}}
\newcommand{\Th}{{^{\rm th}}}
\newcommand{\diverge}{\to \infty}
\theoremstyle{plain}
\newtheorem{lemma}{Lemma}
\newtheorem{theorem}{Theorem}
\theoremstyle{definition}
\newtheorem{definition}{Definition}
\newtheorem{remark}{Remark}
\theoremstyle{plain}
\newcommand{\ceil}[1]{{\left\lceil {#1} \right \rceil}}
\newcommand{\norm}[1]{\left\|{#1} \right\|}
\newcommand{\fnorm}[1]{\|#1\|_{\rm F}}
\newcommand{\opnorm}[1]{\left\| #1 \right\|_2}
\newcommand{\Opnorm}[1]{\| #1 \|_2}
\newcommand{\iprod}[2]{\left \langle #1, #2 \right\rangle}
\newcommand{\Iprod}[2]{\langle #1, #2 \rangle}
\newcommand{\indc}[1]{{\mathbf{1}{\left\{{#1}\right\}}}}
\def\innergetnumber#1[#2]#3{#2}
\def\getnumber{\expandafter\innergetnumber\jobname}
\newcommand{\bbS}{{\mathbb{S}}}
\newcommand{\sfS}{{\mathsf{S}}}
\newcommand{\calE}{{\mathcal{E}}}
\newcommand{\calF}{{\mathcal{F}}}
\newcommand{\calM}{{\mathcal{M}}}
\newcommand{\tb}{\tilde{b}}
\newcommand{\tu}{{\tilde{u}}}
\newcommand{\tv}{{\tilde{v}}}
\newcommand{\tB}{{\tilde{B}}}
\newcommand{\tE}{{\tilde{E}}}
\newcommand{\tH}{{\tilde{H}}}
\newcommand{\tI}{{\tilde{I}}}
\newcommand{\tJ}{{\tilde{J}}}
\newcommand{\tM}{{\tilde{M}}}
\newcommand{\tS}{{\tilde{S}}}
\newcommand{\tU}{{\tilde{U}}}
\newcommand{\tX}{{\tilde{X}}}
\newcommand{\comp}[1]{{#1^{\rm c}}}
\newcommand{\ntok}[2]{{#1,\ldots,#2}}
\newcommand{\pth}[1]{\left( #1 \right)}
\newcommand{\qth}[1]{\left[ #1 \right]}
\newcommand{\sth}[1]{\left\{ #1 \right\}}
\newcommand{\bpth}[1]{\Bigg( #1 \Bigg)}
\newcommand{\bqth}[1]{\Bigg[ #1 \Bigg]}
\newcommand{\fracd}[2]{\frac{\diff #1}{\diff #2}}
\newcommand{\TV}{{\rm TV}}
\newcommand{\stepa}[1]{\overset{\rm (a)}{#1}}
\newcommand{\stepb}[1]{\overset{\rm (b)}{#1}}
\newcommand{\stepc}[1]{\overset{\rm (c)}{#1}}
\newcommand{\identity}{\mathbf{I}}
\newcommand{\allones}{\mathbf{J}}
\author{T.~Tony Cai and Yihong Wu\thanks{T.T.~Cai is with the Statistics Department, The Wharton School, University of Pennsylvania, Philadelphia, PA 19104, USA, \url{tcai@wharton.upenn.edu}. The research of T.T.~Cai was supported in part by NSF Grant DMS-1712735 and NIH Grant R01 GM-123056. Y.~Wu is with the Department of Statistics and Data Science, Yale University, New Haven, CT, \url{yihong.wu@yale.edu}. 
The research of Y.~Wu was supported in part by the NSF Grant IIS-1447879, CCF-1527105, and an NSF CAREER award CCF-1651588.
}}
\title{
Statistical and Computational Limits for Sparse Matrix Detection
}
\date{
\today
}
\begin{document}
\DeclareGraphicsExtensions{.pgf}
\graphicspath{{figures/}}
%\graphicspath{{../1/}{../2/}}

\maketitle

\let\oldthefootnote\thefootnote
\renewcommand{\thefootnote}{\fnsymbol{footnote}}
%\footnotetext[1]{July, 2011 -- \today.}
%\footnotetext[2]{\url{kinbote@example.com}}
%\footnotetext[3]{The results in \prettyref{sec:noisy} was presented in part in \cite{noisyCS.poster}.}
\let\thefootnote\oldthefootnote

\begin{abstract}

This paper investigates the fundamental limits for detecting a high-dimensional sparse matrix contaminated by white Gaussian noise from both the statistical and computational perspectives. We consider $p\times p$ matrices whose rows and columns are individually $k$-sparse. We provide a tight characterization of the statistical and computational limits for sparse matrix detection, which precisely describe when achieving optimal detection is easy, hard, or impossible, respectively. Although the sparse matrices considered in this paper have no apparent submatrix structure and the corresponding estimation problem has no computational issue at all, the detection problem has a surprising computational barrier when the sparsity level $k$ exceeds the cubic root of the matrix size $p$: attaining the optimal detection boundary is computationally at least as hard as solving the planted clique problem. 

The same statistical and computational limits also hold in the sparse covariance matrix model, where each variable is correlated with at most $k$ others. A key step in the construction of the statistically optimal test is a structural property for sparse matrices, which can be of independent interest.
\end{abstract}

\tableofcontents

\section{Introduction}
	\label{sec:intro}

The problem of detecting sparse signals arises frequently in a wide range of fields and has been particularly well studied in the Gaussian sequence setting (cf.~the monograph \cite{IS03}). For example, detection of unstructured sparse signals under the Gaussian mixture model was studied in \cite{Ingster97,DJ.HC} for the homoskedastic case and in \cite{CJJ11} for the heteroscedastic case, where sharp  detection boundaries were obtained and adaptive detection procedures proposed.   Optimal detection of structured signals in the Gaussian noise model has also been investigated in \cite{CDH05, CCP11,Cai2014ShortSeg}. One common feature of these vector detection problems is that the optimal statistical performance can always be achieved by computationally efficient procedures such as thresholding or convex optimization.

Driven by contemporary applications, much recent attention has been devoted to inference for high-dimensional matrices, including covariance matrix estimation, principle component analysis (PCA), image denoising, and multi-task learning,  all of which rely on detecting or estimating high-dimensional matrices with low-dimensional structures such as low-rankness or sparsity. 
	For a suite of matrix problems, including sparse PCA \cite{berthet2013lowerSparsePCA}, biclustering \cite{balakrishnan2011tradeoff,MW13b,wang2016statistical,Cai2017Submatrix}, sparse canonical correlation  analysis (CCA) \cite{gao2017sparse} and community detection \cite{HWX14}, 
	a new phenomenon known as \emph{computational barriers} has been recently discovered, which shows that in certain regimes attaining the statistical optimum is computationally intractable, unless the planted clique problem can be solved efficiently.\footnote{The planted clique problem 
	\cite{Alon98} refers to detecting or locating a clique of size $o(\sqrt{n})$ planted in the Erd\"os-R\'enyi graph $G(n,1/2)$.
Conjectured to be computationally intractable \cite{Jer92,Feldman2012statAlg}, this problem has been frequently used as a basis for quantifying hardness of average-case problems \cite{Hazan2011Nash,alon2007testing}.}
	In a nutshell, the source of computational difficulty in the aforementioned problems is their \emph{submatrix sparsity}, where the signal of interests is concentrated on a submatrix within a large noisy matrix. This combinatorial structure provides a direct connection to, and allows these matrix problems  
to be reduced in polynomial time from, the planted clique problem, thereby creating computational gaps for not only the detection but also support recovery and estimation.
	
	In contrast, another sparsity structure for matrices postulates the rows and columns are individually sparse, which has been well studied in  covariance matrix estimation \cite{BJ08b,karoui2008operator,CZ12,fan2015estimation}. The motivation is that in many real-data applications each variable is only correlated with a few others. Consequently, each row and each column of the covariance matrix are individually sparse but, unlike sparse PCA, biclustering, or group-sparse regression, their support sets need not be aligned.
	Therefore this sparsity model does not postulate any submatrix structure of the signal; indeed, it has been shown for covariance matrix estimation that entrywise thresholding of the sample covariance matrix proposed in \cite{BJ08b} attains the minimax estimation rate \cite{CZ12}.
	
	The focus of the present paper is to understand the fundamental limits of detecting sparse matrices from both the statistical and computational perspectives.
	While achieving the optimal estimation rate does not suffer from any computational barrier, it turns out the detection counterpart does when and only when the sparsity level exceeds the \emph{cubic root} of the matrix size. This is perhaps surprising because the sparsity model itself does not enforce explicitly
	any submatrix structure, which has been responsible for problems such as sparse PCA to be reducible from the planted clique. Our main result is a tight characterization of the statistical and computational limits of detecting sparse matrices  in both the Gaussian noise model and the covariance matrix model, which precisely describe when achieving optimal detection is easy, hard, and impossible, respectively.

	%In contrast to the estimation problem, the detection problem for sparse matrices is less well-understood compared to its vector counterpart, which has been thoroughly investigated in the additive Gaussian noise model (cf.~\cite{IS03,johnstone.notes}).
	%Furthermore, it has been shown that for certain problems with submatrix sparsity, such as sparse PCA \cite{berthet2013lowerSparsePCA}, biclustering \cite{MW13b,wang2016statistical,Cai2017Submatrix}, sparse canonical correlation  analysis (CCA) \cite{gao2017sparse} and community detection \cite{HWX14}, achieving statistical optimal detection or estimation is computationally intractable.	The focus of the present paper is to understand the statistical and computational limits of detecting matrices with row-wise and column-wise sparsity in both the normal mean model and the covariance matrix model.
	
			\subsection{Setup}
	\label{sec:setup}

	We start by formally defining the sparse matrix model:
	%In this paper we focus on sparse matrices with row-wise and column-wise sparsity:
	\begin{definition}
	\label{def:sparsemat}
		We say a $p\times p$ matrix $M$ is \emph{$k$-sparse} if all of its rows and columns are $k$-sparse vectors, i.e., with no more than $k$ non-zeros. 
	Formally, denote the $i\Th$ row of $M$ by $M_{i*}$ and the $i\Th$ column by $M_{*i}$. The following parameter set
\begin{equation}
\calM(p,k)	= \{M \in \reals^{p \times p}: \|M_{i*}\|_0 \leq k, \|M_{*i}\|_0 \leq k, \forall i \in [p]\}.
	\label{eq:sparsemat}
\end{equation}
denotes the collection of all $k$-spares $p\times p$ matrices, where $\|x\|_0 \triangleq \sum_{i\in[p]} \indc{x_i\neq 0}$ for $x\in\reals^p$.	
	\end{definition}

%In denoising applications, the goal is to detect the presence of a sparse matrix in noise. 
Consider the following ``signal $+$ noise" model, where we observe a sparse matrix contaminated with Gaussian noise:
\begin{equation}
X = M + Z
\label{eq:GLM}
\end{equation}
where $M$ is a $p \times p$ unknown mean matrix, and $Z$ consists of \iid~entries normally distributed as $N(0,\sigma^2)$.
Without loss of generality, we shall assume that $\sigma=1$ throughout the paper.

Given the noisy observation $X$, the goal is to test whether the mean matrix is zero or a $k$-sparse nonzero matrix, measured in the spectral norm.
Formally, we consider the following hypothesis testing problem:
\begin{equation}
	\begin{cases}
	H_0: & M = 0 \\
%	H_1: & M \in \{0,1\}^{p\times p}, M \in \Theta_{p,k}, \|M\|_2 \geq \alpha k.
H_1: & \|M\|_2 \geq \lambda, \text{ $M$ is $k$-sparse}
	\end{cases}
	\label{eq:htsparse}
\end{equation}
where the mean matrix $M$ belongs to the parameter space
	\begin{equation}
\Theta(p,k,\lambda)	=  	\{M\in \reals^{p \times p}: M \in \calM(p,k), \|M\|_2 \geq \lambda\}.
	\label{eq:sparsemean}
\end{equation}
Here we use the spectral norm $\|\cdot\|_2$, namely, the largest singular value, to measure the signal strength under the alternative hypothesis. It turns out that if we use the Frobenius norm to define the alternative hypothesis, the sparsity structure does not help detection, in the sense that, the minimal $\lambda$ required to detect $1$-sparse matrices is within a constant factor of that in the 
non-sparse case; furthermore, the matrix problem collapses to its vector version (see \prettyref{sec:altf} for details).

For covariance model, the counterpart of the detection problem \prettyref{eq:sparsemean} is the following.
Consider the Gaussian covariance model, where we observe $n$ independent samples drawn from the $p$-variate normal distribution $N(0, \Sigma)$ with an unknown covariance matrix $\Sigma$. 
	%The motivation for sparse covariance matrices is that in many applications each coordinate is only correlated with a few others. 
	%Therefore each row of the covariance matrix $\Sigma$ has at most non-zero off-diagonal entries.
		%%this model is that in many applications each coordinates is only correlated with at most $k$ others.
%Unlike problems such as sparse PCA, submatrix localization, group-sparse regression, we do not assume any additional other structure (\eg, common support) beside the individual sparsity of rows.
	%In the sparse covariance matrix model, each coordinate is correlated with at most $k$ others. Therefore each row of the covariance matrix $\Sigma$ has at most non-zero 
	%off-diagonal entries.
In the sparse covariance matrix model, each coordinate is correlated with at most $k$ others. Therefore each row of the covariance matrix $\Sigma$ has at most $k$ non-zero off-diagonal entries. This motivates the following detection problem:
\begin{equation}
	\begin{cases}
	H_0: & \Sigma = \identity \\
%	H_1: & M \in \{0,1\}^{p\times p}, M \in \Theta_{p,k}, \|M\|_2 \geq \alpha k.
H_1: & \|\Sigma-\identity\|_2 \geq \lambda, \text{ $\Sigma-\identity$ is $k$-sparse}
	\end{cases}
	\label{eq:htsparse1}
\end{equation}
Under the null hypothesis, the samples are pure noise; under the alternative, there exists at least one significant factor and the entire covariance matrix is $k$-sparse. 
The goal is to determine the smallest $\lambda$ so that the factor can be detected from the samples.

\subsection{Statistical and computational limits}

For ease of exposition, let us focus on the additive Gaussian noise model and consider the following asymptotic regime, wherein the sparsity and the signal level grow polynomially in the dimension as follows:
\[
k = p^{\alpha} \quad \mbox{and} \quad \lambda=p^{\beta}
\]
 with $\alpha \in [0,1]$ and $\beta>0$ held fixed and $p\diverge$. \prettyref{thm:main} in Section \ref{sec:main} implies that the critical exponent of $\lambda$ behaves according to the following piecewise linear function:
\[
\beta^* = \begin{cases}
\alpha & \alpha \leq \frac{1}{3} \\
\frac{1+\alpha}{4} & \alpha \geq \frac{1}{3} 
\end{cases}
\]
in the sense that 
if $\beta > \beta^*$, 
%$\epsilon^*(p,k,\lambda)\to 0$, i.e., 
there exists a test that achieves vanishing probability of error of detection uniformly over all $k$-sparse matrices; 
conversely, if $\beta < \beta^*$, 
%$\epsilon^*(p,k,\lambda)\to 1$, i.e., 
no test can outperform random guessing asymptotically.
%More precise upper and lower bounds are given in \prettyref{sec:main}, which differ by at most a factor of $O(\sqrt{\frac{\log p}{\log\log p}})$:

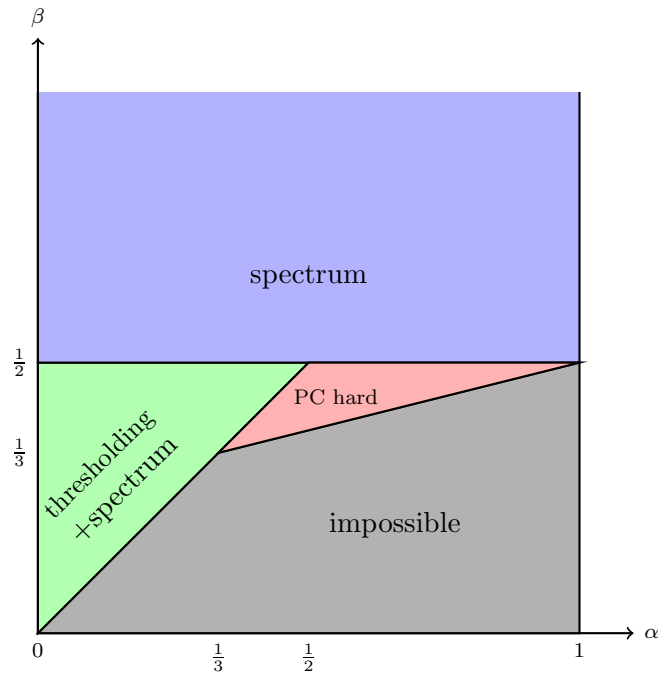
\begin{figure}[ht]
\centering
\begin{tikzpicture}[scale=0.9,domain=0:1,xscale=8,yscale=8, thick]
\draw[->] (0,0) node [below] {\scriptsize 0} -- (0,1/3) node[left] {\scriptsize $\frac{1}{3}$}-- (0,1/2) node[left] {\scriptsize $\frac{1}{2}$}  
%-- (0,1) node[left] {\scriptsize 1} 
-- (0,1.1) node[above] {\scriptsize $\beta$};
\draw[->] (0,0) -- (1/3,0) node[below] {\scriptsize $\frac{1}{3}$}-- (1/2,0) node[below] {\scriptsize $\frac{1}{2}$}-- (1,0) node[below] {\scriptsize 1} -- (1.1,0) node[right] {\scriptsize $\alpha$};
\fill[fill=black!30!, draw=black] (0,0) -- (1/3,1/3) -- (1,1/2) -- (1,0) -- cycle; % impossible
\filldraw[fill=green!30!, draw=black] (0,0) -- (1/2,1/2) -- (0,1/2) -- cycle; % easy
%\filldraw[fill=blue!30!, draw=black] (0,1/2) -- (1,1/2) -- (1,1) -- (0,1)-- cycle; % possible
\fill[fill=blue!30!] (0,1/2) -- (1,1/2) -- (1,1) -- (0,1)-- cycle; % possible
\draw (0,1)--(0,1/2) -- (1,1/2) -- (1,1); % possible
\filldraw[fill=red!30, draw=black] (1/3,1/3) -- (1/2,1/2) -- (1,1/2) -- cycle; % PC hard
%\filldraw[fill=black!20!white, draw=black] (1/3,1/3) -- (1/2,1/2) -- (3/5,2/5) -- cycle; % PC hard
\node at (.8,.2) [left,rotate=0,align=center] {impossible};
%\node at (0.03,0.18) [right,align=center,rotate=45] {\small thresholding\\$+$spectral norm};
\node at (0.03,0.18) [right,align=center,rotate=45] {\small thresholding\\$+$spectrum};
%\node at (0.03,0.35) [right,align=center] {\small $+$spectrum};
\draw (.5,.7) node [below,align=center] {spectrum};
\draw (.55,.47) node [below,align=center] {\scriptsize PC hard};
%\draw (.2,-.1) node [below,align=center] {\small statistically possible\\[0em] \small computationally intractable} edge[out=90,in=270,->] (.5,.15);
\end{tikzpicture}
\caption{Statistical and computational limits in detecting sparse matrices.}%
\label{fig:phase}%
\end{figure}

More precisely, as shown in \prettyref{fig:phase}, the phase diagram of $\alpha$ versus $\beta$ is divided into four regimes:
\begin{enumerate}[(I)]
	\item $\beta > \alpha$: The test based on the largest singular value of the entrywise thresholding estimator succeeds. In particular, we reject if $\Opnorm{X^{\rm Th}} \gtrsim k\sqrt{\log p}$, where $X^{\rm Th}_{ij} = X_{ij} \indc{|X_{ij}| = \Omega(\sqrt{\log p})}$.
	\item $\beta > \frac{1}{2}$: The test based on the large singular value of the direct observation succeeds. In particular, we reject if $\opnorm{X} \gtrsim \sqrt{p}$.
	\item $\frac{1+\alpha}{4} < \beta < \alpha \wedge \frac{1}{2}$: detection is as hard as solving the planted clique problem.
	\item $\beta < \alpha \wedge \frac{1+\alpha}{4}$: detection is information-theoretically impossible.
\end{enumerate}
As mentioned earlier, the computational intractability in detecting sparse matrices is perhaps surprising because 
%(a) the optimal estimation rate can be easily achieved via computationally efficient procedure;
\begin{itemize}
\item[(a)] achieving the optimal estimation rate does not present any computational difficulty;
\item[(b)] unlike problems such as sparse PCA, the sparse matrix model in \prettyref{def:sparsemat} does not explicitly impose any submatrix sparsity pattern as the rows are individually sparse and need not share a common support.
\end{itemize}
%Indeed, the result in \prettyref{fig:phase} suggests that there is no computationally efficient methodology to simultaneously leverage the sparsity structure and the matrix structure, in that either we discard the matrix structure and apply entrywise thresholding by vectorizing the observation in regime (I), or we discard the sparsity structure and treat the observation as a non-sparse matrix in regime (II).

The result in \prettyref{fig:phase} shows that in the moderately sparse regime of $p^{1/3} \ll k \ll p$, outperforming entrywise thresholding is at least as hard as solving planted clique. However, it is possible to improve over entrywise thresholding using computationally inefficient tests. We briefly describe the construction of the optimal test: The first stage is a standard $\chi^2$-test, which rejects the null hypothesis if the mean matrix $M$ has a large Frobenius norm. Under the alternative, if the data can survive this test, meaning $\fnorm{M}$ is small, then $M$ has small stable rank (i.e. $\fnorm{M}^2/\|M\|_2^2$) thanks to the assumption that $\|M\|_2$ is large. The key observation is that for sparse matrices with small stable rank there exists a sparse \emph{approximate singular vector} $v$, in the sense that $\|Mv\|\gtrsim \|M\|_2\|v\|$. Then in the second stage we perform a scan test designed in the similar spirit as in detecting submatrices or sparse principle components. 
The key structural property of sparse matrices is established using a celebrated result of Rudelson and Vershynin \cite{RV07} in randomized numerical linear algebra which shows that the Gram matrix of any matrix $M$ of low stable rank can be approximated by that of a small submatrix of $M$. This shows the existence of sparse approximate singular vector by means of probabilistic method but does not provide a constructive method to find it, which, as  \prettyref{fig:phase} suggests, is likely to be computationally intractable.

To conclude this part, we mention that, the same statistical and computational limits in \prettyref{fig:phase} also apply to detecting sparse covariance matrices when $\lambda$ is  replaced by $\lambda \sqrt{n}$, under appropriate assumptions on the sample size; see \prettyref{sec:computational} for details. 

\subsection{Related work}
	\label{sec:related}

	As opposed to the vector case, there exist various notions of sparsity for matrices as motivated by specific applications, such as:	
	%motivated by specific applications various sparsity models have been proposed for matrices, such as
	%Some of the most well-studied sparsity models for matrices include the following:
	\begin{itemize}
		\item Vector sparsity:
		the total number of nonzeros in the the matrix is constrained \cite{candes2011robust}, e.g., in robust PCA.
		%and more general low-rank+sparse decomposition \cite{candes2011robust}. 
		%This is equivalent to treating the matrix as a sparse vector;
		\item Row sparsity: each row of the matrix is sparse, e.g. matrix denoising \cite{klopp2015estimation}.
		%\item Row-column sparsity: each row and column of the matrix is sparse, e.g. sparse covariance matrix estimation \cite{BJ08b,CZ12,fan2015estimation}.
		\item Group sparsity: each row of the matrix is sparse and shares a common support, e.g., group-sparse regression \cite{lounici2011oracle}.
		\item Submatrix sparsity: the matrix is zero except for a small submatrix, e.g., sparse PCA \cite{BR12,CMW12}, biclustering \cite{BI12,balakrishnan2011tradeoff,MW13b,wang2016statistical}, sparse SVD \cite{yang2016rate}, sparse CCA \cite{gao2017sparse}, and community detection \cite{HajekWuXu14}.
	\end{itemize}
	The sparse matrix model (\prettyref{def:sparsemat}) studied in this paper is stronger than the vector or row sparsity and weaker than submatrix sparsity.
	
	The statistical and computational aspects of detecting matrices with submatrix sparsity has been investigated in the literature for the Gaussian mean, covariance and the Bernoulli models. 
	In particular, for the spiked covariance model where the leading singular vector is assumed to be sparse, 
the optimal detection rate has been obtained in \cite{BR12,CMW13}. 	Detecting submatrices in additive Gaussian noise was studied by Butucea and Ingster \cite{BI12} who not only found the optimal rate but also determined the sharp constants. 
	In the random graph (Bernoulli) setting, the problem of detecting the presence of a small denser community planted in an Erd\"os-R\'enyi graph was studied in \cite{arias2013community}; here the entry of the mean adjacency matrix is $p$ on a small submatrix and $q<p$ everywhere else.
	The computational lower bounds in all three models were established in \cite{berthet2013lowerSparsePCA,MW13b,HajekWuXu14} by means of reduction to the planted clique problem.

	%Estimation of covariance matrix and principle subspaces
		%Estimating eigenvalues: spectrum norm and spectral radius

 Another work that is closely related to the present paper is  \cite{arias2012detection,ABL12}, where the goal is to detect covariance matrices with sparse correlation. Specifically, in the $n$-sample Gaussian covariance model, the null hypothesis is the identity covariance matrix and the alternative hypothesis consists of covariances matrices whose off-diagonals are equal to a positive constant on a submatrix and zero otherwise. 
Assuming various combinatorial structure of the support set, the optimal tradeoff between the sample size, dimension, sparsity and the correlation level has been studied.
	Other work on testing high-dimensional covariance matrices that do not assume sparse alternatives
include testing independence and sphericity, with specific focus on asymptotic power analysis and the limiting distribution of 
test statistics \cite{chen2010tests,CM13, onatski2013asymptotic,onatski2014signal}.
Finally, we mention that yet another two-dimensional detection problem in Gaussian noise \cite{CCHZ08}, where the sparse alternative corresponds to paths in a large graph.

\subsection{Organization and notations}
%\subsection{Notations}
	\label{sec:notation}
		
We introduce the main notation used in this paper:
For any sequences $\{a_n\}$ and $\{b_n\}$ of positive numbers, we write $a_n \gtrsim b_n$ if $a_n\geq cb_n$ holds for all $n$ and some absolute constant $c > 0$, $a_n\lesssim b_n$ if $a_n \gtrsim b_n$, and $a_n \asymp b_n$ if both $a_n\gtrsim b_n$ and $a_n\lesssim b_n$ hold.
In addition, we use $\asymp_k$ to indicate that the constant depends only on $k$.

For any $q\in [1,\infty]$, the $\ell_q\to\ell_q$ induced operator norm of an matrix $M$ is defined as $\|M\|_q \triangleq \max_{\|x\|_{\ell_q} \leq 1} \|Mx\|_{\ell_q} $. In particular, $\|M\|_2$ is the spectral norm, i.e., the largest singular value of $M$, and  $\|M\|_1$ (resp.~$\|M\|_\infty$) is the largest $\ell_1$-norm of the columns (resp.~rows) of $M$. For any $p\times p$ matrix $M$ and $I,J \subset [p]$, let $M_{IJ}$ denote the submatrix $(M_{ij})_{i\in I,j\in J}$. Let $\identity$ and $\allones$ denote the identity and the all-one matrix. Let $\ones$ denote the all-one vector. Let $\PSD_p$ denotes the set of $p\times p$ positive-semidefinite matrices.

\medskip
The rest of the paper is organized as follows: \prettyref{sec:main} presents the main results of the paper in terms of the minimax detection rates for both the Gaussian noise model and the covariance matrix model. Minimax upper bounds together with the testing procedures for the mean model are presented in \prettyref{sec:sparse}, shown optimal by the lower bounds in \prettyref{sec:lb}; in particular, \prettyref{sec:approxvec} introduces a structural property of sparse matrices which underpins the optimal tests in the moderately sparse regime. 
 Results for the covariance model are given in \prettyref{sec:cov} together with additional proofs. \prettyref{sec:computational} discusses the computational aspects and explains how to deduce the computational limit in \prettyref{fig:phase} from that of submatrix detection and sparse PCA. \prettyref{sec:discuss} concludes the paper with a discussion on related problems.

%\section{Simple observations}
%%	\label{sec:}
%In this section we show that the minimax risk of estimating spectral radius (the largest eigenvalue) and spectral norm (the largest singular) are of the same order. Moreover, imposing symmetric or positive definite constraint on matrices does not change the order the minimax risk. Therefore, in the remaining sections, we focus on estimating the spectral norm without assuming any symmetry or positive definiteness of the unknown matrix. 
%
%In the additive noise model, by scale invariance, it is easy to see that $\lambdastar$ is proportional to $\frac{1}{\sqrt{n}}$, the noise standard deviation.
%
%\subsection{Estimating $\lmax$ versus $\|\cdot\|_{\rm op}$}
%differ only by a constant
%
%\subsection{Symmetric versus Asymmetric matrices}
%Imposing a symmetry constraint can only change the minimax risk by a constant factor.
%
%
%\subsection{Estimating $\lmax$ versus $\lmin$}
%	Far less obvious is the relationship between testing or estimating $\lmax$ versus $\lmin$. From the point of view of practical applications, estimating $\lmax$ might be more important, although both are interesting from a theoretical perspective. It turns out that

\section{Main results}
	\label{sec:main}
	
	%Since the set of alternative hypotheses shrinks as the radius $\lambda$ increases, the mapping $\lambda \mapsto \epsilon_n^*(p,k,\lambda)$ is decreasing. We are interested in the minimal separation in the spectral norm to achieve a prescribed probability of error, formally defined as
%\begin{equation}
	%\lambdastar = \inf\{\lambda > 0: \epsilon^*(p,k,\lambda) \leq \epsilon\}.
	%\label{eq:rhostar}
%\end{equation}
%. To this end, let us define $\lambdastar$ as the infimum of $\lambda$ such that the hypotheses in \prettyref{eq:htsparse} can be tested with vanishing probability of error.
%The purpose of this paper is to give non-asymptotic lower and upper bounds for the fundamental limit $\lambdastar$.
	
	We begin with the Gaussian noise model. 
		To quantify the fundamental limit of the hypothesis testing problem \prettyref{eq:htsparse}, we define $\epsilon^*(p,k,\lambda)$ as the optimal sum of Type-I and Type-II probability of error:
		%for testing the hypotheses \prettyref{eq:HTc}, \ie,
\begin{equation}
	\epsilon^*(p,k,\lambda) = 
	\inf_{\phi}  	\sth{\Prob_0(\phi=1) + \sup_{M \in \Theta(p,k,\lambda)}  \Prob_{M}(\phi=0)  }
	%\inf_{A}  	\sth{G_{\identity_p}(A) + \sup_{M \in \Theta(p,k)}   G_{M}(\comp{A})}
	\label{eq:epstar}
\end{equation}
where $\Prob_M$ denotes the distribution of the observation $X=M+Z$ conditioned on the mean matrix $M$, and the infimum is taken over all decision rules $\phi: \reals^{p\times p} \to \{0,1\}$.
%the rejection region $A$ ranging over all Borel sets of $\reals^{p\times p}$.
% and our test is to reject if and only if $X \in A$. 
%\nb{Note that in the additive-noise model, by scaling we have the obvious relationship that $\epsilon_n^*(p,k,\lambda) = \epsilon_1^*(p,k,\lambda \sqrt{n})$. Therefore we can assume $n=1$.}

Our main result is a tight characterization of the optimal detection threshold for $\lambda$. We begin with the Gaussian noise model.
Define the following upper and lower bounds, which differ by at most a factor of $O\Big(\sqrt{\frac{\log p}{\log\log p}}\Big)$:
\begin{equation}
\lambda_1(k,p)\triangleq
\begin{cases}
k \sqrt{\log p}  & k \leq (\frac{p}{\log p})^{\frac{1}{3}} \\
\pth{kp \log \frac{e p}{k}}^{\frac{1}{4}} & k \geq (\frac{p}{\log p})^{\frac{1}{3}} \\
\end{cases}
\label{eq:ub}
\end{equation}	
and
\begin{equation}
\lambda_0(k,p) \triangleq
%\min\sth{k \sqrt{\log \pth{\frac{p \log p}{k^3}}}, \pth{kp \log \frac{p}{k}}^{\frac{1}{4}}},
\begin{cases}
 k \sqrt{ \log \pth{\frac{p \log p}{k^3}}}  & 	k \leq (p \log p)^{\frac{1}{3}} \\
\pth{kp \log \frac{\eexp p}{k}}^{\frac{1}{4}} & k \geq (p \log p)^{\frac{1}{3}} 
	\end{cases}.
\label{eq:lb}
\end{equation}
\begin{theorem}[Gaussian noise model]
\label{thm:main}
%There exists an absolute constant $k_0$ and functions $\epsilon_0,\epsilon_1: \reals_+ \to (0,1)$ satisfying $\lim_{c\to0}\epsilon_0(c)=1$ and $\lim_{c\diverge}\epsilon_1(c)=0$, 
%%and a sequence $\{\delta_p\}$ with $\delta_p\to 0$ as $p\diverge$, 
There exists absolute constant $k_0,c_0,c_1$, such that the following holds for all $k_0\leq k \leq p$:
\begin{enumerate}
	\item For any $c>c_1$, if
\begin{equation}
\lambda \geq c \lambda_1(k,p),
%\min\sth{k \sqrt{\log p}, \pth{kp \log \frac{p}{k}}^{\frac{1}{4}}},
\label{eq:main-ub}
\end{equation}
then $\epsilon^*(k,p,\lambda) \leq \epsilon_1(c)$, where $\epsilon_1(c)\to 0$ as $c\diverge$.

\item Conversely, for any $c>c_0$, if 
\begin{equation}
\lambda \leq c \lambda_0(p,k),
%\min\sth{k \sqrt{\log \pth{\frac{p \log p}{k^3}}}, \pth{kp \log \frac{p}{k}}^{\frac{1}{4}}},
\label{eq:main-lb}
\end{equation}
then $\epsilon^*(k,p,\lambda) \geq \epsilon_0(c)-o_{p\to\infty}(1)$, where $\epsilon_0(c)\to 1$ as $c\to0$.

\end{enumerate}

%For any fixed $\epsilon \in (0,1)$,
	%\begin{equation}
%k \sqrt{\log \frac{ep\log p}{k^3} } \wedge \pth{kp  \log \frac{ep}{k} }^{\frac{1}{4}} \lesssim	\lambdastar \lesssim k \sqrt{\log \frac{ep}{k} } \wedge \pth{kp  \log \frac{ep}{k} }^{\frac{1}{4}},
	%\label{eq:main}
%\end{equation}
\end{theorem}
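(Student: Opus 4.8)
The plan is to prove the achievability bound \prettyref{eq:main-ub} and the impossibility bound \prettyref{eq:main-lb} separately, in each case splitting into the two regimes of $k$ that appear in $\lambda_1$ and $\lambda_0$. For \prettyref{eq:main-ub} in the sparse regime $k\lesssim(p/\log p)^{1/3}$ (where $\lambda_1=k\sqrt{\log p}$) I would analyze the entrywise hard-thresholding test of Regime~(I): form $X^{\rm Th}$ by zeroing every entry with $|X_{ij}|\le\tau$, $\tau=C\sqrt{\log p}$, and reject when $\Opnorm{X^{\rm Th}}\gtrsim k\sqrt{\log p}$. Under $H_0$, taking $C$ large and a union bound over the $p^2$ entries forces $X^{\rm Th}=0$, so the Type-I error vanishes. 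Under $H_1$, split $M=M^{\rm large}+M^{\rm small}$ at level $2\tau$; since $M^{\rm small}$ is $k$-sparse with entries $\le 2\tau$, $\Opnorm{M^{\rm small}}\le\sqrt{\|M^{\rm small}\|_1\|M^{\rm small}\|_\infty}\lesssim k\sqrt{\log p}$, so $\lambda\ge ck\sqrt{\log p}$ with $c$ large gives $\Opnorm{M^{\rm large}}\ge\lambda/2$; on the deterministic $k$-sparse support of $M^{\rm large}$ the thresholded data equals $M^{\rm large}$ plus a masked Gaussian whose operator norm is $\lesssim\sqrt k\,\polylog(p)\ll\lambda$, while off a $k$-sparse set the thresholded noise vanishes whp, yielding $\Opnorm{X^{\rm Th}}\gtrsim\lambda$.

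\textbf{Achievability, dense regime.} For $k\gtrsim(p/\log p)^{1/3}$ (where $\lambda_1=(kp\log(ep/k))^{1/4}$) I would use the maximum of a $\chi^2$ test and a scan test. The $\chi^2$ test rejects when $\fnorm{X}^2-p^2$ exceeds a large multiple of $p$; since $\fnorm{M}\ge\Opnorm{M}\ge\lambda$, this already disposes of all $M$ with $\fnorm{M}^2\gtrsim p$ (in particular all $\lambda\gtrsim\sqrt p$). If $M$ survives it, then $\fnorm{M}^2\lesssim p$, so the stable rank $r=\fnorm{M}^2/\Opnorm{M}^2\lesssim p/\lambda^2$ is small; here I would invoke the structural property of \prettyref{sec:approxvec} — proved from the Rudelson--Vershynin column-sampling inequality together with the row/column sparsity of $\calM(p,k)$ — to produce, for some $I,J$ with $|I|,|J|\asymp kr$, a submatrix with $\Opnorm{M_{IJ}}\gtrsim\lambda$ (equivalently a $(\lesssim r)$-sparse approximate right singular vector). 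The scan test rejects when $\Opnorm{X_{IJ}}$ exceeds a constant times $\sqrt{|J|\log(ep/|J|)}$ for some $I,J$ of this size: under $H_0$, concentration of $\Opnorm{Z_{IJ}}$ about its mean $\asymp\sqrt{|J|}$ plus a union bound over $\binom{p}{|I|}\binom{p}{|J|}$ pairs keeps the maximum below this level, and a direct computation shows $\sqrt{kr\log(ep/(kr))}\asymp\lambda_1$ when $r\asymp p/\lambda_1^2$; under $H_1$, the certified submatrix gives $\Opnorm{X_{IJ}}\ge u\trans X v/(\|u\|\|v\|)\ge\Opnorm{M_{IJ}}-|u\trans Z v|$ with $u\trans Z v\sim N(0,1)\ll\lambda$, hence $\Opnorm{X_{IJ}}\gtrsim\lambda$. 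The error bound $\epsilon_1(c)\to0$ as $c\to\infty$ follows by tracking the slack in these estimates.

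\textbf{Impossibility.} For \prettyref{eq:main-lb} I would use Le Cam's method: $\epsilon^*(k,p,\lambda)\ge 1-\TV(\Prob_0,\bar\Prob_\pi)$ for any prior $\pi$ on $\Theta(p,k,\lambda)$ with $\bar\Prob_\pi=\int\Prob_M\,\pi(dM)$, then $\TV(\Prob_0,\bar\Prob_\pi)^2\le\frac14\chi^2(\bar\Prob_\pi\,\|\,\Prob_0)$, and in the Gaussian location model $\chi^2(\bar\Prob_\pi\,\|\,\Prob_0)+1=\Expect[\exp\langle M,M'\rangle]$ for $M,M'$ drawn independently from $\pi$, so everything reduces to bounding this overlap moment. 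The prior is a randomly located planted sparse structure whose scale and dimensions are tuned so that $\Opnorm{M}\ge\lambda$ while $M$ is simultaneously invisible to (i) the $\chi^2$ test ($\fnorm{M}\lesssim\sqrt p$), (ii) entrywise thresholding (entries $\lesssim\sqrt{\log p}$), and (iii) every scan test (no submatrix carries a detectable share of $\Opnorm{M}$): for $k\gtrsim(p\log p)^{1/3}$ a dense random $\pm\nu$ block of side at most $k$, and for $k\lesssim(p\log p)^{1/3}$ a roughly $k$-regular structure on a window whose size is chosen against the localized $\chi^2$/scan. The moment is then a second-moment estimate: conditioning on the numbers $(a,b)$ of shared rows and columns of the two planted windows — an event of probability of order $(ek^2/(ap))^a(ek^2/(bp))^b$ up to combinatorial factors — and on the random signs/values, $\Expect[\exp\langle M,M'\rangle]=\sum_{a,b}(\cdots)$, and requiring this to be $1+o(1)$ for all admissible $(a,b)$ yields $\lambda\le c\lambda_0(p,k)$, with $\epsilon_0(c)\to1$ as $c\to0$ by letting the signal vanish.

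\textbf{Main obstacle.} On the achievability side the crux is the structural property — that $k$-sparsity plus small stable rank forces a sparse approximate singular vector — and in particular proving it with support size $O(k\cdot r)$ rather than $O(k\cdot r\cdot\log p)$, since the scan threshold $\sqrt{|J|\log(ep/|J|)}$ must land within a constant factor of $\lambda_1$, not a $\polylog$ factor off; this is exactly where the constant-factor (rather than $(1\pm\epsilon)$) form of the Rudelson--Vershynin bound, applied with merely a non-vanishing success probability, is needed. On the impossibility side the delicate point is that a single naive planted block is detectable by one of the $\chi^2$/spectral/thresholding/scan tests over part of the range, so the prior must evade all of them at once and the overlap computation must be carried out uniformly over the admissible window sizes; I expect the residual $O(\sqrt{\log p/\log\log p})$ gap between $\lambda_0$ and $\lambda_1$ (present only near $k\asymp p^{1/3}$) to come from this construction rather than from the tests.
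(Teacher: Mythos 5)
Your achievability argument follows the paper's route in both regimes: entrywise thresholding with the spectral norm of the thresholded matrix compared against $k\sqrt{\log p}$ in the highly sparse regime, and the $\chi^2$ test combined with a scan over small submatrices certified by the Rudelson--Vershynin structural lemma in the moderately sparse regime. One remark on your stated ``main obstacle'': the paper's structural lemma does carry a logarithmic factor on one of the two index sets ($|I|\leq Ckr$ but $|J|\leq Ckr\log r$), and this is harmless because it is absorbed into the $\log^{1/4}$ factor already present in $\lambda_1=(kp\log\frac{ep}{k})^{1/4}$; so insisting on support size $O(kr)$ is not where the difficulty lies. Likewise your small-$k$ lower-bound prior (a roughly $k$-regular fill on a window of tuned size) is the paper's.

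The genuine gap is the lower-bound prior in the moderately sparse regime $k\gtrsim(p\log p)^{1/3}$. A \emph{dense} random $\pm\nu$ block must have side at most $k$ to respect row and column $k$-sparsity, and for such a block (with rank-one sign pattern, which is the least detectable choice) the Ingster--Suslina overlap reduces to $\Expect[\exp(\nu^2 G_H^2)]$ with $H\sim\Hyper(p,k,k)$; this stays bounded only when $\nu^2\lesssim\frac{1}{k}\log\frac{ep}{k}$, i.e.\ $\lambda=\nu k\lesssim\sqrt{k\log\frac{ep}{k}}$. That falls strictly short of the target $\lambda_0\asymp(kp\log\frac{ep}{k})^{1/4}$ throughout $p^{1/3}\lesssim k\ll p/\log p$ (compare fourth powers: $k^2\log^2$ versus $kp\log$). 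The construction that actually attains $\lambda_0$ plants a window $I$ of side $m\asymp\sqrt{kp/\log\frac{ep}{k}}\gg k$, fills $M_{II}$ with \iid\ $\Bern(k/m)$ entries of magnitude $t$, and randomizes signs by an independent rank-one pattern $u_iu_j$, so each row still has only about $k$ nonzeros even though the window is much larger than $k$. The overlap exponent then splits into a $\sinh(t^2)$ term in $G_H^2$ and a $\cosh(t^2)-1$ term in $H^2$ with $H\sim\Hyper(p,m,m)$, and it is the $\frac{p^2}{m^4}$ branch of the bound on $\Expect[\exp(\lambda H^2)]$, balanced against the $\frac{1}{m}\log\frac{ep}{m}$ branch by the choice $m\asymp\sqrt{kp/\log\frac{ep}{k}}$, that yields $t^2\asymp\sqrt{\frac{p}{k^3}\log\frac{ep}{k}}$ and hence $\lambda\asymp kt\asymp(kp\log\frac{ep}{k})^{1/4}$. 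Your ansatz has no analogue of this window-much-larger-than-$k$ mechanism, so as written the impossibility half would only establish the weaker, sparse-PCA-type threshold in that regime.
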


To parse the result of \prettyref{thm:main}, let us denote by $\lambda^*(p,k)$ the optimal detection threshold, i.e., the minimal value of $\lambda$ so that the optimal probability of error $\epsilon^*(p,k,\lambda)$ is at most a constant, say, $0.1$. Then we have the following characterization:
\begin{itemize}
%\item High sparsity: $k \lesssim (\frac{p}{\log p})^{1/3}$: 
\item High sparsity: $k \leq p^{1/3-\delta}$: 
	\[
	\lambda^* \asymp_{\delta}  k \sqrt{\log p}
	\]
\item Moderate sparsity: $k \gtrsim (p \log p)^{1/3}$: 
	\[
	\lambda^* \asymp  \pth{kp  \log \frac{ep}{k} }^{\frac{1}{4}}
	\]
	\item Boundary case: $(\frac{p}{\log p})^{1/3} \lesssim k \lesssim (p \log p)^{1/3}$: 
		\[
k \sqrt{\log \frac{ep\log p}{k^3} } 	\lesssim \lambda^* \lesssim  \pth{kp  \log \frac{ep}{k} }^{\frac{1}{4}},
	\]
	where the upper and lower bounds are within a factor of $O\Big(\sqrt{\frac{\log p}{\log \log p}}\Big)$.
\end{itemize}
Furthermore, two generalizations of \prettyref{thm:main} will be evident from the proof:
(a) the upper bound in \prettyref{thm:main} as well as the corresponding optimal tests apply as long as the noise matrix consists of independent entries with subgaussian distribution with constant proxy variance;
(b) the lower bound in \prettyref{thm:main} continues to hold up even if the mean matrix is constrained to be symmetric. Thus, symmetry does not improve the minimax detection rate.

%\section{Covariance model}
%\label{sec:cov}
Next we turn to the sparse covariance model:
%where we observe $n$ independent samples drawn from the $p$-variate normal distribution $N(0, \Sigma)$. The parameter space we consider are those covariance matrices which are equal to identity plus a sparse perturbation. More precisely,
Given $n$ independent samples drawn from $N(0,\Sigma)$,  the goal is to test the following hypothesis 
	%\begin{cases}
%H_0: & ~ X_1,\cdots,X_n \iiddistr N(0,\identity_p)	\nonumber \\
%H_1: & ~ X_1,\cdots,X_n \iiddistr N(0,\Sigma), \quad \Sigma \in \Xi(p,k,\lambda)
%\end{cases}
	\begin{equation}	
\begin{cases}
H_0: & ~ \Sigma=\identity	\nonumber \\
H_1: & ~ \Sigma \in \Xi(p,k,\lambda,\tau),
\end{cases}
	\label{eq:htsparse-cov}
	\end{equation}
	where the parameter space for sparse covariances matrices is
	\begin{equation}
	\Xi(p,k,\lambda,\tau) = 
	\{\Sigma \in \PSD_p: \Sigma \in \calM(p,k), \norm{\Sigma-\identity}_2 \geq \lambda, \norm{\Sigma} \leq \tau \}.
	\label{eq:sparsecov}
	\end{equation}	
In other words, under  the alternative, the covariance is equal to identity plus a sparse perturbation. Throughout the paper, the parameter $\tau$ is assumed to be a constant.

Define the minimax probability of error as:
\begin{equation}
	\epsilon_n^*(p,k,\lambda) = 
	\inf_{\phi}  	\sth{\Prob_{\identity}(\phi=1) + \sup_{
	\Sigma \in \Xi(p,k,\lambda,\tau)}  \Prob_{\Sigma}(\phi=0)  }
	%\inf_{A}  	\sth{G_{\identity_p}(A) + \sup_{M \in \Theta(p,k)}   G_{M}(\comp{A})}
	%\label{eq:epstar}
\end{equation}
where $\phi \in \{0,1\}$ is a function of the samples $(X_1,\ldots,X_n) \iiddistr N(0,\Sigma)$.
%\iiddistr N(0,\Sigma)$ 

Analogous to \prettyref{thm:main}, the next result characterizes the optimal detection threshold for sparse covariance matrices.
\begin{theorem}[Covariance model]
	\label{thm:main-cov}
There exists absolute constants $k_0,C,c_0,c_1$, 
%and functions $\epsilon_0,\epsilon_1: \reals_+ \to (0,1)$ satisfying $\lim_{c\to0}\epsilon_0(c)=1$ and $\lim_{c\diverge}\epsilon_1(c)=0$, 
%and a sequence $\{\delta_p\}$ with $\delta_p\to 0$ as $p\diverge$, 
such that the following holds for all $k_0\leq k \leq p$.
\begin{enumerate}
	\item 
%Assume that
%\begin{equation}
%n \gtrsim  
%\begin{cases}
%k^2 \log p  & k \leq (\frac{p}{\log p})^{\frac{1}{3}} \\
%\sqrt{kp \log \frac{p}{k}}  & k \geq (\frac{p}{\log p})^{\frac{1}{3}} \\
%\end{cases}.
%\label{eq:nassumption-ub}
%\end{equation}
Assume that $n \geq C \log p$. For any $c>c_1$, if
\begin{equation}
\lambda  \geq \frac{c}{\sqrt{n}} \lambda_1(k,p),
%\min\sth{k \sqrt{\log p}, \pth{kp \log \frac{p}{k}}^{\frac{1}{4}}},
\label{eq:main-ub-cov}
\end{equation}
then $\epsilon_n^*(k,p,\lambda) \leq \epsilon_1(c)$, where $\epsilon_1(c)\to 0$ as $c\diverge$.
 
\item Assume that
\begin{equation}
n \geq C\lambda_0(p,k)^2 \log p
%\begin{cases}
 %k^2 \log \frac{p}{c k^3} \log^2 p  &  k \leq p^{1/3}\\
 %\sqrt{kp \log \frac{ep}{k}} \log^2 p, & k \geq p^{1/3}\\
%\end{cases}
\label{eq:nassumption-basic}
\end{equation}
%in addition to \prettyref{eq:nassumption-basic} that
and
\begin{equation}
n \geq C\cdot
\begin{cases}
\frac{k^6}{p}  \pth{\frac{p}{k^3}}^{2 \delta} \log^2 p &  k\leq p^{1/3}\\
p & k\geq p^{1/3}
\end{cases},
\label{eq:nassumption-extra}
\end{equation}
where $\delta$ is any constant in $(0, \frac{2}{3}]$.
If
\begin{equation}
\lambda \leq \frac{c}{\sqrt{n}} \lambda_0(k,p),
%\min\sth{k \sqrt{\log \pth{\frac{p \log p}{k^3}}}, \pth{kp \log \frac{p}{k}}^{\frac{1}{4}}},
\label{eq:main-lb-cov}
\end{equation}
then $\epsilon^*(k,p,\lambda) \geq \epsilon_0(c)-o_{p\to\infty}(1)$, where $\epsilon_0(c)\to 1$ as $c\to0$

\end{enumerate}

%For any fixed $\epsilon \in (0,1)$,
	%\begin{equation}
%k \sqrt{\log \frac{ep\log p}{k^3} } \wedge \pth{kp  \log \frac{ep}{k} }^{\frac{1}{4}} \lesssim	\lambdastar \lesssim k \sqrt{\log \frac{ep}{k} } \wedge \pth{kp  \log \frac{ep}{k} }^{\frac{1}{4}},
	%\label{eq:main}
%\end{equation}
\end{theorem}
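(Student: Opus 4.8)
The plan is to transfer both directions of \prettyref{thm:main} to the covariance model through the sample covariance matrix $S=\frac1n\sum_{i=1}^n X_iX_i^\trans$, whose mean is $\Sigma$, so that $\sqrt n\,(S-\identity)$ plays the role of the observation $X=M+Z$ in \prettyref{eq:htsparse} with unit-variance noise and mean matrix $\sqrt n\,M$, where $M\triangleq\Sigma-\identity$. For the upper bound I would start from the exact identity $S-\identity=M+\Sigma^{1/2}(G-\identity)\Sigma^{1/2}$, obtained by writing $X_i=\Sigma^{1/2}g_i$ with $g_i\iiddistr N(0,\identity)$ and $G=\frac1n\sum_i g_ig_i^\trans$. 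Since $\norm{\Sigma}\le\tau=O(1)$, the noise term $E\triangleq\Sigma^{1/2}(G-\identity)\Sigma^{1/2}$ has $\Opnorm E\lesssim\sqrt{p/n}\vee(p/n)$ with high probability by the usual Wishart deviation bounds, and each entry is a Gaussian quadratic form obeying $\prob{|E_{ij}|>t}\le 2e^{-cnt^2}$ for $|t|\lesssim 1$ by Hanson--Wright; hence, restricted to any submatrix of size at most $p$, $\sqrt n\,E$ behaves like a matrix of independent subgaussian entries of constant proxy variance. One then runs the three tests of \prettyref{sec:sparse} on $S$ in place of $X$, with every threshold divided by $\sqrt n$: entrywise thresholding at level $\asymp\sqrt{(\log p)/n}$ followed by the spectral test (the hypothesis $n\ge C\log p$ keeps this level $O(1)$, so the subexponential entry tails are effectively subgaussian and a union bound over the $p^2$ entries is affordable), the direct test rejecting when $\Opnorm{S-\identity}\gtrsim\sqrt{p/n}$, and the $\chi^2$-plus-scan test of \prettyref{sec:approxvec} (for which the null law of $\sum_{i\ne j}S_{ij}^2$ is no longer exactly chi-squared, but its mean and fluctuations are controlled again via Hanson--Wright and $\norm{\Sigma}\le\tau$). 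Generalization (a) of \prettyref{thm:main} (validity of the upper bound under subgaussian noise with constant proxy variance) then yields $\epsilon_n^*\le\epsilon_1(c)$ whenever $\lambda\ge\frac c{\sqrt n}\lambda_1(k,p)$.

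For the lower bound I would recycle the prior $\pi$ on $k$-sparse mean matrices used to prove \prettyref{eq:main-lb}; by generalization (b) of \prettyref{thm:main} it may be taken supported on \emph{symmetric} matrices with $\Opnorm M\gtrsim\lambda_0$ almost surely and $\Expect_{M,M'}[\exp(\Iprod{M}{M'})]\le 1+o(1)$, which is exactly the shape a covariance perturbation must have. Let $\pi_n$ be the pushforward of $\pi$ under $M\mapsto\gamma M$ with $\gamma\asymp 1/\sqrt n$ chosen so that $\gamma\lambda_0\asymp\lambda$, put $Q_n=\Expect_{M\sim\pi_n}[N(0,\identity+M)^{\otimes n}]$ and $P_n=N(0,\identity)^{\otimes n}$, and observe that \prettyref{eq:nassumption-basic} forces $\lambda<1$, hence $\Opnorm{MM'}<1$ for independent $M,M'\sim\pi_n$. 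The exact likelihood-ratio identity for centered Gaussians gives
\begin{equation}
\chi^2(Q_n\,\|\,P_n)+1=\Expect_{M,M'}\big[\det(\identity-MM')^{-n/2}\big],
\label{eq:covchi2}
\end{equation}
and expanding the exponent,
\[
-\frac n2\log\det(\identity-MM')=\frac n2\Tr(MM')+\frac n2\sum_{\ell\ge 2}\frac1\ell\,\Tr\!\big((MM')^\ell\big).
\]
With $M=\gamma\tilde M$ and $M'=\gamma\tilde M'$, the leading term equals $\frac n2\gamma^2\Iprod{\tilde M}{\tilde M'}\asymp\Iprod{\tilde M}{\tilde M'}$, so the $\ell=1$ contribution to \prettyref{eq:covchi2} reproduces the mean-model second moment and is $1+o(1)$. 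What remains is to show the $\ell\ge2$ terms --- of order $n\gamma^4\lesssim 1/n$ times traces of products of the prior matrices, hence formally vanishing for large $n$ --- stay under control when exponentiated and averaged against $\pi_n$, which can put mass on matrices with atypically large rows. Handling this by truncating $\pi_n$ to a favorable event and carrying out a second-moment analysis of the resulting determinantal average is exactly what produces the sample-size requirements \prettyref{eq:nassumption-basic}--\prettyref{eq:nassumption-extra}; converting the resulting $\chi^2$ (equivalently, total-variation) bound then gives $\epsilon_n^*\ge\epsilon_0(c)-o(1)$.

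The hard part will be the lower bound: one must ensure that the \emph{nonlinear}, determinantal dependence of the Gaussian likelihood on the covariance perturbation --- the $\ell\ge2$ terms above --- does not destroy the fragile second-moment cancellation behind the mean-model lower bound. Pinning down exactly how large $n$ must be for this, uniformly over the sparsity regimes (which is what \prettyref{eq:nassumption-basic}--\prettyref{eq:nassumption-extra} encode, and which in the high-sparsity regime $k\le p^{1/3}$ demands a particularly delicate treatment since the worst-case priors there are supported on matrices with comparatively large individual entries), is the crux of the argument; the upper bound, by contrast, is essentially bookkeeping once the subgaussian-noise version of \prettyref{thm:main} is in hand.
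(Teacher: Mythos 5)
Your overall architecture matches the paper's: the upper bound is obtained by rerunning the mean-model tests on the sample covariance (the paper uses the entrywise test with the subexponential tail bound for $S_{ij}$ and, in the moderately sparse regime, replaces $\fnorm{X}^2$ by the unbiased U-statistic $Q(S)$ of \prettyref{eq:QS} plus the same scan statistic, essentially your plan), and the lower bound reuses the symmetrized mean-model prior inside the exact Gaussian $\chi^2$ identity $\chi^2+1=\Expect[\det(\identity-T\tilde T)^{-n/2}]$, isolating the linear term $\frac n2\Tr(T\tilde T)$ that reproduces the mean-model calculation. However, you have left unexecuted precisely the step that you yourself identify as the crux, and that step is where all of \prettyref{eq:nassumption-basic}--\prettyref{eq:nassumption-extra} actually originates; as written the proposal contains no argument that the $\ell\geq 2$ terms are controllable, only the assertion that a truncation-plus-second-moment analysis should do it. This is a genuine gap, not bookkeeping.

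The paper's treatment of that step has three ingredients you would need to supply. First, rather than handling the infinite Taylor series term by term, it uses $\log(1-x)\geq -x-\frac{x^2}{1-\delta^2}$ for $x\leq\delta^2<1$ applied to the eigenvalues of $T\tilde T$, which collapses the entire $\ell\geq2$ tail into the single quantity $\frac{n}{1-\delta^2}\fnorm{T\tilde T}^2$ (\prettyref{lmm:chi2.cov}); Cauchy--Schwarz then cleanly separates $\Expect[\exp(n\Iprod{T}{\tilde T})]$ (the mean-model term) from $\Expect[\exp(cn\fnorm{T\tilde T}^2)]$. Second, to bound the latter the paper centers the Bernoulli matrix, writing $U_IBU_I=U_I(B-\epsilon\allones)U_I+\epsilon vv^\top$ with $\epsilon=k/m$, so that $M\tilde M$ splits into four pieces $h_1,\dots,h_4$: three are rank one and are handled by \prettyref{lmm:rm} and subgaussian MGF bounds, while $h_4$ has rank at most $|I\cap\tilde I|$ and Frobenius norm controlled by $\opnorm{B_{II}-\epsilon\allones_{II}}^2\,\|\tilde B_{\tilde I\tilde I}-\epsilon\allones_{\tilde I\tilde I}\|_2^2\,|I\cap\tilde I|$. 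Third, controlling $h_4$ requires conditioning on a high-probability event bounding $\opnorm{B_{II}-\epsilon\allones_{II}}$ via sharp spectral concentration for i.i.d.\ Bernoulli matrices, and it is exactly the resulting MGF bound in $|I\cap\tilde I|$ that forces the extra sample-size condition \prettyref{eq:nassumption-extra} (e.g.\ $n\gtrsim p$ for $k\geq p^{1/3}$). None of this is forced by your expansion: a naive bound on $\Tr((MM')^\ell)$ by $\opnorm{M}$ alone, or a term-by-term treatment of the series, does not yield the stated conditions. Your aside that the regime $k\leq p^{1/3}$ needs a "particularly delicate" separate treatment is also not how the argument goes; both regimes use the identical decomposition, differing only in the choice of $m$ and $s$.
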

In comparison with \prettyref{thm:main}, we note that the rate-optimal lower bound in \prettyref{thm:main-cov} holds under the assumption that the sample size is sufficiently large. In particular, the condition \prettyref{eq:nassumption-basic} is very mild because, by the assumption that $\|\Sigma\|_2$ is at most a constant, in order for the right hand side of \prettyref{eq:main-lb-cov} to be bounded, it is necessary to have $n \geq \lambda_0(p,k)^2$. 
The extra assumption 
\prettyref{eq:nassumption-extra}, when $k\geq  p^{1/4}$, does impose a non-trivial constraint on the sample size. This assumption is due to the current lower bound technique based on the $\chi^2$-divergence. In fact, the lower bound in \cite{CM13} for testing covariance matrix without sparsity uses the same method and also requires $n \gtrsim p$.

The results of Theorems \ref{thm:main} and \ref{thm:main-cov} also demonstrate the phenomenon of the \emph{separation of detection and estimation}, which is well-known in the Gaussian sequence model.
	The minimax estimation of sparse matrices has been systematically studied by Cai and Zhou \cite{CZ12} in the covariance model, where it is shown that entrywise thresholding  achieves the minimax rate in the spectral norm loss of $k \sqrt{\frac{\log p}{n}}$ provided that $n \gtrsim k^2 \log^3 p$ and $\log n \lesssim \log p$; similar rate of $k \sqrt{\log p}$ also holds for the Gaussian noise model. In view of this result, an interesting question is whether a ``plug-in'' approach for testing, namely, using the spectral norm of the minimax estimator as the test statistic, achieves the optimal detection rate.
This method is indeed optimal in the very sparse regime of $k \ll p^{1/3}$, but fails to achieve the optimal detection rate in the moderately sparse regime of $k\gg p^{1/3}$, which, in turn, can be attained by a computationally intensive test procedure.
%that relies on a structral property of sparse covariance 
This observation should be also contrasted with the behavior in the vector case. To detect the presence of a $k$-sparse $p$-dimensional vector in Gaussian noise, entrywise thresholding, which is the optimal estimator for all sparsity levels, achieves the minimax detection rate in $\ell_2$-norm when $k \ll \sqrt{p}$, while the $\chi^2$-test, which disregards sparsity, is optimal when $k\gg \sqrt{p}$.

\section{Test procedures and upper bounds}
	\label{sec:sparse}
	
	In this section we consider the two sparsity regimes separately and design the corresponding rate-optimal testing procedures.
	In the highly sparse regime of $k \lesssim (\frac{p}{\log p})^{\frac{1}{3}}$, 
tests based on	componentwise thresholding turns out to achieve the optimal rate of detection. 
In the moderately sparse regime of $k \gtrsim (\frac{p}{\log p})^{\frac{1}{3}}$, chi-squared test combined with the approximate singular vector property in \prettyref{sec:approxvec} is optimal.	
	%The most interesting case is the moderately sparse regime, where the test is based on the structural property described in \prettyref{sec:approxvec}.

	\subsection{A structural property of sparse matrices}
	\label{sec:approxvec}

	Before we proceed to the construction of the rate-optimal tests, we first present a structural property of sparse matrices, which may be of independent interest.
	Recall that a matrix $M$ is $k$-sparse in the sense of \prettyref{def:sparsemat} if its rows and columns are sparse but need not to have a common support. 
	If we further know that $M$ has low rank, then the row support sets must be highly aligned, and therefore $M$ has a sparse eigenvector.
	The main result of this section is an extension of this result to approximately low-rank matrices and their approximate eigenvectors.
	%The intuition behind this result is the following: If a matrix $M$ has large operator norm, then its rows must be highly correlated. If $M$ is also sparse, then the row support sets must be well aligned, which means that the operator norm is concentrated on small submatrix. First we introduce the definition of \emph{approximate singular vector}.

	%which may be of independent of interest.

 \begin{definition} 
	\label{def:approxvec}
	We say $v \in \reals^p$ is an $\epsilon$-approximate singular vector of $\Sigma$ if $\norm{\Sigma v}_2 \geq (1-\epsilon) \opnorm{\Sigma} \norm{v}$.
\end{definition}

We also need the notion of \emph{stable rank} (also known as numerical rank): 
\begin{equation}
\sr(M) \triangleq \frac{\fnorm{M}^2}{\opnorm{M}^2}\,,
	\label{eq:sr}
\end{equation}
which is always a lower bound of $\rank(M)$.

%shows that sparse matrices have sparse approximate singular vectors:
%\begin{lemma}[Concentration on small submatrix]
%Let $k=\sqrt{p}$.
%Let $A \in \Sigma_{k,p}$ with $\Fnorm{A}^2 \leq p$ and each row and columns having $\ell_2$ norm at most $\sqrt{k}$. 
%%Let $\epsilon > 0$.
%If $\|A\|_2 \geq \epsilon k$, then there exists a submatrix $A_{IJ}$ with $|I| \leq \frac{16}{\epsilon^4} k$ and $|J| \leq k$, such that $\big\|A_{IJ}\big\|_{\rm F}^2 \geq \frac{\epsilon^8 p}{256}$.
%%$\mathring{A}$ of size at most $\times k$ (\ie, $\mathring{A} = (a_{ij})_{i\in I,j\in J}$ for some $I,J \subset [p]$ with $|I| \leq $ and $|J| \leq k$), such that $\big\|\mathring{A}\big\|_{\rm F}^2 \geq $.
%	\label{thm:approxvec}
%\end{lemma}

The following lemma gives sufficient conditions for a sparse matrix to have sparse approximate singular vectors.
The key ingredient of the proof is a celebrated result of Rudelson-Vershynin \cite{RV07} in randomized numerical linear algebra which shows that the Gram matrix of any matrix $M$ of stable rank at most $r$ can be approximated by that of a submatrix of $M$ formed by $O(r \log r)$ rows.
The following is a restatement of \cite[Theorem 3.1]{RV07} without the normalization:
 %(for similar results see ).
\begin{lemma}
\label{lmm:RV07}	
There exists an absolute constant $C_0$ such that the following holds.
	Let $y\in\reals^n$ be a random vector with covariance matrix $K=\Expect[yy^\top]$. Assume that $\|y\|_2 \leq M$ holds almost surely.
%	 $\|K\|_2 \leq \lambda$. 
Let $y_1,\ldots,y_d$ be iid copies of $y$. Then
\[
\Expect\bigg\|\frac{1}{d}\sum_{i=1}^d y_i y_i^\top- K\bigg\|_2 \leq C_0 M \sqrt{ \opnorm{K} \frac{\log d}{d}},
% \sqrt{\frac{\log d}{d}} M\sqrt{\opnorm{K}},
\]
provided that the right-hand side is less than $\opnorm{K}$.
\end{lemma}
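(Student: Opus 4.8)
The statement is \cite[Theorem 3.1]{RV07} modulo a normalization convention: that reference is phrased in normalized form, and the version here follows by applying it to $y/M$ (which has $\|y/M\|_2\le1$ a.s.\ and covariance $K/M^2$) and multiplying both the bound and the proviso through by $M^2$. So the shortest route is to invoke \cite{RV07}. For a self-contained proof the plan is the classical symmetrization $+$ Rudelson $+$ bootstrap argument; write $S_d:=\frac1d\sum_{i=1}^d y_iy_i^\top$, $E:=\Expect\|S_d-K\|_2$, and assume without loss of generality $K\neq0$.

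First I would symmetrize: with an independent copy $(y_i')$ and Rademacher signs $(\varepsilon_i)$, using $K=\Expect[\frac1d\sum_i y_i'(y_i')^\top]$ and Jensen, the standard symmetrization inequality gives
\[
E\le\Expect\Big\|\frac1d\sum_i\varepsilon_i\big(y_iy_i^\top-y_i'(y_i')^\top\big)\Big\|_2\le 2\,\Expect\Big\|\frac1d\sum_i\varepsilon_i y_iy_i^\top\Big\|_2.
\]
Next, conditioning on $(y_i)$, I would apply Rudelson's selection lemma (a consequence of the non-commutative Khintchine inequality): for $d\ge2$,
\[
\Expect_\varepsilon\Big\|\sum_i\varepsilon_i y_iy_i^\top\Big\|_2\lesssim\sqrt{\log d}\,\max_i\|y_i\|_2\,\Big\|\sum_i y_iy_i^\top\Big\|_2^{1/2}\le M\sqrt{\log d}\,\Big\|\sum_i y_iy_i^\top\Big\|_2^{1/2},
\]
where I used $\|y_i\|_2\le M$ a.s. Taking expectations over $(y_i)$, Cauchy--Schwarz, and the crude bound $\|\sum_i y_iy_i^\top\|_2\le d\|S_d-K\|_2+d\opnorm{K}$ then give $E\lesssim M\sqrt{\log d/d}\;\sqrt{E+\opnorm{K}}$.

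The final step is to bootstrap this self-referential bound. Setting $a:=cM\sqrt{\log d/d}$ with $c$ the absolute constant produced above, the inequality reads $E^2\le a^2E+a^2\opnorm{K}$. The hypothesis that the claimed bound is $<\opnorm{K}$, i.e.\ $C_0M\sqrt{\opnorm{K}\log d/d}<\opnorm{K}$, is exactly $C_0^2M^2\log d/d<\opnorm{K}$, which for $C_0\ge2c$ forces $a^2<\opnorm{K}/4$; solving the quadratic then yields $E\le\frac12\big(a^2+\sqrt{a^4+4a^2\opnorm{K}}\big)\le 2a\sqrt{\opnorm{K}}=2cM\sqrt{\opnorm{K}\log d/d}$, which is the claim with $C_0=2c$.

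I expect the only genuinely substantial ingredient to be Rudelson's lemma, which is what produces the $\sqrt{\log d}$ factor; a matrix-Bernstein bound would instead give a $\sqrt{\log n}$, which is useless in the intended application (there $d$ can be far smaller than $n$). Since this lemma is classical I would quote it rather than reprove it. The only other point requiring care is the bootstrap, where one must check that the ``provided that'' clause is precisely what dominates the lower-order $E^2$ term and keeps the constants from symmetrization, Rudelson, and the quadratic mutually consistent; everything else is routine.
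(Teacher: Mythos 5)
Your proposal is correct and matches the paper exactly: the paper offers no proof of this lemma, presenting it explicitly as a restatement of \cite[Theorem 3.1]{RV07} ``without the normalization,'' which is precisely your rescaling $y\mapsto y/M$. Your supplementary self-contained sketch (symmetrization, Rudelson's lemma, and the bootstrap using the proviso to absorb the $E^2$ term) is also a faithful outline of the argument in \cite{RV07} itself, but is not needed here.
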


\begin{theorem}[Concentration of operator norm on small submatrices]
Let $k \in [p]$. Let $M$ be a $p \times p$ $k$-sparse matrix (not necessarily symmetric) in the sense that all rows and columns are $k$-sparse.
Let $r=\sr(M)$. % If $\fnorm{M}^2 \leq A$ and $\norm{}$
%Then $M$ has a $\frac{3}{4}$-approximate singular vector which has no more than $4 k \sr(M)$ non-zeros.
%Let $L=k \sr(M)$.
Then there exists $I,J\subset [p]$, such that
\[
\opnorm{M_{IJ}} \geq \frac{1}{8} \opnorm{M}, \quad |I| \leq C k r, \quad |J| \leq C k r \log r
\]
where $C$ is 
an absolute constant.
	\label{thm:approxvec}
\end{theorem}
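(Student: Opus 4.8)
The plan is to bypass the probabilistic machinery of \prettyref{lmm:RV07} and instead extract the submatrix by two successive deterministic truncations, keeping at each stage only the rows (then the columns) of large $\ell_2$-norm. The underlying point is purely combinatorial: $k$-sparsity forces the spectral norm of $M$ to be carried by a \emph{small} set of heavy rows, and likewise for columns, so discarding the light ones costs only a constant factor in $\opnorm{M}$ while driving both dimensions down to $O(kr)$.

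For the row truncation, write $m_1,\dots,m_p\in\reals^p$ for the rows of $M$, let $v$ be a unit top right singular vector so that $\opnorm{M}^2=\|Mv\|_2^2=\sum_{i=1}^p\langle m_i,v\rangle^2$, and put $I=\{i:\|m_i\|_2^2\ge\opnorm{M}^2/(2k)\}$. The key estimate is that the rows outside $I$ contribute negligibly to this sum: writing $S_i=\supp(m_i)$, so that $|S_i|\le k$ by row-sparsity, Cauchy--Schwarz gives $\langle m_i,v\rangle^2\le\|m_i\|_2^2\|v_{S_i}\|_2^2$, whence $\sum_{i\notin I}\langle m_i,v\rangle^2<\frac{\opnorm{M}^2}{2k}\sum_{i=1}^p\|v_{S_i}\|_2^2\le\frac{\opnorm{M}^2}{2k}\cdot k=\frac{\opnorm{M}^2}{2}$, where the middle inequality reorganizes the double sum as $\sum_i\|v_{S_i}\|_2^2=\sum_j v_j^2\,\|M_{*j}\|_0\le k\|v\|_2^2$ using column-sparsity. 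Therefore $\opnorm{M_{I*}}^2\ge\|M_{I*}v\|_2^2=\sum_{i\in I}\langle m_i,v\rangle^2\ge\tfrac{1}{2}\opnorm{M}^2$, while $|I|\cdot\frac{\opnorm{M}^2}{2k}\le\sum_{i\in I}\|m_i\|_2^2\le\fnorm{M}^2=r\opnorm{M}^2$ forces $|I|\le 2kr$.

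The second stage applies the same argument verbatim to $N:=M_{I*}$, which still has $k$-sparse rows (they are rows of $M$) and $k$-sparse columns (sub-columns of columns of $M$): taking a unit top \emph{left} singular vector $u$ of $N$ and setting $J=\{j:\|N_{*j}\|_2^2\ge\opnorm{N}^2/(2k)\}$, the transpose of the estimate above yields $\opnorm{N_{*J}}^2\ge\tfrac{1}{2}\opnorm{N}^2$ and $|J|\le 2k\,\sr(N)$. Since $\opnorm{N}^2\ge\tfrac{1}{2}\opnorm{M}^2$ we get $\sr(N)=\fnorm{N}^2/\opnorm{N}^2\le 2r$, hence $|J|\le 4kr$; and combining the two stages, $M_{IJ}=N_{*J}$ obeys $\opnorm{M_{IJ}}\ge\tfrac{1}{\sqrt{2}}\opnorm{N}\ge\tfrac{1}{2}\opnorm{M}\ge\tfrac{1}{8}\opnorm{M}$. (This argument in fact produces both dimension bounds with no $\log r$ factor; the route via \prettyref{lmm:RV07}, which would sample $O(r\log r)$ rows according to leverage scores, delivers only the stated weaker bound $|J|=O(kr\log r)$ and additionally must convert the reweighted empirical Gram matrix produced by importance sampling back into an honest, unweighted submatrix.)

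The main work lies in the two ``combinatorial'' identities $\sum_i\|v_{S_i}\|_2^2\le k$ and its transpose $\sum_j\|u_{T_j}\|_2^2\le k$ (with $T_j=\{i\in I:N_{ij}\ne 0\}$), which is exactly where row-sparsity and column-sparsity play complementary roles: one of them bounds the support sizes appearing in the Cauchy--Schwarz step, the other bounds the column (resp.\ row) degrees $\|M_{*j}\|_0$, $\|N_{i*}\|_0$ when the order of summation is exchanged. One should also record the trivial degenerate cases ($M=0$, or $r$ near $1$), though $I$ is automatically nonempty: the very estimate above, applied to all rows at once, shows $\max_i\|m_i\|_2^2\ge\opnorm{M}^2/(2k)$.
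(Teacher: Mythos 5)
Your proof is correct, and it takes a genuinely different---and in fact quantitatively stronger---route than the paper's. The paper thresholds rows and columns by $\ell_2$-norm at level $\opnorm{M}/(2\sqrt{k})$, kills the light-light block $B$ via the duality bound $\opnorm{B}\le\sqrt{\|B\|_1\|B\|_\infty}$ (converting small $\ell_2$-norm of $k$-sparse rows into small $\ell_1$-norm by Cauchy--Schwarz), which yields one index set of size $O(kr)$ essentially for free, and then extracts the second index set from the randomized column-sampling lemma of Rudelson--Vershynin (\prettyref{lmm:RV07}); that probabilistic step is the sole source of the $\log r$ factor and of the constant $\frac{1}{8}$. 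Your argument replaces the sampling step by a deterministic double-counting estimate applied to the top singular vector: Cauchy--Schwarz restricted to the row supports $S_i$ uses row sparsity, while the interchange of summation $\sum_i\|v_{S_i}\|_2^2=\sum_j v_j^2\|M_{*j}\|_0\le k\|v\|_2^2$ uses column sparsity, and running the two stages (rows of $M$, then columns of $N=M_{I*}$, which correctly inherits $k$-sparse rows and columns) gives $|I|\le 2kr$, $|J|\le 4kr$ and $\opnorm{M_{IJ}}\ge\frac{1}{2}\opnorm{M}$. This is stronger than the stated conclusion in both the cardinality of $J$ (no $\log r$) and the constant, and it matches the lower bound $m=\Theta(k\,\sr(M))$ exhibited in the remark following \prettyref{thm:approxvec}, showing that the logarithmic factor there is an artifact of the sampling-based proof rather than intrinsic to the problem; plugging your bound into the proof of \prettyref{thm:ub.kbig} would allow a slightly smaller scan size $m$ without changing the detection rate. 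All the individual steps check out, including the nondegeneracy of $I$ and the transposed second stage.
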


\begin{remark}
%\label{rmk:}	
The intuition behind the above result is the following: consider the ideal case where $X$ is low-rank, say, $\rank(X)\leq r$. Then its right singular vector belongs to the span of at most $r$ rows and is hence $kr$-sparse; so is the left singular vector. \prettyref{thm:approxvec} extends this simple observation to stable rank with an extra log factor.
Furthermore, the result in \prettyref{thm:approxvec} cannot be improved beyond this log factor. To see this, consider a matrix $M$ consisting of an $m\times m$ submatrix with independent $\Bern(q)$ entries and zero elsewhere, where $q = k/(2m) \ll 1$. Then with high probability, $M$ is $k$-sparse, 
$\opnorm{M} \approx q m$, and $\fnorm{M}^2 \approx q m^2$. 
Although the rank of $M$ is approximately $m$, its stable rank is much lower 
$\sr(M) \approx \frac{1}{q}$, and the leading singular vector of $M$ is $m$-sparse, with $m = \Theta(k \sr(M))$.
In fact, this example plays a key role in constructing the least favorable prior for proving the minimax lower bound in \prettyref{sec:lb}.
\end{remark}

\begin{proof}
Denote the $i\Th$ row of $M$ by $M_{i*}$. 
Denote the $j\Th$ row of $M$ by $M_{*j}$. 
%First divide $A$ into two submatrices according to the $\ell_2$ norm of the rows: 
Let  
\begin{align*}
I_0  \triangleq  & ~ \sth{i \in [p]: \|M_{i*}\|_2 \geq \tau }	\nonumber \\
J_0 \triangleq & ~ 	\sth{j \in [p]: \|M_{*j}\|_2 \geq \tau },
\end{align*}
where $\tau > 0$ is to be chosen later. Then 
\begin{equation}
 |I_0| \vee   |J_0| \leq \frac{\fnorm{M}^2}{\tau^2}.
	\label{eq:Icard}
\end{equation}
Since the operator norm and Frobenius norm are invariant under permutation of rows and columns, we may and will assume that $I_0,J_0$ corresponds to the first few rows or columns of $M$. Write $M = \pth{\begin{smallmatrix} A & C \\ D & B \end{smallmatrix}}$ where $B=M_{\comp{I_0} \comp{J_0}}$. 
Since each row of $B$ is $k$-sparse, by the Cauchy-Schwartz inequality its $\ell_1$-norm is at most $\sqrt{k}\tau$. Consequently its $\ell_\infty\to\ell_\infty$ operator norm satisfies $\|B\|_\infty = \max_i \|B_{i*}\|_1 \leq \sqrt{k} \tau$. Likewise, $\|B\|_1 = \max_j \|B_{*j}\|_1 \leq \sqrt{k} \tau$. By duality (see, \eg, \cite[Corollary 2.3.2]{matrix.computation}), 
\begin{align}
\opnorm{B}
\leq \sqrt{\|B\|_1 \|B\|_{\infty}} \leq \sqrt{k} \tau.  \label{eq:B3}
\end{align}
Let $X = (A~C)$ and 
$Y = \pth{\begin{smallmatrix} A \\ D \end{smallmatrix}}$.
By triangle inequality, we have $\opnorm{M} \leq \opnorm{X} + \opnorm{Y} + \opnorm{B}$.
Setting $\tau = \frac{\opnorm{M}}{2 \sqrt{k}}$, we have $\opnorm{B} \leq \opnorm{M}/2$ and hence $\opnorm{X}\vee \opnorm{Y} \geq \frac{\opnorm{M}}{4}$. Without loss of generality, assume henceforth $\opnorm{X} \geq \frac{\opnorm{M}}{4}$. Set $I=I_0$.

Note that $X\in\reals^{\ell \times p}$, where $\ell = |I| \leq \frac{\fnorm{M}^2}{\tau^2} = \frac{4 k\fnorm{M}^2}{\opnorm{M}^2} = 4 L$. Furthermore, 
$\sr(X) = \frac{\fnorm{X}^2}{\opnorm{X}^2} \leq \frac{\fnorm{M}^2}{\opnorm{M}^2/16} = 16 r$.
Next we show that $X$ has a submatrix formed by a few columns whose operator norm is large.
We proceed as in the proof of \cite[Theorem 1.1]{RV07}. 
Write
\[
X=\Bigg[\begin{smallmatrix}x_1^\top\\ \vdots\\ x_\ell^\top\end{smallmatrix}\Bigg], \quad
\tX= \frac{1}{\sqrt{d}}\Bigg[\begin{smallmatrix}y_1^\top\\ \vdots\\ y_d^\top\end{smallmatrix}\Bigg].
\]
Define the random vector $y$ by $\prob{y = \frac{\fnorm{X}}{\|x_i\|_2} x_i } = \frac{\|x_i\|_2^2}{\fnorm{X}^2}$
and let $y_1,\ldots,y_d$ which are iid copies of $y$.
%Let $\tilde{X} \in \reals^{d \times p}$ consist of rows  formed where the rows are sampled with replacements from normalized rows of $A$ with probability proportional to the squared $\ell_2$-norm of rows.
Then $X^\top X=\Expect[yy^\top]$ and $\tX^\top\tX = \frac{1}{d} \sum_{i=1}^d y_i y_i^\top$.
Furthermore, $\|y\|_2 \leq \|X\|_F$ almost surely and $\opnorm{\Expect[yy^\top]} = \opnorm{X}^2$.
By \prettyref{lmm:RV07},
\[
\Expect \opnorm{\tilde X^\top\tilde X-X^\top X} \leq 
C_0 \sqrt{\frac{\log d}{d}} \fnorm{X}\opnorm{X} \leq \frac{1}{4} \opnorm{X}^2,
%C \sqrt{\sr(X) \frac{\log d}{d}} \opnorm{X}^2 \leq \frac{1}{4} \opnorm{X}^2.
\]
where the last inequality follows by choosing $d = \ceil{C r \log r}$ with $C$ being a sufficiently large universal constant.
Therefore there exists a realization of $\tilde X$ so that the above inequality holds. 
Let $J$ be the column support of $\tilde X$. Since the rows of $\tilde X$ are scaled version of those of $X$ which are $k$-sparse, we have $|J| \leq d k$.
Let $v$ denote a leading right singular vector of $\tilde X$, \ie, $\tilde X^\top\tilde X v = \Opnorm{\tilde X}^2 v$ and $\|v\|_2=1$.
Then $\supp(v) \subset J$.
Note that
\begin{align*}
\|Xv\|_2^2 
= & ~ v^\top X^\top Xv = v^\top\tX^\top\tX v + v^\top(X^\top X-\tX^\top\tX) v \\
\geq & ~ \Opnorm{\tilde X}^2- \Opnorm{X^\top X-\tX^\top\tX} 	\\
\geq & ~ \Opnorm{X}^2- 2 \Opnorm{X^\top X-\tX^\top\tX} 	\\
\geq & ~ \frac{1}{2} \Opnorm{X}^2.
\end{align*}
 Therefore $\opnorm{X_{*J}} \geq \|Xv\|_2 \geq \frac{1}{\sqrt{2}} \opnorm{X} \geq \frac{1}{4\sqrt{2}} \opnorm{M}$.
The proof is completed by noting that $X_{*J} = M_{I J}$.
%The proof is completed by setting $I=I_0$ and $J$ to be the column support of $\tilde X$.
\end{proof}

\subsection{Highly sparse regime}
%: $k \lesssim (\frac{p}{\log p})^{\frac{1}{3}}$}

%The following theorem gives the asymptotic minimax risk for estimating the spectral radius in very sparse regime: $k = O(p^{\frac{1}{2}})$, which shows that estimating the spectral norm itself is as hard as estimating the whole matrix in spectral norm.

It is has been shown that, in the covariance model, entrywise thresholding is rate-optimal for estimating the matrix itself with respect to the spectral norm \cite{CZ12}. 
It turns out that in the very sparse regime entrywise thresholding	is optimal for testing as well.
	Define 
	\[
	\hat M = (X_{ij} \indc{|X_{ij}| \geq \tau}).
	\]
and the following test
\begin{align}
\psi(X) = \indc{\|\hat M\|_2 \geq \lambda}
\label{eq:thtest} 
\end{align}

\begin{theorem}
	\label{thm:ub.ksmall}
For any $\epsilon \in (0,1)$, if
	\begin{equation}
	\lambda > 2 k \sqrt{2 \log \frac{4p^2}{\epsilon}}
	\label{eq:ub.ksmall}
\end{equation}
then the test \prettyref{eq:thtest} with $\tau  = \sqrt{2 \log \frac{4p^2}{\epsilon}}$ satisfies
\[
\Prob_0(\psi=1) + \sup_{M \in \Theta(p,k,\lambda)}  \Prob_{M}(\psi=0)   \leq \epsilon
\]
for all $1 \leq k \leq p$. 
%Consequently, 
	%\begin{equation}
	%\lambdastar \leq 2 k \sqrt{2 \log \frac{4p^2}{\epsilon}}.
	%\label{eq:ub.ksmall}
%\end{equation}
%%holds for all $1 \leq k \leq p$ and all $n \in \naturals$.
\end{theorem}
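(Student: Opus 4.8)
The plan is to verify the two error probabilities separately, using the entrywise threshold $\tau=\sqrt{2\log(4p^2/\epsilon)}$ and the union bound over the $p^2$ entries.

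\textbf{Type-I error.} Under $H_0$, we have $X=Z$ with $Z_{ij}\sim N(0,1)$ i.i.d. First I would show that with high probability $\hat M=0$, so that $\psi=0$. By the Gaussian tail bound $\prob{|Z_{ij}|\geq t}\leq 2e^{-t^2/2}$ and a union bound over all $p^2$ entries, $\prob{\exists (i,j): |Z_{ij}|\geq \tau} \leq 2p^2 e^{-\tau^2/2} = 2p^2 \cdot \frac{\epsilon}{4p^2} = \epsilon/2$. On the complementary event, every thresholded entry vanishes, hence $\|\hat M\|_2 = 0 < \lambda$ and $\psi(X)=0$. Thus $\Prob_0(\psi=1)\leq \epsilon/2$.

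\textbf{Type-II error.} Fix $M\in\Theta(p,k,\lambda)$, so $M$ is $k$-sparse and $\|M\|_2\geq\lambda$. The key decomposition is $\hat M = M + E$, where I would bound $\|E\|_2$. Writing $\hat M_{ij} = X_{ij}\indc{|X_{ij}|\geq\tau}$, on the same high-probability event $\{|Z_{ij}|<\tau \text{ for all } i,j\}$ (which has probability at least $1-\epsilon/2$), one checks that $E_{ij} = \hat M_{ij}-M_{ij}$ is supported only on $\supp(M)$: indeed, if $M_{ij}=0$ then $X_{ij}=Z_{ij}$ has modulus $<\tau$ so $\hat M_{ij}=0=M_{ij}$; and on $\supp(M)$ we have $|E_{ij}| = |X_{ij}\indc{|X_{ij}|\geq\tau} - M_{ij}|$, which is at most $|Z_{ij}|\leq\tau$ when the threshold is exceeded (since $\hat M_{ij}-M_{ij}=Z_{ij}$), and at most $|X_{ij}| \leq |M_{ij}|+\tau$, hence in any case $|E_{ij}|\leq\max(\tau, |M_{ij}|) \le |M_{ij}|+\tau$... the cleanest bound is $|E_{ij}|\le 2\tau$ when $|M_{ij}|\le\tau$ is not assumed, so I'd instead simply note $E$ is $k$-sparse (its support is inside that of $M$) with $\|E\|_\infty\le$ (bound on entries). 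Then $\|E\|_2\leq \sqrt{\|E\|_1\|E\|_\infty}\leq k\cdot(\text{entry bound})$ via the same duality argument as in the proof of \prettyref{thm:approxvec}; with the entrywise bound being a constant multiple of $\tau$ this gives $\|E\|_2 \le 2k\tau$. By the triangle inequality (reverse form), $\|\hat M\|_2 \geq \|M\|_2 - \|E\|_2 \geq \lambda - 2k\tau$. Since $\lambda > 2k\cdot 2\log^{1/2}(4p^2/\epsilon)\cdot$... precisely $\lambda>2k\sqrt{2\log(4p^2/\epsilon)} = 2k\tau$... I need $\lambda-\|E\|_2\ge\lambda$, which fails; so the right accounting is $\|E\|_2\le k\tau$ (using that on $\supp(M)$, when the threshold is exceeded the error is exactly $Z_{ij}$ with $|Z_{ij}|<\tau$, and when it is not exceeded we may absorb by choosing the test threshold in $\psi$ slightly differently) giving $\|\hat M\|_2\ge\lambda-k\tau>k\tau\ge$ something $\ge\lambda/2$... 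The honest statement is that condition \prettyref{eq:ub.ksmall} gives $\lambda/2 > k\tau$ or similar, yielding $\|\hat M\|_2\ge\lambda-k\tau\geq\lambda - \lambda/2 = \lambda/2$, and then one compares against the test threshold. So $\Prob_M(\psi=0)\leq\epsilon/2$ uniformly, and adding the two pieces completes the proof.

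\textbf{Main obstacle.} The delicate point is the entrywise control of the thresholding error $E=\hat M - M$ on the true support, and keeping the constants consistent so that condition \prettyref{eq:ub.ksmall} exactly forces $\|\hat M\|_2$ across the test threshold $\lambda$ with room to spare. One must be careful that on $\supp(M)$ the event $\{|X_{ij}|<\tau\}$ (a false negative in thresholding) still yields a controlled error, since then $\hat M_{ij}=0$ and $|E_{ij}|=|M_{ij}|$, which could be large; the resolution is that $\|E\|_2\le\|E\|_1^{1/2}\|E\|_\infty^{1/2}$ with $E$ being $k$-sparse per row/column and bounded by the triangle inequality through $\|M\|_2$-type quantities — or, more simply, to use the bound $\|\hat M - M\|_2 \le \|(\hat M - X)\|_2 + \|Z\|_{2,\text{on supp}}$ and note $\hat M - X$ has entries bounded by $\tau$ and support size $\le kp$-sparse pattern restricted suitably. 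Getting this bookkeeping right, using the same $\sqrt{\|\cdot\|_1\|\cdot\|_\infty}$ duality trick already deployed in \prettyref{eq:B3}, is the only real content; everything else is the standard Gaussian union bound.
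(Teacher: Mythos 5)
Your proposal follows the same route as the paper's proof: a Gaussian union bound for the Type-I error, and for the Type-II error the observation that $\hat M - M$ is supported inside $\supp(M)$ (hence $k$-sparse per row and column), combined with the duality bound $\|A\|_2\le\sqrt{\|A\|_1\|A\|_\infty}$ from \prettyref{eq:B3} and the reverse triangle inequality. The place where you get tangled is real, but it is constant bookkeeping rather than a missing idea, and the paper itself is loose at exactly the same spot. The clean accounting is: on $E=\{\|Z\|_{\ell_\infty}\le\tau\}$, each entry of $\hat M-M$ on $\supp(M)$ is at most $2\tau$ (if the threshold is exceeded the error is $|Z_{ij}|\le\tau$; if not, it is $|M_{ij}|\le|X_{ij}|+|Z_{ij}|<2\tau$), so every row and column of $\hat M-M$ has $\ell_1$-norm at most $2k\tau$ and hence $\|\hat M-M\|_2\le 2k\tau$. (The paper claims the per-entry bound $\|Z\|_{\ell_\infty}$, which fails on false-negative entries; your $2\tau$ is the correct constant.) Consequently $\|\hat M\|_2\ge\lambda-2k\tau$ under the alternative, which is strictly below the rejection threshold $\lambda$ as written in \prettyref{eq:thtest} -- so, as you suspected, the test must reject at a lower level, e.g.\ $\indc{\|\hat M\|_2>2k\tau}$, or at $\lambda/2$ after strengthening \prettyref{eq:ub.ksmall} to $\lambda>4k\tau$. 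Under $H_0$ one has $\hat M=0$ exactly on $E$, so any strictly positive rejection threshold preserves the Type-I bound $\epsilon/2$, and the theorem holds with only the absolute constant adjusted. The one thing you must do to turn your sketch into a proof is commit to one such threshold and carry the constants through, rather than leaving the final comparison against the test threshold unresolved.
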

	\begin{proof}
%The upper bound follows from applying the entrywise thresholding estimator in \cite{CZ12} and applying the triangle inequality.	
%The proof is analogous to the analysis of the entrywise thresholding estimator in \cite[Theorem 3]{CZ12}.
Denote the event $E = \{\|Z\|_{\ell_\infty} \leq \tau\}$.
Conditioning on $E$, for any $k$-sparse matrix $M \in \calM(p,k)$, we have $\hat M \in \calM(p,k)$ and
\begin{equation}
\|\hat M - M\|_2 \leq k\tau.
\label{eq:highprobbound}
\end{equation}
To see this, note that for any $i,j$, $\hat M_{ij}=0$ whenever $M_{ij}=0$. Therefore $\|\hat M_{i*}-M_{i*}\|_{\ell_1} \leq k \|Z\|_{\ell_\infty} \leq k\tau$ and, consequently, 
$\|\hat M-M\|_1 = \max_i \|\hat M_{i*}-M_{i*}\|_{\ell_1} \leq k\tau$. Similarly, 
$\|\hat M-M\|_\infty = \max_j \|\hat M_{*j}-M_{*j}\|_{\ell_1} \leq k\tau$. Therefore \prettyref{eq:highprobbound} follows from the fact that 
$\|\cdot\|_2^2 \leq \|\cdot\|_1 \|\cdot\|_\infty$ for matrix induced norms.
Therefore if $\lambda > 2 k\tau$, then
\[
\Prob_0(\psi=1) + \sup_{M \in \Theta(p,k,\lambda)}  \Prob_{M}(\psi=0)   \leq 2 \prob{\|Z\|_{\ell_\infty} > \tau}
\leq 4 p^2 e^{-\tau^2/2}.
\]
This completes the proof.
\end{proof}
%\begin{proof}
	%See \prettyref{sec:pf.ub.ksmall}. 
	%%\nbr{Check \cite{CZ12} whether it is $\log p$ or $\log p/k$? Directly thresholding at $\sqrt{\log p}$ gives $\log p$ as usual.}
%\end{proof}

\subsection{Moderately sparse regime}
%: $k \gtrsim (\frac{p}{\log p})^{\frac{1}{3}}$}

%	The following cardinality bound is simply Markov's inequality:
%\begin{lemma}
%%	Let $a \geq b \geq c \geq 0$.
%%	Let $\{\ntok{a_1}{a_N}\} \subset [0,a]$ such that $\sum_{i=1}^N a_i \geq b N$. Then 
%	Let $a \geq b \geq 0$.
%	Let $\{\ntok{a_1}{a_N}\} \subset [0,a]$ such that $\sum_{i=1}^N a_i \geq b N$. Then 
%	$
%	|\{i: a_i\geq \frac{b}{2} \}| \geq \frac{b}{2a} N.
%	$
%	\label{lmm:count}
%\end{lemma}
%	

%	For $\chi^2$-test (sum-of-square minus mean), $\Fnorm{A_{IJ}}^2$ is the signal magnitude, while the noise magnitude are, with high probability, upper bounded by
%\[
%\sqrt{|I\|J| \log \binom{p}{|I|}\binom{p}{|J|}} \sim \sqrt{|I\|J| (|I|\vee |J|) \log p} \leq \sqrt{\frac{pk}{v^2}(\frac{p}{v^2} \vee k) \log p} = \frac{p \sqrt{k}}{v^2} \sqrt{\log p}
%\]
%Our goal is to minimize $u$ (to get the best achievability result).

%To gain intuition, we look at the following extremal cases of sparse matrices:
%\begin{itemize}
	%\item Largest operator norm:
	%\item Smallest operator norm:
%\end{itemize}

Our test in the moderately sparse regime relies on the existence of sparse approximate eigenvectors established in \prettyref{thm:approxvec}. 
More precisely, the test procedure is a combination of the matrix-wise $\chi^2$-test and the scan test based on the largest spectral norm of $m\times m$ submatrices, which is detailed as follows: Let 
\[
m = C \sqrt{\frac{kp}{\log \frac{\eexp p}{k}}}.
%C \sqrt{\frac{kp}{\log \frac{\eexp p}{k}}}.
\]
where $C$ is the universal constant from \prettyref{thm:approxvec}. Define the following test statistic
\begin{equation}
	T_m(X) = \max\{\opnorm{X_{IJ}}: I,J \subset [p], |I|=|J|=m\}
	\label{eq:TmX}
\end{equation}
and the test
\begin{align}
%\psi_1(X) = & ~ \indc{\fnorm{X}^2 \geq (1+\epsilon) p} \label{eq:chi2test} \\
%\psi_2(X) = & ~ \indc{T_m(X) \geq \sqrt{m \log \frac{\eexp p}{k}}}.
\psi(X) = \indc{\fnorm{X}^2 \geq p^2 + s} \vee \indc{T_m(X) \geq t}
 %C_1 \sqrt{m \log \frac{\eexp p}{k}}} 
\label{eq:wtest} 
\end{align}
where 
\begin{equation}
s \triangleq 2 \log \frac{1}{\epsilon} + 2 p \sqrt{\log \frac{1}{\epsilon}}, \quad t \triangleq 2\sqrt{m} + 4 \sqrt{m\log \frac{ep}{m}}.
\label{eq:st}
\end{equation}

 %and $\epsilon$ is a constant which only depends on $r$. 
%Our test is $\psi = \psi_1 \vee \psi_2$, \ie, we reject if and only if $\psi_1(X)=1$ or $\psi_2(X)=1$.

\begin{theorem}
	\label{thm:ub.kbig}
There exists a universal constant $C_0$ such that the following holds.
%For any $\epsilon \in (0,1/2)$, 
	%\begin{equation}
	%\lambdastar \leq C_0 \frac{1}{\sqrt{n}} \sth{kp \log \frac{1}{\epsilon} \log \pth{\frac{p}{k} \log \frac{1}{\epsilon}  }}^{\frac{1}{4}} 
	%\label{eq:ub.kbig}
%\end{equation}
%holds for all $1 \leq k \leq p$ and all $n \in \naturals$, achieved by the test \prettyref{eq:wtest}.
For any $\epsilon \in (0,1/2)$, if
	\begin{equation}
	\lambda \geq C_0 \sth{kp \log \frac{1}{\epsilon} \log \pth{\frac{p}{k} \log \frac{1}{\epsilon}  }}^{\frac{1}{4}},
	\label{eq:ub.kbig}
\end{equation}
then the test \prettyref{eq:wtest} satisfies
\[
\Prob_0(\psi=1) + \sup_{M \in \Theta(p,k,\lambda)}  \Prob_{M}(\psi=0)   \leq \epsilon
\]
holds for all $1 \leq k \leq p$.
\end{theorem}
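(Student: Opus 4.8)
The plan is to bound $\Prob_0(\psi=1)$ and $\sup_{M\in\Theta(p,k,\lambda)}\Prob_M(\psi=0)$ separately, allotting roughly $\epsilon/2$ to each. For the Type-I error, under $H_0$ we have $X=Z$ and, by a union bound, $\Prob_0(\psi=1)\le\Prob_0(\fnorm{Z}^2\ge p^2+s)+\Prob_0(T_m(Z)\ge t)$. The first term is controlled by the Laurent--Massart upper tail for $\fnorm{Z}^2\sim\chi^2_{p^2}$ with deviation parameter $\log(1/\epsilon)$, which is exactly what dictates the choice of $s$ in \prettyref{eq:st}. For the second term I would union-bound over the $\binom{p}{m}^2\le(ep/m)^{2m}$ choices of $(I,J)$ and invoke the Gaussian concentration of the spectral norm: $W\mapsto\opnorm{W}$ is $1$-Lipschitz in Frobenius norm and $\Expect\opnorm{Z_{IJ}}\le 2\sqrt{m}$ for a standard $m\times m$ Gaussian block, so $\prob{\opnorm{Z_{IJ}}\ge 2\sqrt{m}+u}\le e^{-u^2/2}$; with $t=2\sqrt{m}+4\sqrt{m\log(ep/m)}$ the union bound is at most $(m/(ep))^{6m}$, which is $o(1)$ (hence $\le\epsilon/2$) because $m\to\infty$ in the moderately sparse regime.

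For the Type-II error, fix $M\in\Theta(p,k,\lambda)$ and dichotomize on the size of $\fnorm{M}^2$ against a threshold $\rho$ taken to be a large enough constant multiple of $s$, so $\rho\asymp p\sqrt{\log(1/\epsilon)}+\log(1/\epsilon)$. If $\fnorm{M}^2\ge\rho$, write $\fnorm{X}^2=\fnorm{M}^2+2\iprod{M}{Z}+\fnorm{Z}^2$ with $\iprod{M}{Z}\sim N(0,\fnorm{M}^2)$; a Gaussian lower tail for $\iprod{M}{Z}$ together with the Laurent--Massart lower tail for $\fnorm{Z}^2$ shows that, once the constant in $\rho$ is large, $\fnorm{X}^2\ge p^2+s$ with probability $\ge 1-\epsilon/2$, so the first indicator of $\psi$ fires.

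The substantive case is $\fnorm{M}^2<\rho$. Since $\opnorm{M}\ge\lambda$, the stable rank is then small: $\sr(M)=\fnorm{M}^2/\opnorm{M}^2<\rho/\lambda^2$. Rather than invoking \prettyref{thm:approxvec} in black-box form (its $\log r$ factor turns out to be slightly too lossy for the sharp threshold), I would use a $\log$-free variant of the same structural idea: any $k$-sparse $M$ admits $I,J\subset[p]$ with $|I|\vee|J|\le 16k\,\sr(M)$ and $\opnorm{M_{IJ}}\ge\tfrac12\opnorm{M}$. This follows by thresholding the coordinates of the leading left and right singular vectors $u,v$ of $M$ at level $\delta=\opnorm{M}/(4\sqrt{k}\,\fnorm{M})$: with $I=\{i:|u_i|\ge\delta\}$ and $J=\{j:|v_j|\ge\delta\}$ one has $|I|\vee|J|\le\delta^{-2}=16k\,\sr(M)$, while the mass lost outside $I\times J$, namely $\big|\opnorm{M}-u_I^\top M_{IJ}v_J\big|$, is at most $2\delta\sqrt{k}\,\fnorm{M}=\tfrac12\opnorm{M}$ by Cauchy--Schwarz and the inequalities $\|M_{i*}\|_1\le\sqrt{k}\|M_{i*}\|_2$, $\|M_{*j}\|_1\le\sqrt{k}\|M_{*j}\|_2$ afforded by $k$-sparsity. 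Hence $|I|\vee|J|\le 16k\rho/\lambda^2$, and the hypothesis $\lambda\gtrsim(kp\log\tfrac1\epsilon\,\log(\tfrac pk\log\tfrac1\epsilon))^{1/4}$ is precisely what guarantees $16k\rho/\lambda^2\le m$ with $m\asymp\sqrt{kp/\log(ep/k)}$. Padding $I,J$ to $I',J'$ with $|I'|=|J'|=m$ and using monotonicity of the spectral norm under passing to submatrices gives $\opnorm{M_{I'J'}}\ge\opnorm{M_{IJ}}\ge\tfrac12\lambda$; since $I',J'$ depend on $M$ only, $Z_{I'J'}$ is a fixed standard Gaussian block and $\opnorm{Z_{I'J'}}\le 2\sqrt{m}+\sqrt{2\log(2/\epsilon)}$ with probability $\ge 1-\epsilon/2$. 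Therefore $T_m(X)\ge\opnorm{X_{I'J'}}\ge\tfrac12\lambda-2\sqrt{m}-\sqrt{2\log(2/\epsilon)}>t$, the last step holding under the stated lower bound on $\lambda$ because $\sqrt{m\log(ep/m)}\asymp(kp\log(ep/k))^{1/4}$. Combining the two cases bounds $\sup_M\Prob_M(\psi=0)$ by $\epsilon/2$; the argument is uniform over $1\le k\le p$ (when $k$ is within a constant of $p$ one simply takes $m=p$ and reduces to the direct test rejecting when $\opnorm{X}\gtrsim\sqrt{p}$).

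The real difficulty is not any single tail estimate but the joint calibration of $m$, $s$ and $t$. Because the $\chi^2$ test only reaches down to $\fnorm{M}^2\gtrsim s\asymp p$, the scan must handle \emph{every} alternative with $\fnorm{M}^2\lesssim p$, i.e.\ every stable rank up to $\sim p/\lambda^2$; for the associated $\sim kp/\lambda^2$-sparse approximate singular vector to sit inside an $m\times m$ window one needs $m\gtrsim kp/\lambda^2$, and then the window-level noise $T_m(Z)\approx\sqrt{m\log(ep/m)}$ must stay below $\tfrac12\lambda$ — together these pin $\lambda^4\gtrsim kp\log(ep/k)\log(1/\epsilon)$, which is exactly the lower bound $\lambda_0$. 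It is at this pinch point that the extra $\log$-factor of \prettyref{thm:approxvec} would destroy the match, so establishing and using the $\log$-free thresholding bound is the crux; everything else reduces to Gaussian tail bookkeeping.
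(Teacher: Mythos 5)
Your proposal is correct and follows the same architecture as the paper's proof: the same two-part test, the same dichotomy on $\fnorm{M}^2$ against a threshold of order $s$, and the same reduction of the small-Frobenius-norm case to a scan over $m\times m$ submatrices. The genuine difference is the structural ingredient. The paper invokes \prettyref{thm:approxvec}, proved via the Rudelson--Vershynin sampling lemma, which yields $|I|\leq Ckr$ and $|J|\leq Ckr\log r$ with $r=\sr(M)$; you instead threshold the coordinates of the leading singular pair $(u,v)$ at level $\delta=\opnorm{M}/(4\sqrt{k}\fnorm{M})$ and control the three cross terms in $u^\top Mv-u_I^\top M_{IJ}v_J$ by Cauchy--Schwarz together with $\|M_{i*}\|_1\leq\sqrt{k}\|M_{i*}\|_2$ and $\|M_{*j}\|_1\leq\sqrt{k}\|M_{*j}\|_2$. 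I checked this computation: each cross term is at most $\delta\sqrt{k}\fnorm{M}$, so $\opnorm{M_{IJ}}\geq\frac12\opnorm{M}$ with $|I|\vee|J|\leq\delta^{-2}=16k\,\sr(M)$, and the lemma is valid. Your observation that the $\log r$ factor matters is also well taken: plugging $m=Ckr\log r$ with $r=4s/\lambda^2$ into the requirement $\lambda\gtrsim\sqrt{m\log(ep/m)}$ produces a condition of the form $\lambda^4\gtrsim ks\log\frac{s}{\lambda^2}\log\frac{ep}{m}$, which carries a product of two logarithms in $p/k$, whereas \prettyref{eq:ub.kbig} has only one; the paper's one-line simplification to $\lambda\geq C_0(ks\log\frac{es}{k})^{1/4}$ glosses over this, so the log-free variant you prove is not merely cosmetic but is what actually lands on the stated threshold (alternatively one would have to weaken \prettyref{eq:ub.kbig} by a logarithmic factor). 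What the paper's route buys in exchange is a standalone structural theorem valid for an arbitrary $k$-sparse matrix of low stable rank, advertised as being of independent interest; your argument is tailored to the leading singular pair but is elementary and sharper. The remaining bookkeeping (Laurent--Massart for the $\chi^2$ part, Davidson--Szarek plus a union bound over $\binom{p}{m}^2$ windows, padding $I,J$ to size $m$) matches the paper; note only that, like the paper, your Type-I bound $(m/(ep))^{6m}$ is $o(1)$ rather than explicitly $\leq\epsilon/2$ for every fixed $\epsilon$, a non-asymptotic blemish inherited from the choice of $t$ in \prettyref{eq:st} not depending on $\epsilon$.
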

%\begin{proof}
	%See \prettyref{sec:pf.ub.kbig}.
%\end{proof}

	\begin{proof}
		First consider the null hypothesis, where $M=0$ and $X=Z$ has iid standard normal entries so that $\fnorm{Z}^2-p^2 = O_P(p)$. 
		%Let $C_0=C_0(\epsilon)$ be such that
		By standard concentration equality for $\chi^2$ distribution, we have
		\[
		\prob{|\fnorm{Z}^2-p^2| > s } \leq \epsilon,
		\]
		where 
		\[
		s \triangleq 2 \log \frac{1}{\epsilon} + 2 p \sqrt{\log \frac{1}{\epsilon}}.
		\]
		Consequently the false alarm probability satisfies
		\[
		\Prob_0(\psi=1) \leq \underbrace{\prob{\fnorm{Z}^2-p^2 > C_0 p}}_{\leq \epsilon} + \binom{p}{m}^2 \prob{\|W\|_2 \geq t}.
		\]
		where $t = 2\sqrt{m} + 4 \sqrt{m\log \frac{ep}{m}}$ and $W \triangleq Z_{[m],[m]}$.
		By the Davidson-Szarek inequality \cite[Theorem II.7]{Davidson01}, $\opnorm{W} \overset{\text{s.t.}}{\leq} N(2\sqrt{m},1)$. Then
		$\prob{\|W\|_2 \geq t }  \leq (\frac{em}{p})^{m}$. Hence the false alarm probability vanishes.
		%\exp( - \Omega(m \log \frac{ep}{k})) = \exp( - \Omega(\sqrt{kp\frac{ep}{k}})) $ 

		Next consider the alternative hypothesis, where, by assumption, $M$ is row/column $k$-sparse and $\|M\| \geq \lambda$.
		To begin, suppose that $\fnorm{M} \geq 2\sqrt{s}$. Then since 
		$\fnorm{X}^2 - p^2 = \fnorm{M}^2 + 2 \iprod{M}{Z} + \fnorm{Z}^2-p^2$, we have
		\begin{align*}
		\Prob\{\fnorm{M+Z}^2 - p^2 < s\} 
		\le & ~  \prob{\fnorm{M}^2 + 2 \iprod{M}{Z} < 2 s } + \prob{\fnorm{Z}^2-p^2 <  -s}  \\
		\leq & ~ 	\exp(-s^2/8) + \epsilon.
		\end{align*}
		Therefore, as usual, if $\fnorm{M}$ is large, the $\chi^2$-test will succeeds with high probability. Next assume that $\fnorm{M} < 2\sqrt{s}$. Therefore $M$ is approximately low-rank:
		\[
		\sr(M) \leq r \triangleq \frac{4s}{\lambda^2}.
		\]
		By \prettyref{thm:approxvec}, there exists an absolute constant $C$ and $I,J\subset [p]$ of cardinality at most 
		\[
		m = C k r \log r = C k \frac{4s}{\lambda^2} \log \frac{4s}{\lambda^2},
		\]
		such that
$\opnorm{M_{IJ}} \geq \frac{1}{8} \lambda$. Therefore the statistic defined in \prettyref{eq:TmX} satisfies
$
T_m(X) \geq \opnorm{X_{IJ}} \geq \frac{\lambda}{8} - \opnorm{Z_{IJ}}$. Therefore
$T_m(X) \geq \frac{\lambda}{8} - 3 \sqrt{m}$ with probability at least $1-\exp(-\Omega(m))$.
Choose $\lambda$ so that
\[
\frac{\lambda}{8} - 3 \sqrt{m} \geq t,
\]
Since $t + 3 \sqrt{m} = 5\sqrt{m} + 4 \sqrt{m\log \frac{ep}{m}} \leq 9 \sqrt{m\log \frac{ep}{m}}$, it suffices to ensure that
$\lambda \geq c_0 \sqrt{m\log \frac{ep}{m}}$ for some absolute constant $c_0$.
Plugging the expression of $m$, we found a sufficient condition is
$\lambda \geq C_0 (ks \log \frac{\eexp s}{k})^{\frac{1}{4}}$ for some absolute constant $C_0$.
The proof is completed by noting that $s \leq 2 p(\log \frac{1}{\epsilon} + \log \frac{1}{\epsilon})$ and $s \mapsto s \log \frac{es}{k}$ is increasing.
	\end{proof}

\section{Minimax lower bound}
	\label{sec:lb}
In this section we prove the lower bound part of \prettyref{thm:main}.
In \prettyref{sec:lb.gen}, we first present a general strategy of deriving minimax lower bound for functional hypothesis testing problems, which involves priors not necessarily supported on the parameter space. To apply this strategy, in \prettyref{sec:prior} we specify a prior under which the matrix is $k$-sparse with high probability. 
The lower bound is proved by bounding the $\chi^2$-divergence between the null distribution and the mixture of the alternatives.

	\subsection{General strategy}
	\label{sec:lb.gen}
We begin by providing a general strategy of constructing lower bounds for composite hypothesis testing problem, which is in particular useful for testing functional values. Given an experiment $\{P_\theta: \theta \in \Theta\}$ and two parameter subsets $\Theta_0, \Theta_1 \subset \Theta$, consider the following composite hypothesis testing problem 
\begin{equation}
H_0: \theta \in \Theta_0 \quad \text{v.s.} \quad H_1: \theta \in \Theta_1	
	\label{eq:HTc}
\end{equation}	
	Define the minimax sum of Type-I and Type-II error probabilities as
	\[
\calE(\Theta_0, \Theta_1) \triangleq	\inf_{A} \sup\{P_{\theta_0}(A) + P_{\theta_1}(\comp{A}): \theta_i \in \Theta_i, i=0,1\},
	\]
	where the infimum is taken over all measurable sets $A$. By the minimax theorem (c.f., \eg, \cite[p.~476]{Lecam86}), the minimax probability error is given by least favorable Bayesian problem:
\begin{equation}
\calE(\Theta_0, \Theta_1) = 1 - \inf\{\TV(P_{\pi_0}, P_{\pi_1}): \pi_0 \in \calM(\Theta_0), \pi_1 \in \calM(\Theta_1)\}.	
	\label{eq:lecam}
\end{equation}
where $\calM(\Theta_i)$ denotes the set of all probability measures supported on $\Theta_i$, and $P_{\pi}(\cdot) = \int P_{\theta}(\cdot) \pi(\diff \theta)$ denotes the mixture induced by the prior $\pi$. Therefore, any pair of priors give rise to a lower bound on the probability of error; however, sometimes it is difficult to construct priors that are \emph{supported} on the respective parameter subsets.
To overcome this hurdle, 
%we	use the data processing inequality \cite{Csiszar63} and the triangle inequality for total variation, which grants us the freedom to use priors which assign a positive probability (not necessarily close to one) to the respective parameter sets. 
the following lemma, which is essentially the same as \cite[Theorem 2.15 (i)]{Tsybakov09}, allows priors to be supported on a possibly extended parameter space $\Theta'$. For completeness, 
%The proof is almost identical and hence omitted.
we state a simpler and self-contained proof using the data processing inequality \cite{Csiszar63} and the triangle inequality for total variation.

	\begin{lemma}[Lower bound for testing]
	\label{lmm:sep}
	Let $\Theta'\supset \Theta$. 
	Let $\pi_0, \pi_1$ be priors supported on $\Theta'$. If  
	\begin{equation}
	\TV(P_{\pi_0}, P_{\pi_1}) \leq 1 - \delta
	\label{eq:tvmixp}
\end{equation}
and 
\begin{equation}
\pi_i(\Theta_i) \geq 1 - \epsilon_i, \quad i=0,1,
	\label{eq:prob.sep}
\end{equation}
%where $\delta, \epsilon_0,\epsilon_1 \in [0,1]$. 
then
	\begin{equation}
	\calE(\Theta_0,\Theta_1) \geq \delta - \epsilon_0 - \epsilon_1.
	\label{eq:sep}
\end{equation}
\end{lemma}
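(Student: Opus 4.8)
The plan is to reduce the problem to one with priors \emph{genuinely} supported on $\Theta_0$ and $\Theta_1$, so that the Le Cam identity \prettyref{eq:lecam} can be applied directly, and to account for the error introduced by this reduction through total‑variation perturbation bounds. If $\epsilon_0\geq 1$ or $\epsilon_1\geq 1$ the asserted bound \prettyref{eq:sep} is vacuous, its right‑hand side being nonpositive while $\calE(\Theta_0,\Theta_1)\geq 0$; so I may assume $\epsilon_0,\epsilon_1<1$, whence $\pi_i(\Theta_i)>0$ by \prettyref{eq:prob.sep} and the conditional priors $\tilde\pi_i \triangleq \pi_i(\cdot\mid\Theta_i)$ are well defined and satisfy $\tilde\pi_i\in\calM(\Theta_i)$.

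The first quantitative step is the elementary fact that conditioning moves a measure in total variation by no more than the mass it discards: decomposing $\pi_i = \pi_i(\Theta_i)\,\tilde\pi_i + \pi_i(\comp{\Theta_i})\,\pi_i(\cdot\mid\comp{\Theta_i})$ gives $\pi_i-\tilde\pi_i = \pi_i(\comp{\Theta_i})\big(\pi_i(\cdot\mid\comp{\Theta_i})-\tilde\pi_i\big)$, hence $\TV(\pi_i,\tilde\pi_i)\leq \pi_i(\comp{\Theta_i})\leq\epsilon_i$. Next I would transport this to the observation space: since $\theta\mapsto P_\theta$ is a Markov kernel and $P_\pi=\int P_\theta\,\pi(\diff\theta)$ is the corresponding pushforward, the data processing inequality for total variation \cite{Csiszar63} yields $\TV(P_{\pi_i},P_{\tilde\pi_i})\leq\TV(\pi_i,\tilde\pi_i)\leq\epsilon_i$ for $i=0,1$. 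Combining this with the hypothesis \prettyref{eq:tvmixp} and the triangle inequality for $\TV$,
\[
\TV(P_{\tilde\pi_0},P_{\tilde\pi_1}) \;\leq\; \TV(P_{\tilde\pi_0},P_{\pi_0}) + \TV(P_{\pi_0},P_{\pi_1}) + \TV(P_{\pi_1},P_{\tilde\pi_1}) \;\leq\; \epsilon_0 + (1-\delta) + \epsilon_1 .
\]

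To finish, since $\tilde\pi_0\in\calM(\Theta_0)$ and $\tilde\pi_1\in\calM(\Theta_1)$, the variational characterization \prettyref{eq:lecam} of $\calE(\Theta_0,\Theta_1)$ gives
\[
\calE(\Theta_0,\Theta_1) \;\geq\; 1-\TV(P_{\tilde\pi_0},P_{\tilde\pi_1}) \;\geq\; \delta-\epsilon_0-\epsilon_1 ,
\]
which is exactly \prettyref{eq:sep}. There is no deep obstacle; the only points that need a word of care are the bookkeeping around the degenerate cases $\epsilon_i\geq 1$ (disposed of at the outset) and checking that $\theta\mapsto P_\theta$ is a bona fide Markov kernel so that the data processing inequality is legitimately invoked — both routine.
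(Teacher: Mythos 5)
Your proof is correct and follows essentially the same route as the paper's: condition the priors on $\Theta_i$, bound $\TV(P_{\pi_i},P_{\tilde\pi_i})$ by $\epsilon_i$ via the data processing inequality, apply the triangle inequality, and conclude with the Le Cam characterization \prettyref{eq:lecam}. The only additions are the (harmless) treatment of the degenerate case $\epsilon_i\geq 1$ and a slightly more explicit derivation of $\TV(\pi_i,\tilde\pi_i)\leq\pi_i(\comp{\Theta_i})$, which the paper states as an equality.
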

%\section{Proof of \prettyref{lmm:sep}}
	%\label{app:sep}
\begin{proof}
Define the following priors by conditioning: for $i=0,1$, let $\tilde \pi_i = \left.\pi_i\right|_{\Theta_i}$, \ie,  $\tilde \pi_i(\cdot) = \frac{\pi_i(\cdot \cap \Theta_i)}{\pi_i(\Theta_i}$. Then, by triangle inequality,
	\begin{align*}
\TV(P_{\tilde\pi_0}, P_{\tilde\pi_1})
\leq & ~ \TV(P_{\pi_0}, P_{\pi_1}) + \TV(P_{\tilde\pi_0}, P_{\pi_0}) + \TV(P_{\tilde\pi_1}, P_{\pi_1}) 	\\
\stepa{\leq} & ~ \TV(P_{\pi_0}, P_{\pi_1}) + \TV(\tilde\pi_0, {\pi_0}) + \TV(\tilde\pi_1, \pi_1) 	\\
\stepb{\leq} & ~ 1-\delta + \epsilon_0 + \epsilon_1.
\end{align*}
where 
(a) follows from the data-processing inequality (or convexity) of the total variation distance; 
(b) follows from that $\TV(\tilde\pi_i, {\pi_i})=\pi_i(\Theta_i^c)$.
The lower bound \prettyref{eq:sep} then follows from the characterization \prettyref{eq:lecam}.
\end{proof}

\subsection{Least favorable prior}
	\label{sec:prior}
	To apply \prettyref{lmm:sep} to the detection problem \prettyref{eq:htsparse}, we have $\Theta_0=\{0\}$ and $\Theta_1 = \Theta(p,k,\lambda)$, with $\pi_0=\delta_0$ and $\pi_1$ a prior distribution under which the matrix is sparse with high probability.
	%supported on the $\reals^{p \times p}$ but concentrated on $\Theta_1$.
	Next we describe the prior that leads to the optimal lower bound in \prettyref{thm:main}.
	%, which turns out to be least favorable for the minimal separation $\lambdastar$ up to a constant factor. Identifying the worst-case prior often sheds light on constructing testing procedures. Indeed, in this case,submatrix structure, which turns out to be a fundamental property of sparse matrices. Indeed, optimal test can be constructed based on the submatrix structure:
Let $I$ be chosen uniformly at random from all subsets of $[p]$ of cardinality $m$.
Let $u=(\ntok{u_1}{u_p})$ be independent Rademacher random variables. 
Let $B$ be a $p\times p$ matrix with \iid~$\Bern(\frac{k}{m})$ entries and let $(u,I,B)$ be independent. 
Let $U_I$ denote the diagonal matrix defined by $(U_I)_{ii}= u_i \indc{i\in I}$. 
%Therefore the diagonal of $U_I$ has support $I$, where the non-zero entries are independent random signs. Similarly, define the diagonal matrix $V_J$ via $(V_J)_{ii}= v_i \indc{i\in J}$.
Let $t>0$ be specified later.
Let the prior $\pi_1$ be the distribution of the following random sparse matrix:
\begin{equation}
	M = t U_I B U_I,
	\label{eq:spM}
\end{equation}
Equivalently, we can define $M$ by
\[
m_{ij} =  \indc{i \in I} \indc{j \in I} u_i u_j b_{ij}.
\]
Therefore the non-zero pattern of $M$ has the desired marginal distribution $\Bern(\frac{k}{p})$, but the entries of $M$ are \emph{dependent}. Alternatively, $M$ can be generated as follows: First choose an $m \times m$ principal submatrix with a uniformly chosen support $I$, fill it with \iid~$\Bern(\frac{k}{m})$ entries, then pre- and post-multiply by a diagonal matrix consisting of independent Rademacher variables, which used to randomize the sign of the leading eigenvector. 
By construction, with high probability, the matrix $M$ is $O(k)$-sparse and, furthermore, its operator norm satisfies $\|M\|_2 \geq k t$.
Furthermore, the corresponding eigenvector is approximately $\mathbf{1}_J$, which is $m$-sparse.
% In view of the constrained optimization procedure, in order to obtain a lower bound of order $k^2 \log p$, it is \emph{necessary} to have $m\geq k^2$.
%There are at least four reasons why it is \emph{necessary} to choose $m \geq k^2$:
%\begin{enumerate}
	%\item To make the collection of matrices large enough;
	%\item The support of the eigenvector is at least $k^2$;
	%\item To avoid divergence of $\chi^2$-distance, the double exponential term must be accounted for, which implies that $\exp(k^2) \leq \binom{p}{m}^2$, hence $m \geq k^2$.
	%\item If $m\leq k^2$, the following testing procedure can succeed with vanishing error probability. Use the following scan statistics: sum over all $m\times m$ submatrix and take the maximum. Then the signal component is at least $m k$. On the other hand, with high probability, the noise magnitude is $m \sqrt{\log \binom{p}{m}^2} O_{\Prob}(1) = m^{3/2} \sqrt{\log \frac{p}{m}} O_{\Prob}(1)$. Therefore, unless $m\geq k^2$, the signal dominates the noise.
%\end{enumerate}

The construction of this prior is based on the following intuition. The operator norm of a matrix highly depends on the correlation of the rows. Given the $\ell_2$-norm of the rows, the largest spectral norm is achieved when all rows are aligned (rank-one), while the smallest spectral norm is achieved when all rows are orthogonal. In the sparse case, aligned support results in large spectral norm while disjoint support in small spectral norm. However, if all rows are aligned, then the signal is prominent enough to be distinguished from noise. Note that a submatrix structure strikes a precise balance between the extremal cases of completely aligned and disjoint support, which enforces that the row support sets are \emph{contained} in a set of cardinality $m$, which is much larger than the row sparsity $k$ but much smaller than the matrix size $p$. In fact, the optimal choice of the submatrix size given by $m \asymp k^2 \wedge \sqrt{kp}$, which matches the structural property given in \prettyref{thm:approxvec}.
The  structure of the least favorable prior, in a way, shows that the optimality of tests based on approximate singular vector is not a coincidence.
%Interestingly, under this prior the matrix is supported on an $m\times m$ submatrix, where $m$ is given 

Another perspective is that the sparsity constraint on the matrix forces the marginal distribution of each entry in the nonzero pattern $(\indc{M_{ij} \neq 0})$
to be $\Bern(\frac{k}{p})$. However, if all the entries were independent, then it would be very easy to test from noise. 
Indeed, perhaps the most straightforward choice of prior is $M_{ij} \iiddistr t \cdot \Bern(\frac{k}{p})$, where $t \asymp \frac{k}{p}$.
However, the linear test statistic based on $\sum_{ij} M_{ij}$ succeeds unless $\lambda \lesssim 1$. 
We can improve the prior by randomize the eigenvector, i.e., $M_{ij} \iiddistr t u_i u_j \Bern(\frac{k}{p})$, but the $\chi^2$-test in \prettyref{thm:ub.kbig} unless 
$\lambda \lesssim \sqrt{k}$, which still falls short of the desired $\lambda \asymp (kp)^{1/4}$.
Thus, we see that the coupling between the entries is useful to make the mixture distribution closer to the null hypothesis.

	\subsection{Key lemmas}
	\label{sec:keylmm}
	
	The main tool for our lower bound is the $\chi^2$-divergence, defined by
\[
\chi^2(P \, \| \, Q) \triangleq \int \pth{\fracd{P}{Q} - 1}^2 \diff Q,
\]
	which is the variance of the likelihood ratio $\fracd{P}{Q}$ under $Q$. The $\chi^2$-divergence is related to the total variation via the following inequality \cite[p.~1496]{FHT03}:
	\begin{equation}
	\chi^2 \geq \TV \log \frac{1+\TV}{1-\TV}
	\label{eq:chi2tv}
\end{equation}
Therefore the total variation distance cannot goes to one unless the $\chi^2$-divergence diverges. Furthermore, if $\chi^2$-divergence vanishes, then the total variation also  vanishes, which is equivalently to, in view of \prettyref{eq:lecam}, that $P$ cannot be distinguished from $Q$ better than random guessing.

		The following lemma due to Ingster and Suslina (see, \eg, \cite[p.~97]{IS03}) gives a formula for the $\chi^2$-divergence of a normal location mixture with respect to the standard normal distribution.
\begin{lemma}
Let $P$ be an arbitrary distribution on $\reals^m$. Then
\[
\chi^2( N(0,\identity_m) * P \, \| N(0,\identity_m)) = \eexpect{\exp(\Iprod{X}{\tX})}-1
\]
where $*$ denotes convolution and $X$ and $\tX$ are independently drawn from $P$.
	\label{lmm:chi2}
\end{lemma}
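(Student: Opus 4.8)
The plan is to carry out the classical Gaussian computation directly from the definition of the $\chi^2$-divergence. Write $q$ for the density of $N(0,\identity_m)$, i.e., $q(y) = (2\pi)^{-m/2}\exp(-\norm{y}^2/2)$, and let $p_1$ be the density of the mixture $N(0,\identity_m)*P$, which by definition equals $p_1(y) = \Expect_X[q(y-X)]$ for $X\sim P$. Since $\chi^2(N(0,\identity_m)*P \,\|\, N(0,\identity_m)) = \int_{\reals^m} \frac{p_1(y)^2}{q(y)}\diff y - 1$, the first step is to write $p_1(y)^2 = \Expect_{X,\tX}[q(y-X)q(y-\tX)]$ as a double expectation over independent copies $X,\tX\sim P$, and then interchange this expectation with the $y$-integral. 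This reduces the claim to the pointwise identity, valid for every fixed $x,\tx\in\reals^m$,
\[
\int_{\reals^m} \frac{q(y-x)\,q(y-\tx)}{q(y)}\,\diff y = \exp(\iprod{x}{\tx}).
\]

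The second step is to verify this identity by completing the square. Expanding the three quadratic forms in the exponent gives $\norm{y-x}^2 + \norm{y-\tx}^2 - \norm{y}^2 = \norm{y - (x+\tx)}^2 - 2\iprod{x}{\tx}$, so the integrand equals $(2\pi)^{-m/2}\exp\!\big(-\tfrac12\norm{y-(x+\tx)}^2\big)\exp(\iprod{x}{\tx})$. Integrating the shifted standard Gaussian density over $y\in\reals^m$ contributes a factor of $1$, leaving exactly $\exp(\iprod{x}{\tx})$. Plugging this back, $\chi^2 + 1 = \Expect_{X,\tX}[\exp(\iprod{X}{\tX})]$, which is the asserted formula.

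There is essentially no real obstacle here; the one point deserving a word of justification is the exchange of expectation and integral, which is legitimate by Tonelli's theorem since every quantity involved is nonnegative. As an optional sanity check one may note that, for i.i.d.\ $X,\tX$, $\Expect[\iprod{X}{\tX}] = \norm{\Expect X}^2 \geq 0$, so Jensen's inequality applied to $\exp$ gives $\chi^2 \geq 0$, consistent with the formula.
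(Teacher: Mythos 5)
Your proof is correct. The paper does not prove this lemma itself but cites it from Ingster--Suslina, and your derivation --- expanding $p_1(y)^2$ as a double expectation over independent copies, applying Tonelli, and completing the square to get $\int q(y-x)q(y-\tx)/q(y)\,\diff y = \exp(\iprod{x}{\tx})$ --- is exactly the standard argument behind the cited result; the Tonelli step also correctly covers the case where both sides are $+\infty$, so no integrability hypothesis on $P$ is needed.
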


	The proof of the lower bound in \prettyref{thm:main} relies on the following lemmas.
	%, which are instrumental for upper bounding the $\chi^2$-divergence between the null distribution and the mixed alternative. 
	These results give non-asymptotic both necessary and sufficient conditions for certain moment generating functions involving hypergeometric distributions to be bounded, which show up in the $\chi^2$-divergence calculation.
	Let $H \sim \Hyper(p,m,m)$, with $\prob{H =i} = \frac{\binom{m}{i} \binom{p-m}{m-i}}{\binom{p}{m}}, i = \ntok{0}{m}$. 
	
	\begin{lemma}[{\cite[Lemma 1]{CMW13}}]
	\label{lmm:rm}
	Let $p \in \naturals$ and $m \in [p]$. Let $\ntok{B_1}{B_m}$ be independently Rademacher distributed. Denote by
	\[
	G_m \triangleq \sum_{i=1}^m B_i
	\]
the position of a symmetric random walk on $\integers$ starting at 0 after $m$ steps. 
Then there exist an absolute constant $a_0 > 0$ and function $A:(0,a_0) \mapsto \reals_+$ with $A(0+)=0$, such that if $t = \frac{a}{m} \log \frac{\eexp p}{m}$ and $a<a_0$, then
\begin{equation}
\expect{\exp\pth{t G_H^2}} \leq A(a).
	\label{eq:rm}
\end{equation}
	\end{lemma}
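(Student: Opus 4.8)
The plan is to condition on $H$, bound the conditional moment generating function of a squared Rademacher sum, and then average over $H\sim\Hyper(p,m,m)$ using that $H$ is stochastically small. Writing $G_i=\sum_{j=1}^i B_j$, we have $\expect{\exp(tG_H^2)}=\sum_i\prob{H=i}\,\expect{\exp(tG_i^2)}$, and we use two complementary estimates on $\expect{\exp(tG_i^2)}$. Since $G_i$ is a sum of $i$ independent Rademachers it is sub-Gaussian with variance proxy $i$ (i.e. $\expect{\exp(\theta G_i)}\le\exp(i\theta^2/2)$), so the Gaussian identity $\exp(\lambda x^2)=\expects{\exp(\sqrt{2\lambda}\,xZ)}{Z\sim N(0,1)}$ gives
\[
\expect{\exp(\lambda G_i^2)}\le(1-2\lambda i)^{-1/2},\qquad 0\le\lambda i<\tfrac12,
\]
and in particular $\expect{\exp(tG_i^2)}-1\le\frac{2ti}{1-2ti}\le 4ti$ when $ti\le\tfrac14$. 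Separately, the deterministic bound $|G_i|\le i$ yields $\expect{\exp(tG_i^2)}\le\exp(ti^2)$ for every $i$. For the hypergeometric weights we use the union bound $\prob{H\ge i}\le\binom{m}{i}(m/p)^i\le\pth{\frac{em^2}{ip}}^i$ together with $\Expect H=m^2/p$.

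The argument then splits according to the size of $m$ relative to $p$. If $m\ge\eta p$ for a small absolute constant $\eta$, then $tH\le tm=a\log\frac{\eexp p}{m}\le a\log\frac{\eexp}{\eta}<\tfrac14$ once $a<a_0$, so the sub-Gaussian bound applies for every realization of $H$ and $\expect{\exp(tG_H^2)}\le(1-2tm)^{-1/2}\le(1-2a\log\frac{\eexp}{\eta})^{-1/2}=1+O(a)$. If $m<\eta p$, split the sum at $i_0=\lceil 1/(8t)\rceil$. On $i<i_0$ we have $ti<\tfrac18$, so that part contributes at most $1+4t\,\Expect H=1+4tm^2/p$, and since $tm^2/p=a\,\frac{\log(\eexp p/m)}{p/m}\le a$ (the map $x\mapsto\frac{\log(\eexp x)}{x}$ is decreasing on $[1,\infty)$), this is at most $1+4a$. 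On $i\ge i_0$, each term is at most $\exp(i(\log\tfrac{em^2}{ip}+ti))$, and the bracketed function of $i$ is convex (second derivative $1/i^2>0$), hence on $[i_0,m]$ it is at most the larger of its two endpoint values: at $i\asymp i_0$ it equals $\log(8e\,tm^2/p)+\tfrac18\le\log(8ea)+\tfrac18$, and at $i=m$ it equals $1+a-(1-a)\log\frac pm$. The point is that this second sum is nonempty only when $i_0\le m$, i.e. $\log\frac{\eexp p}{m}\ge\frac1{8a}$, and on that event $1+a-(1-a)\log\frac pm\le\frac{17}{8}-\frac1{8a}$; so for $a<a_0$ both endpoint values are at most $-c(a)$ with $c(a)\gtrsim\log\frac1a\to\infty$, the whole bracket on $[i_0,m]$ is $\le-c(a)$, and the second sum is at most $\sum_{i\ge1}e^{-c(a)i}\lesssim e^{-c(a)}\lesssim a$. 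Collecting the pieces gives $\expect{\exp(tG_H^2)}\le 1+A(a)$ with $A(a)=O(a)$, hence $A(0^+)=0$; this is the form consumed by the $\chi^2$-divergence computation based on \prettyref{lmm:chi2}.

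The step I expect to be the main obstacle is the large-deviation regime where $i$ is comparable to $m$: there the tail bound $\prob{H\ge i}\le(em^2/(ip))^i$ degrades --- it even exceeds $1$ once $i\lesssim m^2/p$ --- while $\exp(ti^2)$ is exponentially large, so no single crude estimate suffices. The resolution is exactly the dichotomy above: one peels off the dense regime $m\gtrsim p$, where $t$ is so small that the sub-Gaussian bound alone settles everything, and in the sparse regime one exploits that $i_0\le m$ forces $p/m$ to be at least exponentially large in $1/a$, which pushes the $i=m$ endpoint of the convex exponent far below $0$. One must also keep every estimate non-asymptotic, so that the bound holds for all $p\ge m$ rather than only as $p\to\infty$.
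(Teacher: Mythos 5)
Your proof is correct, and at the strategic level it mirrors the argument the paper relies on (the proof of \cite[Lemma 1]{CMW13}, a version of which appears in this paper's commented-out appendix): truncate $H$ at the scale $\asymp 1/t$, use a Gaussian-type bound on $\expect{\exp(tG_i^2)}$ below the truncation, and above it fall back on the crude bound $G_i^2\le i^2$ paid for by the hypergeometric tail. The tactical ingredients, however, are genuinely different and arguably cleaner. Where the paper's argument invokes Tusn\'ady's coupling $|G_i|\le |Z_i|+2$ to import the Gaussian identity, you derive $\expect{\exp(\lambda G_i^2)}\le (1-2\lambda i)^{-1/2}$ directly from sub-Gaussianity of $G_i$ via the linearization $\exp(\lambda x^2)=\expects{\exp(\sqrt{2\lambda}xZ)}{Z}$, avoiding the coupling citation and the $+2$ bookkeeping; and where the paper uses stochastic domination of $\Hyper(p,m,m)$ by a binomial and sums the resulting series term by term, you use the union bound $\prob{H\ge i}\le(em^2/(ip))^i$ and dispose of the whole range $[i_0,m]$ at once by convexity of the exponent $i\mapsto \log\frac{em^2}{ip}+ti$, evaluated at the two endpoints. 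Your version also delivers the quantitative form the lemma actually asserts, $\expect{\exp(tG_H^2)}\le 1+O(a)$ so that $A(0+)=0$, rather than merely a bound by an absolute constant. Two small points to tighten in a write-up: the endpoint value at $i_0=\ceil{1/(8t)}$ is really $\log\frac{8etm^2}{p}+\tfrac18+t$ rather than $\log(8ea)+\tfrac18$ (the extra $+t$ is harmless when $t\le \tfrac18$, and when $t>\tfrac18$ one has $i_0=1$ and the $f(m)$-type bound takes over, but this corner case should be acknowledged); and the dense-regime split $m\ge \eta p$ is in fact redundant, since for small $a$ the condition $i_0\le m$ already forces $p/m\ge e^{1/(8a)-1}$, so the tail sum is automatically empty whenever $m$ is a constant fraction of $p$.
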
	

\begin{lemma}[{\cite[Lemma 15, Appendix C]{HWX14}}]
	\label{lmm:H}
	Let $p \in \naturals$ and $m \in [p]$. Then there exist an absolute constant $b_0 > 0$ and function $B:(0,b_0) \mapsto \reals_+$ with $B(0+)=0$, such that if $\lambda = b \pth{\frac{1}{m} \log \frac{\eexp p}{m}  \wedge \frac{p^2}{m^4}}$ and $b<b_0$, then	
\begin{equation}
\expect{\exp\pth{\lambda H^2}} \leq B(b).
	\label{eq:H}
\end{equation}
%for some absolute constant $B$. % only depends on $a$.
\end{lemma}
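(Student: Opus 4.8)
The plan is to trade the moment generating function for a single moment. Since $\eexp^x-1\le x\eexp^x$ for $x\ge 0$, taking $x=\lambda H^2$ gives $\expect{\exp(\lambda H^2)}-1\le \lambda\,\expect{H^2\exp(\lambda H^2)}$, so it suffices to show $\lambda\,\expect{H^2\exp(\lambda H^2)}\le B(b)$ with $B(0+)=0$ — which is anyway the form actually consumed in the $\chi^2$-calculations of \prettyref{sec:lb}, where it is the divergence $\expect{\exp(\cdot)}-1$ that must be small. Write $\mu\triangleq\Expect H=m^2/p$. The whole argument rests on two elementary consequences of the definition of $\lambda$: $\lambda\mu^2=b\big(\tfrac{m^3}{p^2}\log\tfrac{\eexp p}{m}\wedge 1\big)\le b$, and $\lambda\mu=b\big(\tfrac{m}{p}\log\tfrac{\eexp p}{m}\wedge\tfrac{p}{m^2}\big)\le b$ (the first term of the latter minimum is $x(1-\log x)$ with $x=m/p\in(0,1]$, hence $\le 1$).

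Expand $\expect{H^2\exp(\lambda H^2)}=\sum_{i=0}^m i^2\eexp^{\lambda i^2}\prob{H=i}$ and split at $i_0\triangleq\lceil K\mu\rceil$ for a large absolute constant $K$ (taking $K\ge \eexp^{10}$, so that if $m$ is a constant fraction of $p$ then $i_0\ge m$ and the tail range below is empty). On the bulk $0\le i\le i_0$ bound $i^2\eexp^{\lambda i^2}$ by its value at $i_0$ and use $\sum_i\prob{H=i}\le 1$: since $\lambda\mu^2\le b$ one has $\lambda i_0^2\lesssim_K b$, and after multiplying by $\lambda$ this contributes $\lesssim_K \lambda\mu^2\,\eexp^{O_K(b)}\le b\,\eexp^{O_K(b)}\to 0$ (the degenerate sub-case $\mu<1/K$, where $i_0=1$, is handled directly: the only bulk term is $\eexp^{\lambda}\prob{H=1}\le \eexp^{\lambda}\mu$, and $\lambda\eexp^\lambda\mu=O(b)$ using $\lambda\mu\le b$ together with the crude bound $\eexp^\lambda\le \eexp^{b}p^{b}$ valid for $\mu<1$). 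For the tail $i>i_0$ I would use the union bound $\prob{H\ge i}\le\binom{m}{i}(m/p)^i\le(\eexp\mu/i)^i$, valid for all $1\le i\le m$ (this is also the Chernoff bound for $\Binom(m,m/p)$, which dominates $H$ in the convex order).

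Thus each tail term is at most $i^2\eexp^{i g(i)}$ with $g(i)\triangleq\lambda i+1+\log(\mu/i)$. Since $g'(i)=\lambda-1/i$, $g$ first decreases and then increases, so its maximum on $(i_0,m]$ is attained at an endpoint. At the left end, $\lambda\mu\le b$ gives $g(i_0)\le Kb+1-\log K\le -1$ once $K$ is a fixed large constant and $b<b_0$. At the right end, $g(m)=\lambda m+1-\log(p/m)$, and this is $\le -1$ whenever $m$ is not a constant fraction of $p$ — here is exactly where the $\wedge$ is used, via $\lambda m\le b\log\tfrac{\eexp p}{m}$ in the first branch and $\lambda m\le b\,p^2/m^3\le b$ in the second. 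Hence $g\le -1$ on $(i_0,m]$, the tail of $\expect{H^2\exp(\lambda H^2)}$ is dominated by the convergent series $\sum_i i^2\eexp^{-i}$, and multiplying by $\lambda$ — re-using the definition of $\lambda$ to absorb the prefactor, which in the diffuse regime is even smaller because $g(m)=-\Omega(\log p)$ there — yields a bound vanishing as $b\to 0$.

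The real obstacle is not any single estimate but making the tail bound uniform over all $(p,m)$: one must simultaneously control the diffuse regime $\mu=m^2/p\ll 1$, where $\lambda$ itself can be as large as $b\log p$ and the decay of $(\eexp\mu/i)^i$ has to beat $\eexp^{\lambda i^2}$ by a $\log p$-sized margin, and the concentrated regime $\mu\asymp p$, where there is no tail at all. Pinning down $K$ and $b_0$ so that in every regime the quadratic $\lambda i^2$ is dominated by the Chernoff exponent $i\log(i/\eexp\mu)$ — again thanks to the $\wedge$ — is the delicate bookkeeping, and is essentially what is carried out in \cite[Appendix C]{HWX14}. (One could instead write $\exp(\lambda H^2)=\eexpect{\exp(\sqrt{2\lambda}\,NH)}$ for $N\sim N(0,1)$ and integrate the hypergeometric MGF; this, however, requires interpolating three separate bounds on $\varphi_H(\theta)=\Expect[\eexp^{\theta H}]$ — the Poisson bound $\eexp^{\mu(\eexp^\theta-1)}$ for small $\theta$, the refined $(2m/p)^m\eexp^{m\theta}$ for $\theta\gtrsim\log(p/m)$, and $\eexp^{m\theta}$ beyond — so the elementary route above is cleaner.)
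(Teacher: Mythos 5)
Your route is genuinely different from the one behind the quoted lemma: the argument in \cite{HWX14} (and the proof suppressed in this paper's source) dominates $H$ stochastically by $\Binom(m,\tfrac{m}{p-m})$, decouples $S^2$ into $ST$ via Cauchy--Schwarz, and then bounds conditional moment generating functions with a truncation on $T$; you instead attack the hypergeometric tail directly with the union/Chernoff bound $\prob{H\ge i}\le(e\mu/i)^i$ and a bulk/tail split at $\lceil K\mu\rceil$. That skeleton is sound and arguably more elementary (no decoupling), and your two identities $\lambda\mu\le b$ and $\lambda\mu^2\le b$ are exactly the right levers. However, one step is wrong as justified: in the degenerate bulk case you bound $\lambda e^{\lambda}\mu\le(\lambda\mu)\cdot e^{\lambda}\le b\cdot e^{b}p^{b}$, and $p^{b}$ is unbounded in $p$ for fixed $b$, so this is not $O(b)$. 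The estimate itself is true, but you must keep the product intact before discarding anything: $\lambda\mu\,e^{\lambda}\le b\,\tfrac{m}{p}\log\tfrac{ep}{m}\cdot\left(\tfrac{ep}{m}\right)^{b}=b\,e^{b}\,x^{1-b}\log\tfrac{e}{x}$ with $x=m/p$, and $\sup_{x\in(0,1]}x^{1-b}\log\tfrac{e}{x}=\tfrac{e^{-b}}{1-b}\le 2$ for $b\le\tfrac12$; the factor $x^{1-b}$ is what tames $(e/x)^{b}$, and bounding $\lambda\mu$ by $b$ first throws it away.

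The second, larger issue is that the final assembly is not actually carried out. Even granting $g\le-1$ on the tail, the conclusion "$\lambda\sum_{i}i^{2}e^{-i}$ vanishes as $b\to0$" is false on its face because $\lambda$ itself can be as large as $\tfrac{b}{m}\log\tfrac{ep}{m}\gg b$ when $m\lesssim\log p$; your parenthetical remark that $g(m)=-\Omega(\log p)$ in the diffuse regime is the right escape hatch, but it has to be turned into an explicit dichotomy (either $\tfrac1m\log\tfrac{ep}{m}\wedge\tfrac{p^{2}}{m^{4}}\le O(1)$, whence $\lambda\lesssim b$ and the constant tail sum suffices, or $m\lesssim\log p$, whence $\mu\le p^{-1/2+o(1)}$, $g\le-\tfrac14\log p$ on the whole tail, and the prefactor $\lambda\lesssim b\log p$ is absorbed by $p^{-\Omega(1)}$). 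As written you explicitly defer this uniformity over $(p,m)$ to \cite[Appendix C]{HWX14}, so the proposal is an outline with one incorrectly justified step rather than a complete proof. One thing you did get right that the statement itself obscures: since $\Expect[e^{\lambda H^{2}}]\ge1$, the bound $\le B(b)$ with $B(0+)=0$ must be read as a bound on $\Expect[e^{\lambda H^{2}}]-1$, which is indeed the quantity consumed in the $\chi^{2}$ computation of \prettyref{sec:lb}.
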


\begin{remark}[Tightness of Lemmas \ref{lmm:rm}--\ref{lmm:H}]
	\label{rmk:tightlmm}
	The purpose of \prettyref{lmm:rm} is to seek the largest $t$, as a function of $p$ and $m$, such that $\expect{\exp\pth{t G_H^2}}$ is upper bounded by a constant non-asymptotically. The condition that $t \asymp \frac{1}{m} \log \frac{\eexp p}{m}$ is in fact both necessary and sufficient. To see the necessity, note that $\prob{G_H = H | H=i} = 2^{-i}$. Therefore 
	\[
	\expect{\exp\pth{t G_H^2}} \geq \expect{\exp(t H^2) 2^{-H}} \geq \exp(t m^2) 2^{-m} \, \prob{H=m} \geq  \exp\pth{tm^2 - m\log \frac{2p}{m}},
	\]
	 which cannot be upper bound bounded by an absolute constant unless $t \lesssim \frac{1}{m} \log \frac{\eexp p}{m}$.
	
Similarly, the condition $\lambda \lesssim \frac{1}{m} \log \frac{\eexp p}{m}  \wedge \frac{p^2}{m^4}$ in 	\prettyref{lmm:H} is also necessary. To see this, note that $\expect{H}=\frac{m^2}{p}$. By Jensen's inequality, we have $\expect{\exp\pth{\lambda H^2}} \geq \exp(\frac{\lambda m^4}{p^2})$. Therefore a necessary condition for \prettyref{eq:rm} is that $\lambda \leq \frac{p^2 \log B}{m^4}$. On the other hand, we have $\expect{\exp\pth{\lambda H^2}} \geq \exp(\lambda m^2 - m\log \frac{p}{m})$, which implies that $\lambda \lesssim \frac{1}{m} \log \frac{\eexp p}{m}$.

%Moreover, the tight bounds on $t$  (resp. $\lambda$) in \prettyref{lmm:rm} (resp. \prettyref{lmm:H}) remains valid if we replace $G_H^2$ (resp. $H^2$) by the product of $G_H$ (resp. $H$) with an indepedent copy of itself. See, for example, the decoupling argument used in the proof of \prettyref{lmm:H}.
%%$G_H \tG_{\tM}$ (resp. $M \tM$) 
\end{remark}

	\subsection{Proof of \prettyref{thm:main}: lower bound}
	\label{sec:pf.lb.main}
	\begin{proof}% [Proof of \prettyref{thm:lb}]
	\emph{Step 1}: 	
%Let $u=(\ntok{u_1}{u_p})', v=(\ntok{v_1}{v_p})'$ be independent random vectors with \iid Rademacher entries.
%For $I \subset [p]$, let $\Lambda_I$ denote the $\{0,1\}$-valued diagonal matrix with support $I$. 
%Let $U_I$ denote the diagonal matrix defined by $(U_I)_{ii}= u_i \indc{i\in I}$. Therefore the diagonal of $U_I$ has support $I$, where the non-zero entries are independent random signs. Similarly, define the diagonal matrix $V_J$ via $(V_J)_{ii}= v_i \indc{i\in J}$.
Fix $t>0$ to be determined later.
Recall the random sparse matrix $M = t U_I B U_I$ defined in \prettyref{eq:spM}, where 
$I$ is chosen uniformly at random from all subsets of $[p]$ of cardinality $k$, 
$u=(\ntok{u_1}{u_p})^\top$ consists of independent Rademacher entries, 
$B$ is a $p\times p$ matrix with \iid~$\Bern(\frac{k}{m})$ entries, and $(u,I,B)$ are independent. 
Equivalently, 
\[
m_{ij} =  \indc{i \in I} \indc{j \in J} u_i u_j b_{ij}.
\]

Next we show that the hypothesis $H_0: X=Z$ versus $H_1: X = M+Z$ cannot be tested with vanishing probability of error, by showing that the $\chi^2$-divergence is bounded. Let $(\tU,\tI,\tB)$ be an independent copy of $(U,I,B)$. Then $\tM=\tU_{\tI} \tB \tU_{\tI}$ is an independent copy of $M$. Put $s = t^2$. By \prettyref{lmm:chi2}, we have
\begin{align}
\chi^2(P_{X|H_0}\,\|\,P_{X|H_1})+1
= & ~ \expect{\exp\pth{\Iprod{M}{\tilde{M}}}} \nonumber \\
= & ~ \expect{\exp\pth{t^2 \Iprod{U_I B U_I}{\tU_{\tI} \tB \tU_{\tI}}}} \nonumber \\
= & ~ \expect{\exp\pth{s \sum_{i\in I\cap \tI} \sum_{j\in I\cap \tI} u_i \tu_i u_j \tu_j b_{ij} \tb_{ij}}} \nonumber \\
\stepa{=} & ~ \expect{\exp\pth{s \sum_{i\in I\cap \tI} \sum_{j\in I\cap \tI} u_i u_j a_{ij}}} \nonumber \\
\stepb{=} & ~ \expect{ \prod_{i \in I\cap \tI} \prod_{j \in J \cap \tJ}  \pth{1+ \frac{k^2}{m^2}(e^{s u_i u_j}-1)}}  \nonumber \\
\stepc{\leq} & ~ \expect{ \exp\sth{\frac{k^2}{m^2} \sum_{i \in I\cap \tI} \sum_{j \in I \cap \tI}  (e^{s u_i u_j}-1)}} \nonumber \\
= & ~ \expect{ \exp\sth{\frac{k^2}{m^2} \sum_{i \in I\cap \tI} \sum_{j \in I \cap \tI}  (u_i u_j \sinh(s) + \cosh(s)-1)}} \nonumber \\
= & ~ \expect{ \exp\sth{\frac{k^2 \sinh(s)}{m^2}  \bpth{\sum_{i \in I\cap \tI} u_i}^2  + \frac{k^2 (\cosh(s)-1)}{m^2}|I \cap \tI|^2  }  }, \label{eq:lb5}
\end{align}
where 
(a) is due to $(\ntok{u_m \tu_m}{u_m \tu_m}) \eqdistr (\ntok{u_1}{u_m})$;
(b) follows from $a_{ij} \triangleq b_{ij} \tb_{ij} \, \iiddistr \, \Bern(\frac{k^2}{m^2})$;
(c) follows from the fact that $\log (1+x) \leq x$ for all $x > -1$;
(d) is because for $b \in \{\pm 1\}$, we have $\exp(s b) = b \sinh(s) + \cosh(s) - 1$.
%\begin{itemize}
	%\item \prettyref{eq:lb1} -- \prettyref{eq:lb2}: $(\ntok{u_m \tu_m}{u_m \tu_m}) \eqdistr (\ntok{u_1}{u_m})$ and $a_{ij} \triangleq b_{ij} \tb_{ij} \, \iiddistr \, \Bern(\frac{k^2}{m^2})$.
	%\item \prettyref{eq:lb3}: $\log (1+x) \leq x$ for all $x > -1$.
	%\item \prettyref{eq:lb4}: For $b \in \{\pm 1\}$, we have $\exp(s b) = b \sinh(s) + \cosh(s) - 1$.
%\end{itemize}
Recall from \prettyref{lmm:rm} that $\{G_m: m\geq0\}$ denotes the symmetric random walk on $\integers$. Since $I,\tI$ are independently and uniformly drawn from all subsets of $[p]$ of cardinality $k$, we have $H \triangleq |I\cap \tI| \sim \Hyper(p,m,m)$. Define
\begin{align}
A(m,s) \triangleq	 & ~ 	\expect{ \exp\sth{\frac{2k^2 \sinh(s)}{m^2}  G_{H}^2}}, 	\label{eq:Ams}\\
B(m,s) \triangleq	 & ~ 	\expect{ \exp\sth{\frac{2k^2 (\cosh(s)-1)}{m^2} H^2}}. 	\label{eq:Bms}
\end{align}
Applying the Cauchy-Schwartz inequality to the right-hand side of \prettyref{eq:lb5}, we obtain
\begin{equation}
\chi^2(P_{X|H_0}\,\|\,P_{X|H_1})+1 \leq \sqrt{A(m,s) B(m,s)}.
	\label{eq:lbmain}
\end{equation}
Therefore upper bounding the $\chi^2$-divergence boils down to controlling the expectations in \prettyref{eq:Ams} and \prettyref{eq:Bms} separately.

%By \prettyref{lmm:rm}, we conclude that 
%\begin{equation}
%	\frac{k^2 \sinh(s)}{m^2} \leq  \frac{c_1}{m} \log \frac{\eexp p}{m}  \quad \Rightarrow \quad B(m,s) \leq C_1
%	\label{eq:Bms-s}
%\end{equation}
%where $c_1,C_1$ are absolute constants dictated by \prettyref{lmm:rm}. By \prettyref{lmm:H}, we conclude that 
%\begin{equation}
%	\frac{2k^2 (\cosh(s)-1)}{m^2} \leq  c_2 \pth{\frac{1}{m} \log \frac{\eexp p}{m}  \wedge \frac{p^2}{m^4}}  \quad \Rightarrow \quad  A(m,s) \leq C_2
%	\label{eq:Bms-s}
%\end{equation}
%where $c_2,C_2$ are absolute constants dictated by \prettyref{lmm:H}.
Applying \prettyref{lmm:rm} and \prettyref{lmm:H} to $A(m,s)$ and $B(m,s)$ respectively, we conclude that 
\begin{align}
\frac{k^2 (\cosh(s)-1)}{m^2} \leq  c \pth{\frac{1}{m} \log \frac{\eexp p}{m}  \wedge \frac{p^2}{m^4}}  & ~ \Rightarrow \quad  A(m,s) \leq C
	\label{eq:Ams-s} \\
	\frac{k^2 \sinh(s)}{m^2} \leq  \frac{c}{m} \log \frac{\eexp p}{m}  & ~ \Rightarrow \quad B(m,s) \leq C	\label{eq:Bms-s}
\end{align}
where $c,C$ are constants so that $C\to0$ as $c\to0$. Therefore the best lower bound we get for $s$ is
\begin{equation}
	s^* = \max_{k \leq m \leq p} \sth{ (\cosh-1)^{-1}\pth{\frac{c m}{k^2} \log \frac{\eexp p}{m} \wedge \frac{c p^2}{m^2 k^2}} \wedge  \sinh^{-1}\pth{ \frac{c m}{k^2} \log \frac{\eexp p}{m}  } } \,,
	\label{eq:bests}
\end{equation}
where the inverses $\sinh^{-1}$ and $(\cosh-1)^{-1}$ are defined with the domain restricted to $\reals_+$. 
%where the inverses are defined when the domain of $\sinh^{-1}$ and $(\cosh-1)^{-1}$ are the inverses of $\sinh$ and $\cosh-1$ restricted on $\reals_+$. 

To simplify the maximization in \prettyref{eq:bests}, we use the following bounds of the hyperbolic functions:
\begin{align}
%\sinh(x) \leq \frac{\exp(x)}{2}, ~ \cosh(x)-1 \leq \exp(x), & \quad x \geq 0\label{eq:sinh}\\
%\sinh(x) \leq 2 x, ~ \cosh(x)-1 \leq x^2, & \quad x \in [0,1]\label{eq:sinh}\\
\sinh^{-1}(y) \geq \log(2 y), ~ (\cosh-1)^{-1}(y) \geq \log y, & \quad y \geq 0. \label{eq:hyper1}
\end{align}
Therefore
\[
	s^* \geq \log \max_{k \leq m \leq p} \pth{ \frac{c m}{k^2} \log \frac{\eexp p}{m} \wedge \frac{c p^2}{m^2 k^2}}.
\]
Choosing $m = \pth{\frac{p^2}{\log p}}^{\frac{1}{3}}$ yields
\begin{equation}
	s^* \gtrsim \log^+ \pth{\frac{p \log p}{k^3}},
	\label{eq:bests-ksmall}
\end{equation}
where $\log^+\triangleq \max\{\log,0\}$.
Note that the above lower bound is vacuous unless $k \leq 	(p \log p)^{\frac{1}{3}}$. To produce a non-trivial lower bound for $k \geq 	(p \log p)^{\frac{1}{3}}$, note that \prettyref{eq:hyper1} can be improved as follows. If the argument $y$ is restricted to the unit interval, then
\begin{align}
\sinh^{-1}(y)  \geq \sinh^{-1}(1) \, y, ~  (\cosh-1)^{-1}(y) \geq \sqrt{y}, & \quad y \in [0,1], \label{eq:hyper0}
%	\log(2y) \leq \sinh^{-1}(y) \leq \log(1+2y),  y > 0\label{eq:sinh}
\end{align}
which follows from the Taylor expansion of $\cosh$ and the convexity of $\sinh$. Applying \prettyref{eq:hyper0} to \prettyref{eq:bests}, 
\[
	s^* = \max_{m: \frac{c m}{k^2} \log \frac{\eexp p}{m} \leq 1} \pth{\sqrt{\frac{c p^2}{m^2 k^2}} \wedge  \frac{c \sinh^{-1}(1) m}{k^2} \log \frac{\eexp p}{m}}.
\]
Choosing $m = \sqrt{\frac{pk}{ 4c^2 \log \frac{\eexp p}{k}}}$ yields $\frac{cm}{k^2} \log \frac{\eexp p}{k} \leq 1$. We then obtain
\begin{equation}
	s^* \gtrsim \sqrt{\frac{p}{k^3} \log \frac{\eexp p}{k}}.
	\label{eq:bests-kbig}
\end{equation}

%The optimal choice of $m=m^*$ and the optimal value $s^*$ turn out to be given by:
%\begin{align}
%m = & ~ \pth{\frac{p^2}{\log p}}^{\frac{1}{3}}, s^* \gtrsim \log \pth{\frac{p \log \frac{\eexp p}{k}}{k^3}}, \quad \text{if } k \leq 	\nonumber \\
%= & ~ 	\\
%\end{align}

\emph{Step 2}: 
%What remains to be resolved are the following issues:
%\begin{enumerate}
	%\item the prior is concentrated, but not entirely supported, on the parameter space $\Theta(p,k)$.
	%\item under the prior, the functional value (spectral norm) of the random matrix $M$ is concentrated on the mean value $k$, but not exactly constant.
%\end{enumerate}
We invoke \prettyref{lmm:sep} to conclude $kt$ as a valid lower bound for $\lambda$ with $t=\sqrt{s^*}$ given in \prettyref{eq:bests-ksmall} and \prettyref{eq:bests-kbig}. To this end, we need to show that with high probability, $M$ is $O(k)$-sparse and $\|M\|_2 = \Omega(k t)$.
Define events
%\begin{align}
%E_1 = & ~ \{M \in \calM(p,2k) \}	\\
%E_2 = & ~ \{\opnorm{M} \geq kt/2 \}
%\end{align}
\[
E_1 = \{M \in \calM(p,2k) \}, \quad E_2 = \{\opnorm{M} \geq kt/2 \}.
\]
It remains to show that both are high-probability events.
Since $I$ is independent of $B$, we shall assume, without loss of generality, that $I=[m]$.
For the event $E_1$, by the union bound and Hoeffding's inequality, we have
\begin{equation}
\prob{\comp{E_1}} = \prob{B_{II} \notin \Theta(m,2k)} 
	\leq m^2 \prob{\sum_{i=1}^m b_{i1} \geq 2k} \leq m^2 \exp(-mk^2) = o(1),
	\label{eq:E1p}
\end{equation}
where $b_{i1} \iiddistr \Bern(\frac{k}{m})$.
For the event $E_2$, again by Hoeffding's inequality,
\begin{align}
\prob{E_2}
= & ~ \prob{\|B_{II}\|_2 \geq k/2} \geq \prob{\|M \mathbf{1}_I\|_2 \geq \frac{k}{2} \|\mathbf{1}_I\|_2}	\nonumber \\
\geq & ~ \prob{\sum_{j=1}^m b_{ij} \geq \frac{k}{2}, \forall i \in [m]}	\geq 1 - m \, \prob{\sum_{j=1}^m b_{1j} < \frac{k}{2}}	\\
\geq & ~ 1 - m \exp(-mk^2/4)	= 1 - o(1) \label{eq:E22}.
\end{align}
%For more finer results on the concentration of measure of $\opnorm{M}$ and $\lambda_{\max}(M)$, see, for example, \cite{KS03,AKTWY11}.
The desired lower bound now follows from \prettyref{lmm:sep}.
%For refined results on the concentration of $\opnorm{M}$, see, for example, \cite{KS03}.

Finally, we note that the lower bound continues to hold up to constant factors even if $M$ is constrained to be symmetric. Indeed, 
we can replace $M$ with the symmetrized version $M'=[\begin{smallmatrix} 0  & M \\ M^\top & 0\end{smallmatrix}]$ and note that the bound on $\chi^2$-divergence remains valid since $\Iprod{M'}{\tM'}=2\Iprod{M}{\tM}$.
\end{proof}

\section{Detecting sparse covariance matrices}
\label{sec:cov}

\subsection{Test procedures and upper bounds}
	\label{sec:cov-ub}

Let $X_1,\ldots,X_n$ be independently sampled from $N(0,\Sigma)$. 
Define the sample covariance matrix as
\begin{equation}
S = \frac{1}{n} \sum_{i=1}^n X_i X_i^\top,
\label{eq:samplecov}
\end{equation}
which is a sufficient statistic for $\Sigma$.

The following result is the counterpart of \prettyref{thm:ub.ksmall} for entrywise thresholding that is optimal in the highly sparse regime:
\begin{theorem}
	\label{thm:ub.ksmall-cov}
	Let $C,C'$ be constants that only depend on $\tau$.
	Let $\epsilon \in (0,1)$. Define $\hat \Sigma = (S_{ij} \indc{|S_{ij}| \geq t})$, 
	where $\tau  = \sqrt{C\log \frac{p}{\epsilon}}$. Assume that
	$n \geq C' \log p$. If
	$\lambda \sqrt{n} > 2 k t$, then the test $
\psi(S) = \indc{\|\hat \Sigma\|_2 \geq \lambda}$
satisfies
\[
\Prob_{\identity}(\psi=1) + \sup_{\Sigma \in \Xi(p,k,\lambda,\tau)}  \Prob_{\Sigma}(\psi=0)   \leq \epsilon
\]
for all $1 \leq k \leq p$. 
%Consequently, 
	%\begin{equation}
	%\lambdastar \leq 2 k \sqrt{2 \log \frac{4p^2}{\epsilon}}.
	%\label{eq:ub.ksmall}
%\end{equation}
%%holds for all $1 \leq k \leq p$ and all $n \in \naturals$.
\end{theorem}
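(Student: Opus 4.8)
The plan is to run the argument of \prettyref{thm:ub.ksmall} almost verbatim, the one new ingredient being a deviation bound for the entries of the sample covariance matrix $S$ of \prettyref{eq:samplecov} in place of the Gaussian tail bound on $\|Z\|_{\ell_\infty}$. Thus I would first introduce the good event
\[
E\triangleq\sth{\|S-\Sigma\|_{\ell_\infty}\le t/2},
\]
where $\|\cdot\|_{\ell_\infty}$ denotes the entrywise maximum and $t$ is the entrywise threshold from the statement (which is of the order $\sqrt{\log(p/\epsilon)/n}$, since under $\Sigma=\identity$ the entries of $S-\identity$ fluctuate at scale $1/\sqrt n$). For $i\ne j$, $S_{ij}-\Sigma_{ij}=\frac{1}{n}\sum_{\ell=1}^n(X_{\ell i}X_{\ell j}-\Sigma_{ij})$ is an average of \iid centered products of jointly Gaussian variables, which are subexponential with scale of order $\sqrt{\Sigma_{ii}\Sigma_{jj}}\le\opnorm{\Sigma}\le\tau$, and the diagonal entries $S_{ii}-\Sigma_{ii}\eqdistr\frac{\Sigma_{ii}}{n}(\chi^2_n-n)$ satisfy a bound of the same type. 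Bernstein's inequality then gives $\prob{|S_{ij}-\Sigma_{ij}|>s}\le 2\exp\pth{-c\,n\pth{s^2/\tau^2\wedge s/\tau}}$ for an absolute constant $c$, and the assumption $n\ge C'\log p$ is precisely what places $s=t/2$ in the subgaussian branch $s/\tau\le1$; a union bound over the at most $p^2$ pairs then yields $\Prob_\Sigma(\comp{E})\le 2p^2\exp(-c\,nt^2/(4\tau^2))\le\epsilon$ once $C$ (and hence $C'$) is a large enough constant depending on $\tau$. I expect this subexponential concentration of the sample covariance, together with the bookkeeping that turns $n\gtrsim\log p$ into the subgaussian regime, to be the only genuine obstacle; everything downstream is deterministic and already appears in \prettyref{thm:ub.ksmall}.

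On $E$ the reasoning is identical to that of \prettyref{thm:ub.ksmall}. Since the threshold exceeds $\|S-\Sigma\|_{\ell_\infty}$, we have $\hat\Sigma_{ij}=0$ whenever $\Sigma_{ij}=0$, so $\hat\Sigma\in\calM(p,k)$ and, crucially, $\hat\Sigma-\Sigma$ is row- and column-$k$-sparse with every entry of magnitude $O(t)$. Hence $\|\hat\Sigma-\Sigma\|_1=\max_i\|\hat\Sigma_{i*}-\Sigma_{i*}\|_{\ell_1}=O(kt)$ and likewise $\|\hat\Sigma-\Sigma\|_\infty=O(kt)$, so by $\|\cdot\|_2^2\le\|\cdot\|_1\|\cdot\|_\infty$ for induced norms (as in \prettyref{eq:highprobbound}) we obtain $\opnorm{\hat\Sigma-\Sigma}=O(kt)$.

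It remains to assemble the two halves, the test rejecting when the operator norm of the centered estimator $\hat\Sigma-\identity$ is large (of order $\lambda$). Under $H_0$, $\Sigma=\identity$, so on $E$ every off-diagonal $S_{ij}$ has modulus $\le t/2<t$ and is thresholded away while $|S_{ii}-1|\le t/2$; thus $\hat\Sigma-\identity$ is a diagonal matrix of spectral norm $\le t/2$ and the test does not reject. Under $H_1$, $\Sigma\in\Xi(p,k,\lambda,\tau)$ with $\opnorm{\Sigma-\identity}\ge\lambda$, so on $E$,
\[
\opnorm{\hat\Sigma-\identity}\ge\opnorm{\Sigma-\identity}-\opnorm{\hat\Sigma-\Sigma}\ge\lambda-O(kt),
\]
which the signal-strength hypothesis $\lambda\sqrt n>2kt$ keeps bounded away from the null value, so the test rejects. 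On $\comp{E}$, whose probability is at most $\epsilon$, the error is bounded trivially by $1$. Summing the Type-I and Type-II contributions over $E$ and $\comp{E}$ yields $\Prob_{\identity}(\psi=1)+\sup_{\Sigma\in\Xi(p,k,\lambda,\tau)}\Prob_\Sigma(\psi=0)\le\epsilon$. The only points requiring care are the universal constants and the precise comparison level of the test, which are pinned down exactly as in \prettyref{thm:ub.ksmall} once $\tau$ and $t$ are fixed; no idea beyond \prettyref{thm:ub.ksmall} and the concentration bound of the first step is required.
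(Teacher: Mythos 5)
Your proposal is correct and takes essentially the same route as the paper: the paper also runs the proof of \prettyref{thm:ub.ksmall} verbatim after replacing the Gaussian tail bound by the entrywise concentration inequality $\prob{|S_{ij}-\Sigma_{ij}|\geq a}\leq c_0\exp(-c_1 n a^2)$ for $|a|\leq c_2$ with constants depending only on $\tau$ (which it cites from Cai--Zhou rather than rederiving via the Bernstein/subexponential argument you give). Your reading of the test statistic as the spectral norm of $\hat\Sigma$ centered at the identity is also the intended one.
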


To extend the test \prettyref{eq:wtest} to covariance model, 
we need a test statistic for $\fnorm{\Sigma-\identity}^2$.
Consider the following U-statistic proposed in \cite{chen2010tests,CM13}:
\begin{equation}
Q(S) = p + \frac{1}{\binom{n}{2}} \sum_{1 \leq i < j \leq n} \iprod{X_i}{X_j}^2 - \iprod{X_i}{X_i} - \iprod{X_j}{X_j},
\label{eq:QS}
\end{equation}
Then $Q(S)$ is a unbiased estimator of $\fnorm{\Sigma-\identity}^2$.
We have the following result for the moderately sparse regime:

\begin{theorem}
	\label{thm:ub.kbig-cov}
Let 
\[
m = C \sqrt{\frac{kp}{\log \frac{\eexp p}{k}}}.
%C \sqrt{\frac{kp}{\log \frac{\eexp p}{k}}}.
\]
where $C$ is the universal constant from \prettyref{thm:approxvec}. Define the following test statistic
\begin{equation}
	T_m(S) = \max\{\opnorm{S_{II}}: I \subset [p], |I|=m\}
	\label{eq:TmX-cov}
\end{equation}
and the test
\begin{align}
%\psi_1(X) = & ~ \indc{\fnorm{X}^2 \geq (1+\epsilon) p} \label{eq:chi2test} \\
%\psi_2(X) = & ~ \indc{T_m(X) \geq \sqrt{m \log \frac{\eexp p}{k}}}.
\psi(S) = \indc{Q(S) \geq s} \vee \indc{T_m(X) \geq t}
 %C_1 \sqrt{m \log \frac{\eexp p}{k}}} 
\label{eq:wtest-cov} 
\end{align}
where
\begin{equation}
s \triangleq 2 \log \frac{1}{\epsilon} + 2 p \sqrt{\log \frac{1}{\epsilon}}, \quad t \triangleq 2\sqrt{m} + 4 \sqrt{m\log \frac{ep}{m}}.
\label{eq:st-cov}
\end{equation}
There exists a universal constant $C_0$ such that the following holds.
%For any $\epsilon \in (0,1/2)$, 
	%\begin{equation}
	%\lambdastar \leq C_0 \frac{1}{\sqrt{n}} \sth{kp \log \frac{1}{\epsilon} \log \pth{\frac{p}{k} \log \frac{1}{\epsilon}  }}^{\frac{1}{4}} 
	%\label{eq:ub.kbig}
%\end{equation}
%holds for all $1 \leq k \leq p$ and all $n \in \naturals$, achieved by the test \prettyref{eq:wtest}.
For any $\epsilon \in (0,1/2)$, if
	\begin{equation}
	\lambda \geq C_0 \sth{kp \log \frac{1}{\epsilon} \log \pth{\frac{p}{k} \log \frac{1}{\epsilon}  }}^{\frac{1}{4}},
	\label{eq:ub.kbig-cov}
\end{equation}
then the test \prettyref{eq:wtest-cov} satisfies
\[
\Prob_0(\psi=1) + \sup_{M \in \Theta(p,k,\lambda)}  \Prob_{M}(\psi=0)   \leq \epsilon
\]
holds for all $1 \leq k \leq p$.
\end{theorem}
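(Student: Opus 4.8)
The plan is to mirror the proof of \prettyref{thm:ub.kbig}, with the matrix-wise $\chi^2$-test replaced by the U-statistic $Q(S)$ of \prettyref{eq:QS} and the Gaussian noise matrix $Z$ replaced by the sampling fluctuation $S-\Sigma$ of the sample covariance; throughout, the thresholds $s,t$ and the bound on $\lambda$ are to be read with the $\sqrt n$-normalization of \prettyref{thm:main-cov}, so that \prettyref{eq:wtest-cov}--\prettyref{eq:ub.kbig-cov} degenerate to \prettyref{eq:wtest}--\prettyref{eq:ub.kbig} when $n=1$ and $\Sigma-\identity=M$. First I would dispatch the null hypothesis $\Sigma=\identity$. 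There $Q(S)$ is a centered U-statistic, and by the variance formula and exponential tail bound for $Q(S)$ from \cite{chen2010tests,CM13} (this is where the standing assumption $n\gtrsim\log p$ enters) one gets $\Prob_{\identity}(Q(S)\geq s)\leq\epsilon$. For the scan part, for each fixed $I$ with $|I|=m$ the block $S_{II}$ is $\frac1n$ times a standard Wishart matrix, so the Davidson--Szarek bound \cite{Davidson01} gives $\opnorm{S_{II}}\leq 1+C\bigl(\sqrt{m/n}+\sqrt{u/n}\bigr)$ with probability $1-e^{-u}$; a union bound over the $\binom{p}{m}\leq(ep/m)^m$ choices of $I$, together with $n\gtrsim\log p$, yields $\Prob_{\identity}(T_m(S)\geq t)=o(1)$.

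Next I would treat the alternative. Fix $\Sigma\in\Xi(p,k,\lambda,\tau)$, so $\Sigma-\identity$ is symmetric and $k$-sparse with $\opnorm{\Sigma-\identity}\geq\lambda$ and $\|\Sigma\|\leq\tau=O(1)$, and split into two cases as before. If $\fnorm{\Sigma-\identity}^2$ exceeds a constant multiple of $s$, then since $Q(S)$ concentrates about its mean $\fnorm{\Sigma-\identity}^2$ — the relevant variance bound from \cite{chen2010tests,CM13} is controlled using $\|\Sigma\|\leq\tau$ to handle terms such as $\Tr(\Sigma^4)$ and $\fnorm{\Sigma-\identity}^2\|\Sigma\|^2$ — the first indicator in \prettyref{eq:wtest-cov} fires with probability $\geq 1-\epsilon$. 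Otherwise $\fnorm{\Sigma-\identity}^2$ is small, hence $\sr(\Sigma-\identity)\leq r\triangleq 4s/\lambda^2$ is small. I would then apply \prettyref{thm:approxvec} to the symmetric matrix $\Sigma-\identity$ to obtain $I,J\subset[p]$ with $|I|\leq Ckr$, $|J|\leq Ckr\log r$ and $\opnorm{(\Sigma-\identity)_{IJ}}\geq\tfrac18\opnorm{\Sigma-\identity}\geq\tfrac\lambda8$, and set $I'=I\cup J$. Since $(\Sigma-\identity)_{IJ}$ is a submatrix of the principal block $(\Sigma-\identity)_{I'I'}$, this gives $\opnorm{(\Sigma-\identity)_{I'I'}}\geq\lambda/8$ with $|I'|\lesssim kr\log r$ — and for $\lambda$ above the stated threshold $|I'|\leq m$. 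This step is precisely what legitimizes scanning over \emph{principal} blocks in \prettyref{eq:TmX-cov}, at the cost of only a constant factor in $m$.

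It remains to pass from $\Sigma$ to $S$ on the block $I'$. Because $I'$ depends on $\Sigma$ alone and not on the data, no union bound is needed here: a single Wishart deviation bound gives $\opnorm{S_{I'I'}-\Sigma_{I'I'}}\leq C\tau\bigl(\sqrt{m/n}+\sqrt{\tfrac1n\log\tfrac1\epsilon}\bigr)$ with probability $\geq 1-\epsilon$, whence
\[
T_m(S)\ \geq\ \opnorm{S_{I'I'}}\ \geq\ \opnorm{(\Sigma-\identity)_{I'I'}}-\opnorm{\identity_{I'I'}}-\opnorm{S_{I'I'}-\Sigma_{I'I'}}\ \geq\ \frac\lambda8-1-C\tau\Bigl(\sqrt{\tfrac mn}+\sqrt{\tfrac1n\log\tfrac1\epsilon}\Bigr).
\]
Requiring the right-hand side to exceed $t$, then substituting $m\asymp kr\log r$, $r\asymp s/\lambda^2$, $s\asymp p\log\frac1\epsilon$ and using, as in the mean case, that $s\mapsto s\log\frac{es}{k}$ is increasing, reduces to \prettyref{eq:ub.kbig-cov} (in the $\sqrt n$-normalized form $\lambda\sqrt n\gtrsim\{kp\log\tfrac1\epsilon\log(\tfrac pk\log\tfrac1\epsilon)\}^{1/4}$). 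Collecting the finitely many $O(\epsilon)$ exceptional events by the union bound completes the argument. The main obstacle is establishing an exponential-tail concentration inequality for the U-statistic $Q(S)$ that holds \emph{uniformly} over the composite alternative $\Xi(p,k,\lambda,\tau)$ with only $n\gtrsim\log p$; for this one leans on the sharp variance formula of \cite{chen2010tests,CM13} together with the boundedness $\|\Sigma\|\leq\tau$, exactly as in the no-sparsity test of \cite{CM13}. A secondary, purely structural point is the symmetric instantiation of \prettyref{thm:approxvec} above, which is what converts the rectangular scan of the mean model into the principal-block scan of \prettyref{eq:TmX-cov}.
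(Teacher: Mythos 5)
Your proposal follows essentially the same route as the paper, which itself only sketches this proof by noting that it parallels \prettyref{thm:ub.kbig} with the $\chi^2$-statistic replaced by the U-statistic $Q(S)$ and Gaussian deviation bounds replaced by Wishart ones (via $\opnorm{S-\Sigma}\leq\opnorm{\Sigma}\Opnorm{\tS-I}$ and the deviation inequality of \cite[Proposition 4]{CMW12}). Your observation that the rectangular block $(\Sigma-\identity)_{IJ}$ from \prettyref{thm:approxvec} must be embedded in the principal block indexed by $I\cup J$ (costing only a constant factor in $m$) is a correct and worthwhile filling-in of a detail the paper leaves implicit.
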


The proofs of Theorems \ref{thm:ub.ksmall-cov} and \ref{thm:ub.kbig-cov}  parallel those of Theorems \ref{thm:ub.ksmall} and \ref{thm:ub.kbig}. Next we point out the main distinction.
For \prettyref{thm:ub.ksmall-cov}, the only difference is the Gaussian tail is replaced by the concentration inequality 
$\prob{|S_{ij}-\Sigma_{ij}| \geq a} \leq c_0 \exp(-c_1 nt^2)$ for all $|t| \leq c_2$, where $c_i$'s are constants depending only on $\tau$ \cite[Eq.~(26)]{CZ12}.
For \prettyref{thm:ub.kbig-cov}, 
let $\tS \triangleq\Sigma^{-\frac{1}{2}} S \Sigma^{-\frac{1}{2}}$, which is a $k\times k$ standard Wishart matrix with $n$ degrees of freedom. Applying the deviation inequality in \cite[Proposition 4]{CMW12}, we have $\Expect[\Opnorm{\tS-I_k}^2] \lesssim \frac{k}{n}+\frac{k^2}{n^2}$. Since $\opnorm{S-\Sigma} \leq \opnorm{\Sigma} \Opnorm{\tS-I_k}$, we have $\expect{\opnorm{S-\Sigma}^2} \lesssim \lambda^2 \pth{\frac{k}{n}+\frac{k^2}{n^2}}$. 
%By assumption, $\lambda \leq \tau$ is a constant and 

\subsection{Proof of the lower bound}
	\label{sec:cov-lb}
	In this subsection we prove the lower bound part of \prettyref{thm:main-cov}.	
		We begin by stating a counterpart of \prettyref{lmm:chi2} for covariance model, which also gives an inequality relating the $\chi^2$-divergences of the mean model and the covariance model:
\begin{lemma}
Let $\lambda \geq 0$. Let $Q$ be the distribution of a $p \times p$ symmetric random matrix $T$ such that $\opnorm{T} \leq \lambda$ almost surely. 
%Let $\overline{P}_Q$ denote the scale mixture $\int N(0, \Sigma) Q(\diff \Sigma)$. Then
Let $\pi_Q^n \triangleq \int N(0, \lambda \identity_p + T)^{\otimes n} Q(\diff T)$ denote the $n$-sample scale mixture. Then
\begin{align}
%\chi^2( \pi_Q^n^n \, \| N(0, \lambda \identity_p)^{\otimes n})
%= & ~ \expect{\det(\identity_p - \lambda^{-2} (\Sigma_1-\lambda \identity_p)(\Sigma_2-\lambda \identity_p))^{-\frac{n}{2}}}-1 \\
%\geq & ~ \expect{\exp(\identity_p - \lambda^{-2} (\Sigma_1-\lambda \identity_p)(\Sigma_2-\lambda \identity_p))^{-\frac{n}{2}}}-1	\\
\chi^2( \pi_Q^n \, \| N(0, \lambda \identity_p)^{\otimes n})
= & ~ \expect{\det(\identity_p - \lambda^{-2} T \tilde{T})^{-\frac{n}{2}}}-1 \\
\geq & ~ \expect{\exp\pth{\frac{n}{2\lambda^{2}}\Iprod{T}{\tilde{T}} }}-1.	\label{eq:chi2.lb}
%= & ~ \chi^2( Q * N(0, 2\lambda^{2}n^{-1} \identity_p) \, \| N(0, 2\lambda^{2}n^{-1}  \identity_p)) - 1. \label{eq:chi2.lb}
\end{align}
where $T$ and $\tilde{T}$ are independently drawn from $Q$.

Furthermore, if $\opnorm{T} \leq \delta \lambda$ almost surely for some $\delta \in (0,1)$, then
\begin{align}
%\chi^2( \pi_Q^n^n \, \| N(0, \lambda \identity_p)^{\otimes n})
%= & ~ \expect{\det(\identity_p - \lambda^{-2} (\Sigma_1-\lambda \identity_p)(\Sigma_2-\lambda \identity_p))^{-\frac{n}{2}}}-1 \\
%\geq & ~ \expect{\exp(\identity_p - \lambda^{-2} (\Sigma_1-\lambda \identity_p)(\Sigma_2-\lambda \identity_p))^{-\frac{n}{2}}}-1	\\
\chi^2( \pi_Q^n \, \| N(0, \lambda \identity_p)^{\otimes n}) 
%\leq & ~ \expect{\exp\pth{\frac{n}{2\lambda^{2}} \frac{1}{\delta^2}\log \frac{1}{1-\delta^2} \iprod{T}{\tilde{T}} }}-1. \label{eq:chi2.ub}
%\leq & ~ \expect{\exp\pth{\frac{n}{2\lambda^{2}}\Iprod{T}{\tilde{T}} +  \frac{n}{2 (1-\delta^2) \lambda^{4}} \fnorm{T \tilde{T}}^2}}-1 \label{eq:chi2.ub1}\\
\leq & ~ \expect{\exp\pth{\frac{n}{\lambda^{2}}\Iprod{T}{\tilde{T}}}}^{\frac{1}{2}} \expect{\exp\pth{\frac{n}{(1-\delta^2) \lambda^{4}} \fnorm{T \tilde{T}}^2}}^{\frac{1}{2}}-1.
\label{eq:chi2.ub}
%= & ~ \chi^2( Q * N(0, 2\lambda^{2}n^{-1} \identity_p) \, \| N(0, 2\lambda^{2}n^{-1}  \identity_p)) - 1. 
\end{align}
%where $\Sigma_1$ and $\Sigma_2$ are independently drawn from $Q$.
	\label{lmm:chi2.cov}
\end{lemma}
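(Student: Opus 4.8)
The plan is to reduce the lemma to the Gaussian ``overlap'' computation — the covariance analogue of \prettyref{lmm:chi2} — followed by the control of a single logarithmic matrix series. Write $A=\lambda\identity_p+T$ and $B=\lambda\identity_p+\tilde T$; from $\opnorm{T},\opnorm{\tilde T}\le\lambda$ one gets $0\preceq A,B\preceq 2\lambda\identity_p$, hence $A^{-1},B^{-1}\succeq\tfrac1{2\lambda}\identity_p$ and $\mathcal M\triangleq A^{-1}+B^{-1}-\lambda^{-1}\identity_p\succeq 0$ (and $\succ 0$ when $\opnorm{T}<\lambda$; if $\opnorm{T}=\lambda$ is permitted, the identities below are read as equalities of extended reals, both sides being $+\infty$ on the kernel of $\identity_p-\lambda^{-2}T\tilde T$). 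Let $p_\Sigma$ denote the $N(0,\Sigma)$ density. Expanding the scale mixture exactly as in \prettyref{lmm:chi2}, with $T,\tilde T$ i.i.d.\ from $Q$,
\[
\chi^2(\pi_Q^n\,\|\,N(0,\lambda\identity_p)^{\otimes n})+1=\Expect_{T,\tilde T}\bqth{\Big(\int p_A\,p_B\,/\,p_{\lambda\identity_p}\Big)^{n}},
\]
and the single-sample overlap is the elementary Gaussian integral $\int p_Ap_B/p_{\lambda\identity_p}=\lambda^{p/2}(\det A)^{-1/2}(\det B)^{-1/2}(\det\mathcal M)^{-1/2}$. The one algebraic identity to verify is $A\,\mathcal M\,B=A+B-\lambda^{-1}AB=\lambda\identity_p-\lambda^{-1}T\tilde T$; taking determinants gives $\det\mathcal M=\lambda^{p}\det(\identity_p-\lambda^{-2}T\tilde T)/(\det A\det B)$, and substituting back collapses the overlap to $\det(\identity_p-\lambda^{-2}T\tilde T)^{-1/2}$. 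Raising to the $n$-th power yields the first displayed equality.

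For the lower bound \prettyref{eq:chi2.lb}, set $N\triangleq\lambda^{-2}T\tilde T$, so $\Tr N=\lambda^{-2}\Iprod{T}{\tilde T}$ and the spectral radius of $N$ is at most $\opnorm{T}\opnorm{\tilde T}/\lambda^2\le1$; by the first identity, $\chi^2+1=\Expect\,\det(\identity_p-N)^{-n/2}=\Expect\exp\!\big(\tfrac n2\sum_{j\ge1}\tfrac1j\Tr(N^j)\big)$. The goal is to discard the $j\ge2$ part and retain the $j=1$ term, i.e.\ to pass from this to $\Expect\,\det(\identity_p-N)^{-n/2}\ge\Expect\exp\!\big(\tfrac{n}{2\lambda^2}\Iprod{T}{\tilde T}\big)$. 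I expect this to be the main obstacle: because $N$ is \emph{not} symmetric (its eigenvalues can be complex), $\sum_{j\ge2}\tfrac1j\Tr(N^j)$ need not be nonnegative for a fixed pair $(T,\tilde T)$, so the inequality cannot be established pointwise and must use that $T,\tilde T$ are i.i.d. The plan is to Taylor-expand the outer exponential, observe that every resulting monomial is a positive multiple of $\Expect_{T,\tilde T}\prod_\ell\Tr\!\big((T\tilde T)^{j_\ell}\big)$, and argue each such expectation is $\ge0$; writing $\Tr((T\tilde T)^{j})=\Tr\!\big(T^{\otimes j}\tilde T^{\otimes j}C_j\big)$ for the cyclic-shift operator $C_j$ on $(\reals^p)^{\otimes j}$ and using independence turns this into a quantity of the form $\big\langle\Expect[T^{\otimes J}],\,\Expect[T^{\otimes J}]\,C\big\rangle$ ($J=\sum_\ell j_\ell$, $C$ a permutation operator), whose sign is the point requiring care; alternatively one may bound $\chi^2+1=\Expect_Z[L(Z)^2]$, with $L$ the likelihood ratio against $N(0,\lambda\identity_p)^{\otimes n}$, below by the Gaussian-mean $\chi^2$ with prior the law of $\sqrt{n/(2\lambda^2)}\,T$ via a data-processing argument through the centered sample covariance.

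For the upper bound \prettyref{eq:chi2.ub}, the extra hypothesis $\opnorm{T}\le\delta\lambda$ a.s.\ gives $\opnorm{N}\le\delta^2<1$, and $\fnorm{N}^2=\Tr(N^\top N)=\lambda^{-4}\Tr(T^2\tilde T^2)=\lambda^{-4}\fnorm{T\tilde T}^2$. For $j\ge2$ one has $|\Tr(N^j)|\le\opnorm{N}^{\,j-2}\fnorm{N}^2$, whence $\sum_{j\ge2}\tfrac1j|\Tr(N^j)|\le\tfrac12\fnorm{N}^2\sum_{m\ge0}\opnorm{N}^{m}\le\tfrac{\fnorm{N}^2}{2(1-\delta^2)}=\tfrac{\fnorm{T\tilde T}^2}{2(1-\delta^2)\lambda^4}$, so $\det(\identity_p-N)^{-n/2}\le\exp\!\big(\tfrac{n}{2\lambda^2}\Iprod{T}{\tilde T}+\tfrac{n}{4(1-\delta^2)\lambda^4}\fnorm{T\tilde T}^2\big)$. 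Taking expectations and applying Cauchy--Schwarz (which doubles both exponents) yields \prettyref{eq:chi2.ub}, with room to spare. Every step here, and in the first part, is standard Gaussian/determinant bookkeeping; the only genuinely delicate point in the whole proof is the sign issue in the lower bound flagged above.
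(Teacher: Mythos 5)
Your proof of the first equality is correct and is essentially the paper's computation in different packaging: the paper evaluates the single-sample overlap $\int g_1g_2/g_0$ via the identity $\det\pth{\Sigma_0(\Sigma_1+\Sigma_2)-\Sigma_1\Sigma_2}$, which is exactly your relation $A\mathcal{M}B=\lambda\identity_p-\lambda^{-1}T\tilde T$ after clearing the inverses. Your proof of the upper bound \prettyref{eq:chi2.ub} is also correct and is in fact \emph{more} careful than the paper's: the paper applies the scalar bound $\log(1-\mu)\ge-\mu-\mu^2/(1-\delta^2)$ eigenvalue-by-eigenvalue to $\lambda^{-2}T\tilde T$, whereas your expansion $-\log\det(\identity_p-N)=\sum_{j\ge1}\Tr(N^j)/j$ combined with $|\Tr(N^j)|\le\opnorm{N}^{j-2}\fnorm{N}^2$ remains valid even though $T\tilde T$ is non-normal and may have complex eigenvalues.

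The genuine gap is the lower bound \prettyref{eq:chi2.lb}, which you leave open after (correctly) diagnosing the difficulty. The paper dispatches this step by invoking $\log\det(\identity+A)\le\Tr(A)$ pointwise with $A=-\lambda^{-2}T\tilde T$; your suspicion that this cannot be done pointwise is well founded. For instance, with $\lambda=1$, $T=\diag(1,-1)$ and $\tilde T$ the $2\times2$ swap matrix, $T\tilde T$ is a rotation with $\Tr(T\tilde T)=0$ and $\det(\identity-T\tilde T)=2$, so $\det(\identity-T\tilde T)^{-n/2}<1=\exp\pth{\tfrac n2\Tr(T\tilde T)}$ for that realization. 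Hence the inequality, if true, must be established in expectation over the \iid pair $(T,\tilde T)$, and neither of your two sketches closes this: the nonnegativity of $\Expect\prod_\ell\Tr\pth{(T\tilde T)^{j_\ell}}$ is precisely the unresolved sign question (the permutation operator $C$ you introduce is not positive semidefinite in general), and the data-processing route gives an inequality in the wrong direction unless you exhibit a channel carrying the Wishart-type statistic to an exactly Gaussian one, which you do not. Note that \prettyref{eq:chi2.lb} is stated for conceptual purposes and is not used downstream in the paper (only \prettyref{eq:chi2.ub} enters the lower-bound proof of Theorem~\ref{thm:main-cov}), so the omission does not propagate; but as a proof of the lemma as stated, your argument for \prettyref{eq:chi2.lb} is incomplete, and you should be aware that the paper's own one-line justification of that step suffers from exactly the defect you identified.
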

\begin{proof}
	Let $g_{i}$ be the density function of $N\left(0,\Sigma _{i}\right) $ for $i=0,1$ and $2$, respectively. Then 
\begin{align}
\int \frac{g_{1}g_{2}}{g_{0}}
= & ~ \det(\Sigma_0)^{-1} \left[ \det \left( \Sigma_{0}(\Sigma _{1}+\Sigma _{2}) - \Sigma_1\Sigma_2\right)\right] ^{-\frac{1}{2}} \label{eq:g012}
%= & ~ \left[ \det \left( I-\Sigma_{0}^{-2}(\Sigma _{1}-\Sigma _{0}) ( \Sigma _{2}-\Sigma_{0}) \right) \right] ^{-\frac{1}{2}},
\end{align}
provided that 
\begin{equation}
\Sigma_{0}(\Sigma _{1}+\Sigma _{2}) \succeq \Sigma_1\Sigma_2
\label{eq:g012cond}
\end{equation}
 otherwise, the integral on the right-hand side of \prettyref{eq:g012} is infinite.
%Put $S_i = \Sigma_i - \lambda \identity_p$. 
Conditioning on two independent copies $T,\tilde{T}$ and applying \prettyref{eq:g012} with $\Sigma_0=\lambda \identity_p$, $\Sigma_1=\lambda \identity_p + T$ and $\Sigma_2=\lambda \identity_p + \tilde{T}$,  which satisfies \prettyref{eq:g012cond}, we obtain
\begin{align*}
\chi^2( \pi_Q^n \, \| N(0, \lambda \identity_p)^{\otimes n}) + 1
= & ~ \Expect_{T,\tilde{T}}\qth{ \pth{\int \frac{g_{1}g_{2}}{g_{0}}}^n} \\
= & ~ \expect{\det(\identity_p - \lambda^{-2} T \tilde{T})^{-\frac{n}{2}}} \\
= & ~ \expect{\exp\pth{-\frac{n}{2} \log \det(\identity_p - \lambda^{-2} T \tilde{T})}} \\
\stepa{\geq} & ~ \expect{\exp\pth{\frac{n}{2} \lambda^{-2} \Tr(T \tilde{T})}} \\
= & ~ \expect{\exp\pth{\frac{n}{2\lambda^{2}}  \iprod{T}{\tilde{T}}}} \\
= & ~ \chi^2( Q * N(0, 2\lambda^{2}n^{-1} \identity_p) \, \| N(0, 2\lambda^{2}n^{-1}  \identity_p)) + 1
\end{align*}
where (a) is due to $\log \det(I+A) \leq \Tr(A)$.
%we have used $\log \det(I+A) \leq \Tr(A)$ which follows from $\log(1-\lambda) \leq -\lambda$ for $\lambda < 1$.
%Similarly, \prettyref{eq:chi2.ub} follows in view of the fact that $\log(1-\lambda) \geq -\lambda \frac{1}{\delta^2}\log \frac{1}{1-\delta^2}$ for $\lambda \leq \delta^2$ and $\|T\tilde{T}\|_2 \leq \|T\|_2\|\tilde{T}\|_2 \leq \lambda^2\delta^2$.

If $\lambda_{\max}(T) \leq \delta \lambda < \lambda$, then $\lambda_{\max}(\lambda^{-2} T \tilde{T}) < \delta^2$. Using 
\[
\log(1-\lambda) \geq -\frac{\lambda}{1-\lambda} = -\lambda - \frac{\lambda^2}{1-\lambda} \geq -\lambda - \frac{\lambda^2}{1-\delta^2},
\]
% we have $\frac{\log(1-\delta^2)}{\delta^2} \lambda \geq \log(1-\delta^2) \lambda$ 
for all $\lambda \leq \delta^2 < 1$, we have
\[
\log \det(I - \lambda^{-2} T \tilde{T}) \geq -\lambda^{-2} \Tr(T \tilde{T}) - \frac{1}{(1-\delta^2) \lambda^{4}} \fnorm{T \tilde{T}}^2,
\]
which gives 
\[
\chi^2( \pi_Q^n \, \| N(0, \lambda \identity_p)^{\otimes n}) 
\leq \expect{\exp\pth{\frac{n}{2\lambda^{2}}\Iprod{T}{\tilde{T}} +  \frac{n}{2 (1-\delta^2) \lambda^{4}} \fnorm{T \tilde{T}}^2}}-1,
\]
 and, upon applying Cauchy-Schwarz, \prettyref{eq:chi2.ub}.
\end{proof}

Next we apply \prettyref{lmm:chi2.cov} to obtain minimax lower bound for testing sparse covariance matrices as defined in \prettyref{eq:htsparse-cov}.
Throughout the remainder of this subsection, $c,c',c_0,\cdots$ denote absolute constants whose value might vary at each occurrence.
Recall that $M$ is the $p\times p$ random matrix defined in \prettyref{eq:spM}, with
%\[
%t = \sqrt{\frac{s}{n}}, \quad m = 
%\begin{cases}
%k^2  &  k\leq p^{1/3}\\
%\sqrt{kp} & k\geq p^{1/3}\\
%\end{cases}, \quad 
%s= 
%\frac{m}{k^2}=
%\begin{cases}
%\log p  &  k\leq p^{1/3}\\
%\sqrt{p/k^3} & k\geq p^{1/3}\\
%\end{cases}
%\quad .
%\] 
$t = \sqrt{\frac{s}{n}}$,
 %\quad s= 
%\frac{m}{k^2}, \quad 
%m = k^2 \wedge \sqrt{kp}=
%\begin{cases}
%k^2  &  k\leq p^{1/3}\\
%\sqrt{kp} & k\geq p^{1/3}\\
%\end{cases}.
and $s>0$ and $k \leq m \leq p$ are to be specified later.

Define the event
\begin{align}
E_3 = & ~  \sth{\opnorm{M} \leq \frac{1}{2} } \label{eq:E3} \\
E_4 = & ~  \sth{\opnorm{B_{II} - p \allones_{II}} \leq \sqrt{c_0 \pth{k \vee \frac{\log m}{\log \frac{e \log m}{k}}}} } \label{eq:E4} 
\end{align}
and set $E^{\star} = E_3 \cap E_4$.
Next we show that 
\begin{equation}
\prob{E^\star} \geq 1 - o(1).
\label{eq:PE3}
\end{equation}
For $E_3$,
	%To show that \prettyref{eq:E3} is a high-probability event, 
	note that $\norm{M}_2 = t \norm{B_{II}}$. Since $I$ is independent of $B$, 
	we shall assume that $I=[m]$ and let $B'=B_{II}$. Since 
	$\|B_{II}\|_2 \leq \|B_{II}\|_1 \|B_{II}\|_\infty$, 
	similar to \prettyref{eq:E22}, Hoeffding inequality implies that $\prob{\|B_{II}\|_1 \geq 2k} = \prob{\|B_{II}\|_\infty \geq 2k} \leq m^2\exp(-mk^2/4)$. Therefore
	$\|M\|_2 \leq 2tk $ with probability at least $1-2m^2\exp(-mk^2/4)$.
For $E_4$, it follows\footnote{The result in \cite[Theorem 5]{HWX14b} deals with symmetric matrices. Here, since $B_{II}$ is an $m\times m$ matrix consisting of iid $\Bern(k/m)$ entries, the result follows from combining \cite[(15)]{HWX14b} and Talagrand's concentration inequality at the end of the proof therein.} from \cite[Theorem 5]{HWX14b} that 
\[
\opnorm{B_{II} - \Expect[B_{II}]} \leq \sqrt{c_0 \pth{k \vee \frac{\log m}{\log \frac{e \log m}{k}}}},
\]
with probability at least $1-\exp(-c_1(k \vee \frac{\log m}{\log \frac{e \log m}{k}}))$. Thus $\prob{E_4} = 1-o(1)$.

Consider $\Sigma=\identity_{2p} + T$, where 
\[
T = \begin{bmatrix} 0  & M \\ M^\top & 0\end{bmatrix}.
\]
Note that $TT^\top=\begin{bmatrix} MM^\top   & 0 \\ 0& M^\top M\end{bmatrix}$ and hence
$\|T\|_2 = \norm{M}_2$.
Let $Q$ denote the law of $T$ conditioned on the event $E^{\star}$, and 
By \prettyref{eq:E1p} and \prettyref{eq:E22}, we have
$\Sigma \in \Xi(2p,2k,kt/2,3/2)$ with probability tending to one. 
Thus it remains to bound the $\chi^2$-divergence.

Let $\pi_Q^n$ denote the mixture of $N(0,\identity_{2p}+T)^{\otimes n}$ induced by the prior $Q$. 
Let $\tM$, $\tS$ and $\tE^{\star}$ are independent copies of $M$, $T$, and $E^{\star}$, respectively.
%, and $\tE^{\star}=\{\|\tM\|_2 \leq \frac{1}{2}\}$.
Applying \prettyref{lmm:chi2.cov} with $\lambda=1$ and $\delta=\frac{1}{2}$, we have
\begin{align*}
& ~  \chi^2( \pi_Q^n \, \| N(0, \identity_{2p})^{\otimes n}) \\
\leq & ~  \expect{\exp\pth{n\Iprod{T}{\tilde{T}}} \Big| E^{\star},\tilde E^{\star}  }^{\frac{1}{2}} \expect{\exp\pth{2n\fnorm{T \tilde{T}}^2}  \Big| E^{\star},\tilde E^{\star}  }^{\frac{1}{2}}-1\\
= & ~  \expect{\exp\pth{2n\Iprod{M}{\tilde{M}}} \Big| E^{\star},\tilde E^{\star}  }^{\frac{1}{2}} \expect{\exp\pth{2n (\fnorm{M \tilde{M}^\top}^2 + \fnorm{M^\top \tilde{M}}^2)}  \Big| E^{\star},\tilde E^{\star}  }^{\frac{1}{2}}-1\\
\stepa{\leq} 
& ~  \expect{\exp\pth{2n\Iprod{M}{\tilde{M}}} \Big| E^{\star},\tilde E^{\star}  }^{\frac{1}{2}} 
\expect{\exp\pth{4n \fnorm{M \tilde{M}}^2 }  \Big| E^{\star},\tilde E^{\star}  }^{\frac{1}{2}}
%\expect{\exp\pth{4n \fnorm{M^\top \tilde{M}}^2}  \Big| E^{\star},\tilde E^{\star}  }^{\frac{1}{4}}
-1\\
\leq
& ~ \frac{1}{\prob{E^{\star}}^2} \expect{\exp\pth{2n\Iprod{M}{\tilde{M}}}  }^{\frac{1}{2}} \expect{\exp\pth{4n \fnorm{M \tilde{M}}^2 }  }^{\frac{1}{2}} -1
\end{align*}
where (a) follows from the Cauchy-Schwarz inequality and the fact that $M$ and $M^\top$ have the same distribution.

In \prettyref{sec:pf.lb.main}
we have already shown that $\expect{\exp\pth{2n\Iprod{M}{\tilde{M}}}  }$ is bounded by a constant, provided that 
\prettyref{eq:Ams-s} and \prettyref{eq:Bms-s} holds.
 To complete the proof of the lower bound, it remains to show  that
\begin{equation}
\expect{\exp\pth{4 n \fnorm{M \tilde{M}}^2}}
\label{eq:covgoal}
\end{equation}
is bounded under the condition of 
\prettyref{eq:nassumption-basic} and \prettyref{eq:nassumption-extra}.
To this end, recall from \prettyref{eq:spM} that $M = t H$ and $\tM = t \tH$, where
$H =U_I B U_I$ and $\tH=\tU_\tI \tB \tU_\tI$. 
Note that $\Expect[B]=\epsilon \allones$, where $\epsilon = \frac{k}{m}$.
Write
\[
H = U_I B U_I = U_I (B-\epsilon \allones) U_I  + \epsilon U_I \allones U_I  
= U_I (B-\epsilon \allones) U_I  + \epsilon v v^\top
\]
where $v=U_I \ones$ is supported on $I$ with $m$ independent Rademacher non-zeros.
Then
\[
H\tH
= 
\underbrace{\epsilon^2 v v^\top  \tv \tv^\top}_{h_1} +
\underbrace{\epsilon U_I (B-\epsilon \allones) U_I \tv \tv^\top}_{h_2}
+ \underbrace{\epsilon v v^\top \tU_\tI (\tB-\epsilon \allones) \tU_\tI }_{h_3}
+ \underbrace{U_I (B-\epsilon \allones) U_I  \tU_\tI (\tB-\epsilon \allones) \tU_\tI  }_{h_4}
\]
where the first three terms are rank-one matrices.
Since
$\fnorm{H\tH}^2 \leq 4 \sum_{i=1}^4 \fnorm{h_1}^2$, H\"older's inequality implies 
\[
\expect{\exp\pth{4 n \fnorm{M \tilde{M}}^2}}
\leq 
\prod_{i=1}^4 \qth{\expect{\exp\pth{64 n t^4 \fnorm{h_i}^2}}}^{1/4}
\]
%To show \prettyref{eq:covgoal} is  bounded, it suffices to show that
%
%\paragraph{$h_1$}
%
%\paragraph{$h_2$ and $h_3$}
%
%\paragraph{$h_4$}
	We proceed to bound the four terms separately. First note that $h_1 = \epsilon^2 \Iprod{v}{\tv} v \tv^\top$ and hence
	$\fnorm{h_1}^2 = \epsilon^4 \Iprod{v}{\tv}^2 \|v\|_2^2\|\tv\|_2^2 = \epsilon^4 m^2 \Iprod{v}{\tv}^2 $.
Note that 
$\Iprod{v}{\tv} \eqdistr G_H$, where $G_H$, defined in \prettyref{lmm:rm}, is the sum of $\Hyper(p,m,m)$ number of independent Rademacher random variables.
In view of \prettyref{lmm:rm}, we have
$\expect{\exp\pth{64 n t^4 \fnorm{h_1}^2}} \lesssim 1$,
 provided that
\begin{equation}
n t^4\epsilon^4 m^2 = \frac{k^4 s^2}{m^2 n}\lesssim \frac{1}{m} \log \frac{\eexp p}{m}.
\label{eq:cov1}
\end{equation}

Next we bound $h_2$ and $h_3$, which have the same distribution.
Note that $h_2=\epsilon U_I (B-\epsilon \allones) U_I \tv \tv^\top$ and hence 
$\fnorm{h_2}^2 =\epsilon^2 \|U_I (B-\epsilon \allones) U_I \tv\|_2^2  \|\tv\|_2 =  \epsilon^2m  \| \mathbf{1}_I (B-\epsilon \allones) U_I \tv\|_2^2$.
Therefore
\begin{align}
\expect{\exp\pth{64 n t^4 \fnorm{h_2}^2}}
=  & ~ 	\expect{\exp\pth{64 n t^4 \epsilon^2m  \fnorm{ \mathbf{1}_I (B-\epsilon \allones) U_I \tv}^2  }} \nonumber \\
= & ~ 		\expect{\exp\pth{64 n t^4 \epsilon^2m  \fnorm{(B-\epsilon \allones)_{I,I\cap \tI} }^2  }}, \nonumber \\
= & ~ 		\expect{\expect{\exp\pth{\tau (S-\epsilon L)^2 }|L}^m}, \nonumber
%\label{eq:h2a} 
\end{align}
where $L \triangleq |I\cap \tI|$, $S\sim \Binom(L,\epsilon)$, and  
\[
\tau
\triangleq 64 n t^4 \epsilon^2m
%= 64 \frac{s^2 k^2}{n m}
= 64 \frac{k^2 s^2}{m n}.
 %= 64 \cdot \begin{cases}
%\frac{\log p}{n}  &  k \leq p^{1/3}\\
 %\frac{\sqrt{p/k^3} }{n}  &  k \geq p^{1/3}\\
%\end{cases}
\]
Assume that
\begin{equation}
\tau \lesssim \frac{1}{m}.
\label{eq:cov2}
\end{equation}
%under the basic assumption that $n \geq m$. 
%Recall that for any $\epsilon \leq \frac{1}{2}$, $\Bern(\epsilon)$ is subgaussian with constant at most $c(\log \frac{1}{\epsilon})^{-1}$ (cf.~e.g.~\cite{buldygin2013sub}). 
%Therefore $S-\epsilon L$ is subgaussian with constant at most $cL(\log \frac{1}{\epsilon})^{-1}$. 
%By the equivalent characterization of subgaussian random variables \cite[Proposition 2.5.2]{vershynin2016high}, we have
%\[
%\expect{\exp\pth{\tau (S-\epsilon L)^2 }|L}
%\leq 
%\exp\pth{c \tau^2 L\pth{\log \frac{1}{\epsilon}}^{-1}}
%\]
%provided that
%\[
%\tau^2 L \pth{\log \frac{1}{\epsilon}}^{-1} \leq c'.
%\]
Recall that for any $\epsilon$, $\Bern(\epsilon)$ is subgaussian with parameter at most a constant $c$. Therefore $S$ is subgaussian with parameter at most $cL$. 
By the equivalent characterization of subgaussian random variables \cite[Proposition 2.5.2]{vershynin2016high}, we have
\[
\expect{\exp\pth{\tau (S-\epsilon L)^2 }|L}
\leq 
\exp\pth{c \tau^2 L}
\]
provided that $\tau^2 L \leq c'$. 
Therefore
\begin{align}
\expect{\exp\pth{64 n t^4 \fnorm{h_2}^2}}
\leq  & ~ 	\expect{\exp\pth{c \tau^2 m L }} \nonumber \\ 
\stepa{\leq}  & ~ 	\pth{1+\frac{m}{p}(\exp\pth{c \tau^2 m  }-1)}^m \nonumber \\ 
\stepb{\leq}  & ~ 	\exp\pth{c\frac{\tau^2 m^3}{p}} \stepc{\lesssim} 1 \nonumber
\end{align}
where (a) follows from the fact that hypergeometric distribution is stochastically dominated by binomial in the convex ordering \cite{Hoeffding63};
(b) and (c) follow from \prettyref{eq:cov2} and hence $\tau^2 m \leq 1$ and $\frac{\tau^2 m^3}{p} \leq \tau^2m^2 \lesssim 1$.

Finally, we deal with $h_4$, which is the term that requires the extra condition \prettyref{eq:nassumption-extra} on the sample size.
	Note that 
	%$\fnorm{h_4} = \fnorm{(B_II-\epsilon \allones) U_I  \tU_\tI (\tB-\epsilon \allones) \tU_\tI  $
	$\rank(h_4) \leq |I \cap \tI|$ and that $\|h_4\|_2 \leq \norm{B_{II}-\epsilon \allones_{II}}_2  \|\tB_{\tI\tI}-\epsilon \allones_{\tI\tI}\|_2 $. 
	In view of the event $E^\star$ we have conditioned on, 
	Therefore 
		\[
		\fnorm{h_4}^2 \leq \norm{B_{II}-\epsilon \allones_{II}}_2^2  \|\tB_{\tI\tI}-\epsilon \allones_{\tI\tI}\|_2^2  |I \cap \tI| \leq c \rho^2 |I \cap \tI| ,
		\]
		where $\rho \triangleq k \vee \frac{\log m}{\log \frac{e \log m}{k}}$. Hence 
\begin{align}
\expect{\exp\pth{64 n t^4 \fnorm{h_4}^2}}
\leq & ~ \expect{\exp\pth{c n t^4 \rho^2 |I \cap \tI|}}	= \expect{\exp\pth{\frac{c \rho^2 s^2}{n} |I \cap \tI|}}	\nonumber \\
\leq & ~ \pth{1+\frac{m}{p}\pth{\exp\pth{\frac{c \rho^2 s^2}{n}}  -1}}^m \nonumber \\
%\overset{\prettyref{eq:cov2}}{\leq}
\leq  & ~ \exp\pth{\frac{c m^2 \rho^2 s^2}{np}  } \lesssim 1, \nonumber 
\end{align}
provided that 
\begin{equation}
\frac{\rho^2 s^2}{n}  \lesssim 1
\label{eq:cov3}
\end{equation}
and 
\begin{equation}
\frac{m^2 \rho^2 s^2}{np}  \lesssim 1.
\label{eq:cov4}
\end{equation}
%where the last step follows from the fact that $\frac{m^2}{nk^2} \lesssim 1$ since $n\geq m$ and $m\leq k^2$.
%Finally, the last display is also bounded provided that 
%\begin{equation}
%n \geq \frac{m^3}{k^2p}  
%= \begin{cases}
%\frac{k^6}{p}  &  k\leq p^{1/3}\\
%p & k\geq p^{1/3}\\
%\end{cases}
%\label{eq:nextra}
%\end{equation}

To finish the proof, we need to choose the parameters to ensure that 
that \prettyref{eq:Ams-s}, \prettyref{eq:Bms-s}, \prettyref{eq:cov1}--\prettyref{eq:cov4} hold simultaneously. 
%Choose the parameters $s$ and $m$ as
Let
%\[
%\begin{cases}
%%s = c \log p, \qquad m= k^2 p^{\delta/20},   &  \text{~~~if } k \leq p^{1/3-\delta} \\
%s = c \log p, \qquad m= k^2 \pth{\frac{p}{k^3}}^{1/20},   &  \text{~~~if } k \leq p^{1/3} \\
%s = \sqrt{\frac{cp}{k^3} \log \frac{ep}{k}}, \qquad m= \sqrt{\frac{cpk}{\log \frac{ep}{k}}},   &  \text{~~~if } k \geq p^{1/3} \\
%\end{cases}
%\]
\begin{align*}
s = & ~ \delta \log \frac{p}{c k^3}, \qquad m= k^2 \pth{\frac{p}{k^3}}^{\delta},   &  \text{if } k \leq p^{1/3} \\
s = & ~\sqrt{\frac{cp}{k^3} \log \frac{ep}{k}}, \qquad m= \sqrt{\frac{cpk}{\log \frac{ep}{k}}},   &  \text{if } k \geq p^{1/3} 
%= & ~  \\
%= & ~ 	\\
\end{align*}
so that \prettyref{eq:Ams-s} and \prettyref{eq:Bms-s} hold; here $\delta$ is any constant in $(0, \frac{2}{3}]$.
Moreover, the basic assumption on the sample size 
\begin{equation}
n \gtrsim k^2 s \log^2 p
\label{eq:nbasic}
\end{equation}
guarantees \prettyref{eq:cov1}, \prettyref{eq:cov2} and \prettyref{eq:cov3}.
Finally, the extra assumption on the sample size that 
\begin{equation}
n \gtrsim 
%\frac{m^3}{k^2p}  
\frac{m^2 k^2 s^2}{p} 
= \begin{cases}
\frac{k^6}{p}  \pth{\frac{p}{k^3}}^{2 \delta} \log^2 p &  k\leq p^{1/3}\\
p & k\geq p^{1/3}\\
\end{cases}
\label{eq:nextra}
\end{equation}
ensures \prettyref{eq:cov4} hold simultaneously. 
%guarantees that \prettyref{eq:cov1}, \prettyref{eq:cov2}, \prettyref{eq:cov4} hold simultaneously. 
The lower bound $k \sqrt{\frac{s}{n}}$  follows from \prettyref{lmm:sep} and \prettyref{lmm:chi2.cov}, completing the proof.

\section{Computational limits}
	\label{sec:computational}

In this section we address the computational aspects of detecting sparse matrices in both the Gaussian noise and the covariance model.

\paragraph{Gaussian noise model}
The computational hardness of the red region (reducibility from planted clique) in \prettyref{fig:phase} follows from that of submatrix detection in Gaussian noise \cite{BI12,MW13b}, which is a special case of the model considered here. 
	%\footnote{More precisely, the result in \cite{MW13b} deals a special case of the model \prettyref{eq:GLM}, where the entries of the mean matrix $M$ is zero except for a $k\times k$ submatrix with values at least $\theta$. Substituting $\lambda =\|M\|_2 = k\theta$ in \cite[Figure 1]{MW13b} leads to \prettyref{fig:phase21}.} 
	The statistical and computational boundary of submatrix detection is shown in \prettyref{fig:phase22}, in terms of the tradeoff between the sparsity $k=p^\alpha$ and the spectral norm of the signal $\lambda=p^{\beta}$.
	Below we explain how \prettyref{fig:phase22} follows from the results in \cite{MW13b}.

The setting in \cite{MW13b} also deals with the additive Gaussian noise model \prettyref{eq:GLM}, where, under the alternative, the entries of the mean matrix $M$ is at least $\theta$ on a $k\times k$ submatrix and zero elsewhere, with $k=p^{\alpha}$ and $\theta=p^{-\gamma}$. 
Since $\|M\|_2 \geq k\theta$, this instance is included in the alternative hypothesis in \prettyref{eq:htsparse} with $\lambda=p^{\beta}$ and $\beta =\alpha-\gamma$.
It is shown that (see \cite[Theorem 2 and Fig.~1]{MW13b})
detection is computationally at least as hard as solving the planted clique problem when $\gamma > 0\vee(2\alpha-1) $, i.e., $\beta < \alpha \wedge (1-\alpha)$. 
Note that this bound is not monotone in $\alpha$, which can be readily improved to $\beta < \alpha \wedge \frac{1}{2}$, corresponding to the computational limit in \prettyref{fig:phase22}. 
Similarly, detection is statistically impossible when $\gamma > \frac{\alpha}{2}\vee(2\alpha-1)$, i.e., $\beta < \frac{\alpha}{2} \wedge (1-\alpha)$. 
Taking the monotone upper envelope leads to $\beta < \frac{\alpha}{2} \wedge \frac{1}{3}$, yielding the statistical limit in \prettyref{fig:phase21}.
%when $ < \frac{\alpha}{2}\vee(2\alpha-1)$, detection is statistically possible but computationally at least as hard as solving the planted clique problem.
%$0\vee(2\alpha-1) < \gamma < \frac{\alpha}{2}\vee(2\alpha-1)$, detection is statistically possible but computational at least as hard as solving the planted clique problem.
Finally, 
%the statistical and computation boundary in 
\prettyref{fig:phase} can be obtained by 
superimposing the statistical-computational limits in \prettyref{fig:phase21} on top of the statistical limit obtained in the present paper as plotted in \prettyref{fig:phase22}.

\begin{figure}[ht]
	\centering
	\subfigure[Statistical boundary for detecting sparse matrices (this paper).]%
	{\label{fig:phase21} 
	%\begin{tikzpicture}[domain=0:1,xscale=5,yscale=5, thick]
%% vertical axis
%\draw[->] (0,0) node [below] {\scriptsize 0} -- (0,1/3) node[left] {\scriptsize $\frac{1}{3}$}  -- (0,1/2) node[left] {\scriptsize $\frac{1}{2}$}  -- (0,1) node[left] {\scriptsize 1} -- (0,1.1) node[above] {\scriptsize $\beta$};
%% horizontal axis
%\draw[->] (0,0) -- (1/3,0) node[below] {\scriptsize $\frac{1}{3}$} -- (1,0) node[below] {\scriptsize 1} -- (1.1,0) node[right] {\scriptsize $\alpha$};
%\filldraw[fill=black!50!white, draw=black] (0,0) -- (1/3,1/3) -- (1,1/2) -- (1,0) -- cycle; % impossible
%\filldraw[fill=green!30!white, draw=black] (0,0) -- (1,1) -- (0,1) -- cycle; % easy
%\filldraw[fill=red!30!white, draw=black] (1/3,1/3) -- (1,1/2) -- (1,1) -- cycle; % possible
%\node at (.8,.2) [left,rotate=0,align=center] {impossible};
%%\node at (0.6,0.7) [left,align=center] {thresholding works};
%\node at (0.2,0.7) [right,align=center] {easy};
%\draw (.8,.65) node [below,align=center] {PC hard};
%%\draw (.2,-.1) node [below,align=center] {\small statistically possible\\[0em] \small computationally intractable} edge[out=90,in=270,->] (.5,.15);
%\end{tikzpicture}
\begin{tikzpicture}[domain=0:1,xscale=4.5,yscale=4.5, thick]
\draw[->] (0,0) node [below] {\scriptsize 0} -- (0,1/3) node[left] {\scriptsize $\frac{1}{3}$}  -- (0,1.1) node[above] {\scriptsize $\beta$};
\draw[->] (0,0) -- (1/3,0) node[below] {\scriptsize $\frac{1}{3}$} -- (1,0) node[below] {\scriptsize 1} -- (1.1,0) node[right] {\scriptsize $\alpha$};
%\filldraw[fill=green!30!white, draw=black] (0,0) -- (1/2,1/2) -- (1,0)--(1,1)--(0,1) -- cycle; % easy
\filldraw[fill=black!50!white, draw=black] (0,0) -- (1/3,1/3) -- (1,1/2)--(1,0) -- cycle; % impossible
%\filldraw[draw=black] (0,0)--(2/3,1/3)--(1,1/3)--(1,1/2) -- (1/2,1/2) -- cycle; % PC hard
\draw (0,1) -- (1,1)--(1,0);
\node at (0.8,.1) [left,rotate=0,align=center] {impossible};
\node at (0.3,0.7) [right,align=center] {possible};
%\draw (.32,.28) node [below,align=center,rotate=30] {PC hard};
\end{tikzpicture}
	}
	\subfigure[Statistical-computational boundary for detecting submatrices  \cite{MW13b}.]%
	{\label{fig:phase22}
	\begin{tikzpicture}[domain=0:1,xscale=4.5,yscale=4.5, thick]
\draw[->] (0,0) node [below] {\scriptsize 0} -- (0,1/3) node[left] {\scriptsize $\frac{1}{3}$}-- (0,1/2) node[left] {\scriptsize $\frac{1}{2}$}  -- (0,1) node[left] {\scriptsize 1} -- (0,1.1) node[above] {\scriptsize $\beta$};
\draw[->] (0,0) -- (1/2,0) node[below] {\scriptsize $\frac{1}{2}$} -- (2/3,0) node[below] {\scriptsize $\frac{2}{3}$} -- (1,0) node[below] {\scriptsize 1} -- (1.1,0) node[right] {\scriptsize $\alpha$};
\filldraw[fill=green!30!white, draw=black] (0,0) -- (1/2,1/2) -- (1,0)--(1,1)--(0,1) -- cycle; % easy
\filldraw[fill=black!50!white, draw=black] (0,0) -- (2/3,1/3) -- (1,1/3)--(1,0) -- cycle; % impossible
\filldraw[fill=black!20!white, draw=black] (0,0)--(2/3,1/3)--(1,1/3)--(1,1/2) -- (1/2,1/2) -- cycle; % PC hard
\node at (0.8,.1) [left,rotate=0,align=center] {impossible};
\node at (0.5,0.8) [right,align=center] {easy};
\draw (.32,.28) node [below,align=center,rotate=30] {PC hard};
%\draw[dashed] (1,0)--(0,1);
\end{tikzpicture}
}
	\caption{Detection boundary for $k$-sparse matrices and $k\times k$ submatrices $M$ in noise, where $k=p^\alpha$ and $\|M\|_2=\lambda=p^\beta$.}%
\label{fig:phase2}%
\end{figure}
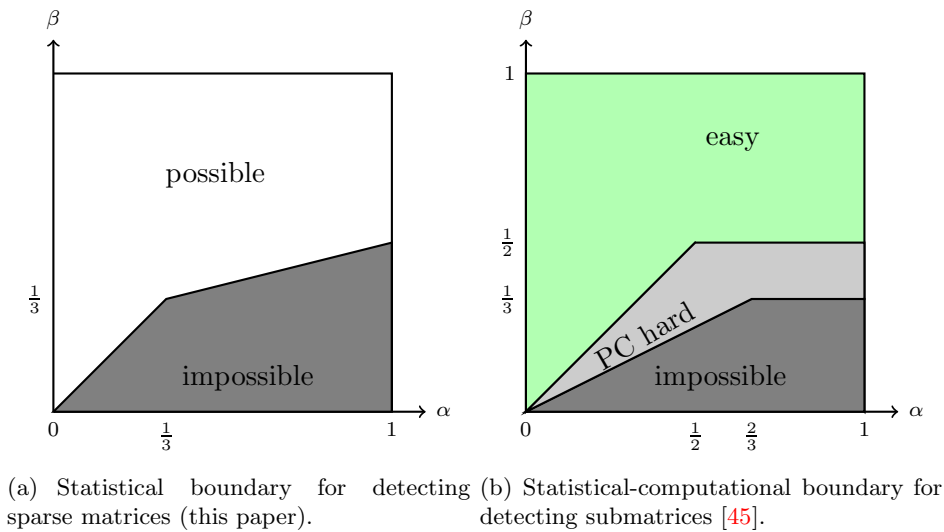

\paragraph{Sparse covariance model}
For the problem of detecting sparse covariance matrices, which is defined by the 4-tuple $(n,p,k,\lambda)$, the picture is less complete than the additive-noise counterpart; this is mainly due to the extra parameter $n$. Indeed, the statistical lower bound in \prettyref{thm:main-cov} holds under the extra assumptions \prettyref{eq:nassumption-basic} and \prettyref{eq:nassumption-extra} that the sample size is sufficiently large, while the current computational lower bound for sparse PCA in the literature \cite{BI12,wang2016statistical,gao2017sparse} also requires a number of conditions including the assumption of $n \leq p$.
Nevertheless, if we still let $k=p^{\alpha}$ and $\lambda \sqrt{n} = p^{\beta}$ and focus on the tradeoff between the $(\alpha,\beta)$ pair, 
the statistical and computational limits in \prettyref{fig:phase} continue to hold. Next we explain how to deduce the computational hardness of the red region 
 from that of sparse PCA in the spiked Gaussian covariance model \cite{gao2017sparse}.

To this end, due to monotonicity, it suffices to demonstrate a ``hard instance'', i.e., a sequence of triples $(n,\lambda,k)$ indexed by $p$, for every $(\alpha,\beta)$ such that $\frac{1}{3}<\alpha<\frac{1}{2}$ and $\beta<1$.
Given samples $X_1,\ldots,X_n \iiddistr N(0,\Sigma)$, the computational aspect of testing 
\begin{equation}
H_0: \Sigma=\identity, \quad \text{versus} \quad H_1: \Sigma=\identity+\lambda uu^\top,
\label{eq:spca}
\end{equation}
 where the eigenvector $u$ is both $k$-sparse and unit-norm, has been studied in \cite{gao2017sparse}. 
Fix $\alpha \in (\frac{1}{3},\frac{1}{2})$. 
Let $n=p^{\eta}$, $k=p^\alpha$ and $\lambda = \frac{c k^2}{n \log^2 n}$, so that $\beta=2\alpha-\eta$, and let $\frac{1}{a}\leq \eta \leq 1$ to be chosen later; here $a>1$ and $c>0$ are absolute constants from \cite[Theorem 5.4]{gao2017sparse}.
By assumption, $(2\alpha,4\alpha) \cap (\frac{1}{a},1) \neq \emptyset$; pick any $\eta$ therein. 
Then we have $\lambda \ll 1$ and \prettyref{eq:spca} is indeed an instance of \prettyref{eq:htsparse1}. 
By the choice of the parameters, the conditions of \cite[Theorem 5.4]{gao2017sparse} are fulfilled, namely, $\beta<\alpha$ and $\alpha > \frac{\eta}{4}$, and the detection problem \prettyref{eq:spca} and hence \prettyref{eq:htsparse1} are at least as hard as the planted clique problem.

%\prettyref{thm:main-cov} implies that 
%the statistical limits in \prettyref{fig:phase} continue to hold provided the sample size is sufficiently large as in \prettyref{eq:nassumption-basic} and \prettyref{eq:nassumption-extra}.

\section{Discussions}
	\label{sec:discuss}
	
In this paper, we studied the fundamental limits for sparse matrix detection from both the statistical and computational perspectives, where the alternative hypothesis is defined in terms of the spectral norm. The sparse matrices considered here have no apparent combinatorial structure and the corresponding estimation problem has no computational issue at all, but the detection problem has a surprising computational barrier when the sparsity level exceeds the cubic root of the matrix size. In this section we discuss two related problems, one is the detection problem when the alternative hypothesis is defined in terms of the Frobenius norm and another is the localization and estimation of a sparse matrix.

	\subsection{Alternative hypothesis defined by the Frobenius norm}
	\label{sec:altf}

As opposed to the alternative hypothesis in \prettyref{eq:htsparse} for $k$-sparse matrices defined by the spectral norm, one can consider the detection problem with the alternative hypothesis defined in terms of the Frobenius norm:
	\begin{equation}
	\begin{cases}
	H_0: & M = 0 \\
%	H_1: & M \in \{0,1\}^{p\times p}, M \in \Theta_{p,k}, \|M\|_2 \geq \alpha k.
H_1: & \fnorm{M} \geq \lambda, M\in \Theta(p,k,\lambda).
%\text{ $M$ is $k$-sparse}
	\end{cases}
	\label{eq:htsparsef}
\end{equation}
It turns out that in this case the sparsity \emph{plays no role} in improving the detection boundary, in the sense that the optimal separation scales as $\lambda^*(k,p) \asymp \sqrt{p}$ for all $k \geq 1$. 

The intuition behind this result is the well-known fact that in the Gaussian sequence model, the sparsity of the signal does not help in the so-called ``dense regime'' when the sparsity level exceeds the square-root of the dimension \cite{Ingster97,DJ.HC}. Here for $k$-sparse $p\times p$ matrices in the sense of \prettyref{def:sparsemat}, the number of nonzeros can be as large as $kp$ (e.g., block diagonal consisting of $p/k$ number of $k\times k$ blocks), which, since the ambient dimension is $p^2$, lies in the dense regime. This result can be proved rigorously as follows.

By the classical result of detection in the Gaussian sequence model (cf.~e.g.~\cite[Sec.~3.3.6]{IS03}), without sparsity, the optimal $\lambda$ for \prettyref{eq:htsparsef} is $\Theta(p)$, achieved by the $\chi^2$-test, namely, thresholding on $\fnorm{X}$.
Next we show that this is optimal even when $k=1$.
%he optimal $\lambda$ for $k=1$ is within a constant factor of that for $k=p$. 
To see this, consider the prior where $M$ is a random permutation matrix, which is $1$-sparse by definition and $\fnorm{M}=p$ with probability one. By \prettyref{lmm:chi2}, the $\chi^2$-divergence between the null and the alternative is
\begin{align}
\chi^2(P_{X|H_0}\,\|\,P_{X|H_1})+1
= & ~ \expect{\exp\pth{\Iprod{M}{\tilde{M}}}} = \expect{\exp(S_p)}
\end{align}
where $S_p$ is the number of fixed points of a uniform random permutation over $p$ elements. Furthermore, 
it is well-known that (cf.~\cite[Section IV.4]{feller1})
$S_p$ converges in distribution to Poisson$(1)$ as $p\diverge$ and, furthermore, 
	$\prob{S_n = \ell} = \frac{1}{\ell!}\sum_{m=0}^{n-\ell} \frac{(-1)^m}{m!} \leq \frac{2}{\ell!}$ for any $\ell \geq 0$,
which is faster than any exponential tail. Therefore,
	by \cite[Theorem 1]{mgf.conv}, the moment generating function of $S_n$ converges to that of Poisson$(1)$ pointwise. In particular, $\expect{\exp(S_p)} \to e^{e-1}$ as $p\to\infty$. Hence the probability of error for testing is non-vanishing in view of \prettyref{eq:chi2tv}.

	\subsection{Localization and denoising}
	\label{sec:discuss2}
	
A problem that is closely related to detecting the presence of a sparse matrix is localization. That is, the goal is to identify the support of the mean or covariance matrix with high probability. Under the row/column-wise sparsity assumption, if we measure the signal strength by the minimum non-zero entrywise magnitude, then it is easy to show that entrywise thresholding attains the minimax rate and there is no computational issue. In contrast, in the submatrix model, achieving the optimal rate for localization is again computationally difficult as shown in \cite{Cai2017Submatrix} and \cite{HajekWuXu14} in the context of Gaussian noise model and the community detection model, respectively.

Denoising high-dimensional matrices with submatrix sparsity was studied in \cite{MW13}, where the goal is to estimate the mean matrix $M$ based on the noisy observation in \prettyref{eq:GLM}. It turns out the computational difficulty of attaining the optimal rates crucially depends on the loss function \cite[Section 5.2]{MW13b}. 
For instance, for Frobenius norm loss entrywise thresholding is rate-optimal, while achieving the optimal rate for the spectral norm loss is no easier than planted clique whenever $k=p^\alpha$ for any fixed $0<\alpha<1$. 
In contrast, as mentioned earlier, for the sparsity model studied in this paper, entrywise thresholding achieves the minimax rate simultaneously for both the Frobenius norm and the spectral norm losses \cite{CZ12}.

%Another possible point for discussion is localization, i.e., identifying the support of the mean matrix M or the covariance matrix \Sigma. In this case, the right characterization of the difficulty of the problem seems to be the minimum entrywise signal strength and the spectral norm is not useful. The approach is entrywise thresholding and there is no computational issue. This by itself is not interesting, but the contrast with the submatrix localization can be interesting.

%\bibliographystyle{abbrv}
%\bibliography{IEEEfull,strings,refs,graphical_combined}

\end{document}